\documentclass[aos,preprint]{imsart}

\RequirePackage[OT1]{fontenc}
\RequirePackage{amsthm,amsmath}
\RequirePackage[authoryear]{natbib}
\usepackage[colorlinks,citecolor=blue,urlcolor=blue]{hyperref}

\usepackage{graphicx}
\usepackage{xargs}[2008/03/08]
\usepackage{mathrsfs,amsbsy,amssymb,amsfonts}
\usepackage{multirow}
\usepackage{enumitem}
\usepackage{MnSymbol}
\usepackage{epstopdf}

\usepackage{stackrel}

\pubyear{2019}
\volume{47}
\issue{6}
\firstpage{3533}
\lastpage{3577}
\arxiv{arxiv:1812.01831}
\doi{10.1214/18-AOS1787}



\makeatletter
\newtheorem{thm}{Theorem}
\newtheorem{prop}[thm]{Proposition}

\newtheorem{lem}[thm]{Lemma}

\newcommand{\linlabel}[2]{#2\def\@currentlabel{#2}\label{#1}}
\newcommand{\mylabel}[2]{#2\def\@currentlabel{#2}\label{#1}}
\newcommand{\vertiii}[1]{{\left\vert\kern-0.25ex\left\vert\kern-0.25ex\left\vert #1 
		\right\vert\kern-0.25ex\right\vert\kern-0.25ex\right\vert}}
	
\newcommand{\lincomment}[1]{{#1}}

\DeclareFontFamily{OMX}{MnSymbolE}{}
\DeclareSymbolFont{MnLargeSymbols}{OMX}{MnSymbolE}{m}{n}
\SetSymbolFont{MnLargeSymbols}{bold}{OMX}{MnSymbolE}{b}{n}
\DeclareFontShape{OMX}{MnSymbolE}{m}{n}{
	<-6>  MnSymbolE5
	<6-7>  MnSymbolE6
	<7-8>  MnSymbolE7
	<8-9>  MnSymbolE8
	<9-10> MnSymbolE9
	<10-12> MnSymbolE10
	<12->   MnSymbolE12
}{}
\DeclareFontShape{OMX}{MnSymbolE}{b}{n}{
	<-6>  MnSymbolE-Bold5
	<6-7>  MnSymbolE-Bold6
	<7-8>  MnSymbolE-Bold7
	<8-9>  MnSymbolE-Bold8
	<9-10> MnSymbolE-Bold9
	<10-12> MnSymbolE-Bold10
	<12->   MnSymbolE-Bold12
}{}

\let\llangle\@undefined
\let\rrangle\@undefined
\DeclareMathDelimiter{\llangle}{\mathopen}%
{MnLargeSymbols}{'164}{MnLargeSymbols}{'164}
\DeclareMathDelimiter{\rrangle}{\mathclose}%
{MnLargeSymbols}{'171}{MnLargeSymbols}{'171}
\makeatother

%
\global\long\def\expect{\mathbb{E}}
\global\long\def\prob{\mathrm{Pr}}

\global\long\def\real{\mathbb{R}}
\global\long\def\Op{O_{P}}
\global\long\def\manifold{\mathcal{M}}

\global\long\def\op{o_{P}}
\global\long\def\oas{o_{a.s.}}

\global\long\def\covarop{\mathcal{C}}

\global\long\def\asympeq{\asymp}

\global\long\def\diffop{\mathrm{d}}
\newcommandx\tangentspace[2][usedefault, addprefix=\global, 1=\manifold]{T_{#2}#1}

\global\long\def\dim{d}

\global\long\def\vec#1{\mathbf{#1}}
\newcommandx\lpnorm[3][usedefault, addprefix=\global, 1=r, 2=]{\|#3\|_{\mathcal{L}^{#1}}^{#2}}
\newcommandx\lp[1][usedefault, addprefix=\global, 1=p]{\mathcal{L}^{#1}}

\global\long\def\ltwo{\mathcal{L}^{2}}

\global\long\def\innerprod#1#2{\langle#1,#2\rangle}
\global\long\def\metric#1#2{\langle#1,#2\rangle}
\global\long\def\Log{\mathrm{Log}}
\global\long\def\Exp{\mathrm{Exp}}
\newcommandx\vfnorm[3][usedefault, addprefix=\global, 1=\mu, 2=]{\|#3\|_{#1}^{#2}}
\newcommandx\vfinnerprod[2][usedefault, addprefix=\global, 1=\mu]{\llangle#2\rrangle_{#1}}
\global\long\def\define{:=}

\global\long\def\tdomain{\mathcal{T}}
\global\long\def\tm{T\manifold}

\newcommandx\opnorm[3][usedefault, addprefix=\global, 1=\mu, 2=]{\vertiii{#3}_{#1}^{#2}}
\newcommandx\fronorm[2][usedefault, addprefix=\global, 1=]{|#2|_{F}^{#1}}
\global\long\def\spd{\mathrm{Sym_{\star}^{+}}(m)}
\global\long\def\borel{\mathscr{B}}
\global\long\def\usphere{\mathbb{S}^{2}}
\global\long\def\usphered{\mathbb{S}^{d}}

\global\long\def\sym{\mathrm{Sym}(m)}
\def\rev#1{#1}
\def\underset#1#2{\stackbin[#1]{}{#2}}
\def\overset#1#2{\stackbin[]{#1}{#2}}

\bibliographystyle{imsart-nameyear}
\def\references{\bibliography{fda,manifold,misc}}

\makeatother

\begin{document}

\begin{frontmatter}
\title{Intrinsic Riemannian Functional Data Analysis}
\runtitle{{i}RFDA}

\begin{aug}
\author{\fnms{Zhenhua} \snm{Lin}\thanksref{m1}\ead[label=e1]{stalz@nus.edu.sg}}
\and
\author{\fnms{Fang} \snm{Yao}\thanksref{t2,m2}\ead[label=e2]{fyao@math.pku.edu.cn}}

\thankstext{t2}{Fang Yao's research is partially supported by National Natural Science Foundation of China Grant
	11871080, a Discipline Construction Fund at Peking University and Key Laboratory of Mathematical
	Economics and Quantitative Finance (Peking University), Ministry of Education. Data were provided
	by the Human Connectome Project, WU-Minn Consortium (Principal Investigators: David Van Essen
	and Kamil Ugurbil; 1U54MH091657) funded by the 16 NIH Institutes and Centers that support the
	NIH Blueprint for Neuroscience Research, and by the McDonnell Center for Systems Neuroscience
	at Washington University.}

\runauthor{Z. Lin and F. Yao}

\affiliation{National University of Singapore\thanksmark{m1} and Peking University\thanksmark{m2}}
\address[A]{Z. Lin
	\\ Department of Statistics and Applied Probability\\ National University of Singapore \\ Singapore 117546\\
	\printead{e1}}
\address[C]{F. Yao
	\\ Corresponding author \\ Department of Probability and Statistics \\ Center for Statistical Science \\ Peking University, Beijing, China\\
	\printead{e2}}

\end{aug}

\received{\smonth{10} \syear{2017}}
\revised{\smonth{10} \syear{2018}}

%
\begin{abstract}
	In this work we develop a novel and foundational framework for analyzing 
	general Riemannian functional data, in particular a new development of tensor Hilbert spaces along curves on a manifold. {Such spaces enable us to derive} Karhunen--Lo\`{e}ve expansion for Riemannian random processes. This framework also features {an approach to compare} objects from different tensor Hilbert spaces, which paves the way
	for asymptotic analysis in Riemannian functional data analysis. Built upon  intrinsic
	geometric concepts such as vector field, Levi--Civita connection and
	parallel transport on Riemannian manifolds, the developed framework
	{applies to not only Euclidean submanifolds but also manifolds without a natural ambient space}. {As applications of this framework, we develop intrinsic Riemannian
		functional principal component analysis (iRFPCA) and intrinsic Riemannian
		functional linear regression (iRFLR) that are distinct from their traditional and ambient counterparts}.  We also provide  estimation procedures for iRFPCA and iRFLR,  and investigate their asymptotic properties within the intrinsic geometry. Numerical performance is illustrated by simulated and real examples.
\end{abstract}

%
\begin{keyword}[class=AMS]
	\kwd{62G05}
	\kwd{62J05}
\end{keyword}

\begin{keyword}
	\kwd{Functional principal component}
	\kwd{functional linear regression}
	\kwd{intrinsic Riemannian Karhunen--Lo\`eve expansion}
	\kwd{parallel transport}
	\kwd{tensor Hilbert space.} 
\end{keyword}

\end{frontmatter}

\section{Introduction\label{sec:iRFDA-Introduction}}
\rev{Functional data analysis (FDA) advances substantially} in the past two decades, as the  rapid
development of modern technology enables collecting more and more
data continuously over time. There is rich literature spanning more
than seventy years on this topic, including development on functional
principal component analysis such as \citet{Rao1958,Kleffe1973,Dauxois1982,Silverman1996,Yao2005a,Hall2006,Zhang2016},
and advances on functional linear regression such as \citet{Yao2005b,Hall2007c,Yuan2010,Kong2016},
among many others. For a thorough review of the topic, we refer readers to the review article 
\citet{Wang2016} and monographs \citet{Ramsay2005,Ferraty2006,Hsing2015,Kokoszka2017}
for comprehensive treatments on classic functional data analysis. 
Although traditionally functional data take values in a vector space, more data of nonlinear  structure arise and should be properly handled in a nonlinear space. For instance, trajectories of bird migration are  naturally regarded as curves on a sphere which is a nonlinear Riemannian manifold, rather than the three-dimensional vector space $\real^3$. Another \mbox{example} is the dynamics of brain  functional connectivity. The functional connectivity at a time point is represented by a symmetric positive-definite matrix (SPD). Then the dynamics shall be modelled as a curve in the space of SPDs that is endowed with either the affine-invariant metric \citep{Moakher2005} or the Log-Euclidean metric \citep{Arsigny2007} to avoid the ``swelling'' effect \citep{Arsigny2007}. Both metrics turn SPD into a nonlinear Riemannian manifold.  In this paper, we refer this type of functional data as  \emph{Riemannian functional data}, which are functions
taking values on a Riemannian manifold and modelled by \emph{Riemannian random processes}, that is, we treat Riemannian
trajectories as realizations of a Riemannian random process.

Analysis of Riemannian functional data is not only challenged by the infinite dimensionality and compactness of covariance operator from functional data, but also obstructed by the \emph{nonlinearity} of the range of functions, since manifolds are generally
not vector spaces and render many techniques relying on linear
structure ineffective \rev{or  inapplicable. For instance},
if the sample mean curve is computed for bird migration trajectories as if they were sampled from the ambient space $\real^{3}$,
this na\"ive sample mean in general does not fall on the sphere of earth. 
\rev{For manifolds of tree-structured data studied in \cite{Wang2007}, as they are naturally not Euclidean submanifolds which refer to Riemannian submanifolds of a Euclidean space in this paper, the na\"ive sample mean can not even be defined}
from ambient spaces, and thus a proper treatment of
manifold structure is necessary. While the literature for Euclidean functional data is abundant, works
involving nonlinear manifold structure are scarce. 
\citet{Chen2012}
and \citet{Lin2017a} respectively investigate representation and
regression for functional data living in a low-dimensional nonlinear  
manifold that is embedded in an infinite-dimensional space, while
\citet{Lila2017} focuses principal component analysis on functional
data whose domain is a two-dimensional manifold. None of these deal with functional data that take values on a nonlinear manifold, while \citet{Dai2017} is the only endeavor in this direction for Euclidean submanifolds.

As functional principal component analysis (FPCA) is an essential
tool for FDA, it is of im-portance and
interest to develop this notion for Riemannian functional data.
Since manifolds are in general not vector spaces, classic
covariance functions/operators do not exist naturally for a Riemannian
random process.
A strategy that is often adopted, for example, \cite{Shi2009} and \citet{Cornea2017},
to overcome the lack of vectorial structure is to \rev{map data on the manifold into tangent spaces via Riemannian logarithm map defined in Section \ref{subsec:Random-Elements-on-THS}.} As tangent spaces at different points are different vector
spaces, in order to handle observations from different tangent spaces,
some existing works assume
a Euclidean ambient space for the manifold and identify tangent vectors as Euclidean vectors. \rev{This strategy is adopted by \citet{Dai2017} on Riemannian functional data such as compositional data modelled on the unit sphere for the first time. Specifically, they assume that functional data are sampled from a time-varying geodesic submanifold, where at a given time point, the functions take values on a geodesic submanifold of a common manifold. Such a common manifold is further assumed to be a Euclidean submanifold that allows to identify all tangent spaces as hyperplanes in a common Euclidean space (endowed with the usual Euclidean inner product). Then, with the aid of Riemannian logarithm map, \citet{Dai2017} are able to transform Riemannian functional data into Euclidean ones while accounting for the intrinsic curvature of the underlying manifold.}  

\rev{To avoid confusion, we distinguish two different perspectives to deal with Riemannian manifolds. One is to work with the manifold under consideration without assuming an ambient space surrounding it or an isometric embedding into a Euclidean space. This perspective is regarded as \emph{completely intrinsic}, or simply \emph{intrinsic}. Although generally difficult to work with, it can fully respect all geometric structure of the manifold. The other one, referred to as \emph{ambient} here, assumes that the manifold under consideration is isometrically embedded in a Euclidean ambient space, so that geometric objects such as tangent vectors can be processed within the ambient space. For example, from this point of view, the local polynomial regression for SPD proposed by \cite{Yuan2012} is intrinsic, while  the aforementioned work by \citet{Dai2017} takes the ambient perspective.}

\rev{Although it is possible to account for some of geometric structure in the ambient perspective, for example, the curved nature of manifold via Riemannian logarithm map, several issues arise due to manipulation of geometric objects such as tangent vectors in the ambient space.} First, the essential dependence on an ambient space restricts
potential applications. It is not
immediately applicable to manifolds that are not a Euclidean submanifold or do not have a natural isometric embedding into a Euclidean
space, for example, the Riemannian manifold of $p\times p$ ($p\geq 2$) SPD matrices \rev{endowed with the affine-invariant metric \citep{Moakher2005} which is not compatible with the $p(p+1)/2$-dimensional Euclidean metric}. 
Second, although an ambient space provides
a common stage for tangent vectors at different points, operation
on tangent vectors from this ambient perspective can potentially violate the
intrinsic geometry of the manifold. To illustrate this, consider
comparison of two tangent vectors at different points (this comparison
is needed in the asymptotic analysis of Section \ref{subsec:Asymptotics-fpca}; see also Section \ref{subsec:euclidean-submanifold}).
From the ambient perspective, taking the difference
of tangent vectors requires moving a tangent vector parallelly
\emph{within the ambient space} to the base point of the other tangent
vector. However, the resultant tangent vector after movement in the ambient space is
generally not a tangent vector for the base point of the other tangent
vector; see the left panel of Figure \ref{fig:parallel-transport}
for a geometric illustration. In another word, the ambient difference of
two tangent vectors at different points is not an intrinsic geometric
object on the manifold, and the departure from intrinsic geometry can
potentially affect the statistical efficacy and/or efficiency. Lastly, since manifolds
might be embedded into more than one ambient space, the interpretation
of statistical results  crucially
depends on the ambient space and could be misleading if one does not choose the ambient space appropriately.

\begin{figure}[t]
	\begin{minipage}[t]{0.49\columnwidth}%
		\begin{center}
			\includegraphics[scale=0.65]{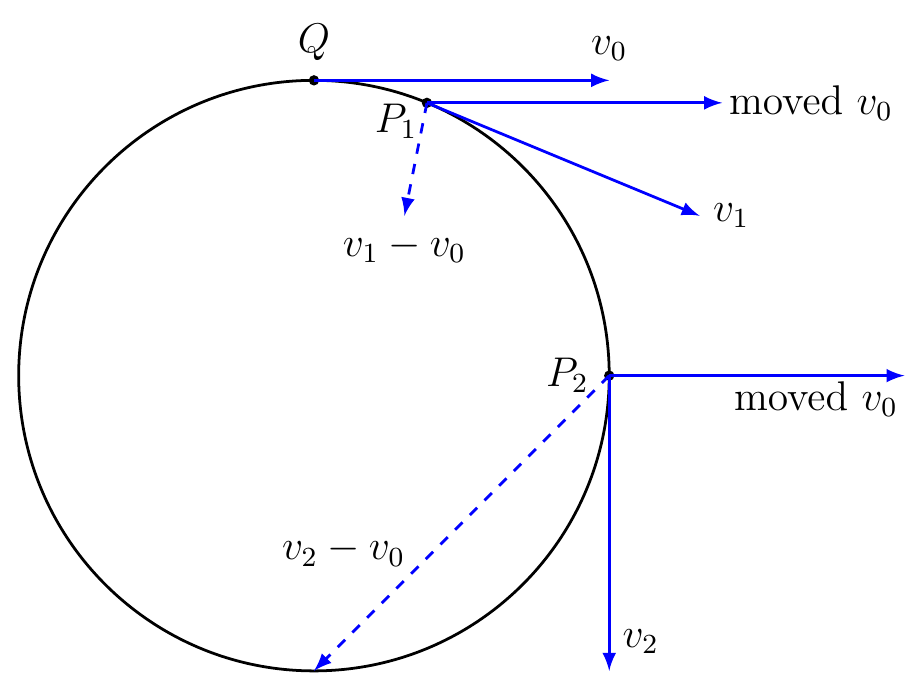}
			\par\end{center}%
	\end{minipage}%
	\begin{minipage}[t]{0.49\columnwidth}%
		\begin{center}
			\includegraphics[scale=0.65]{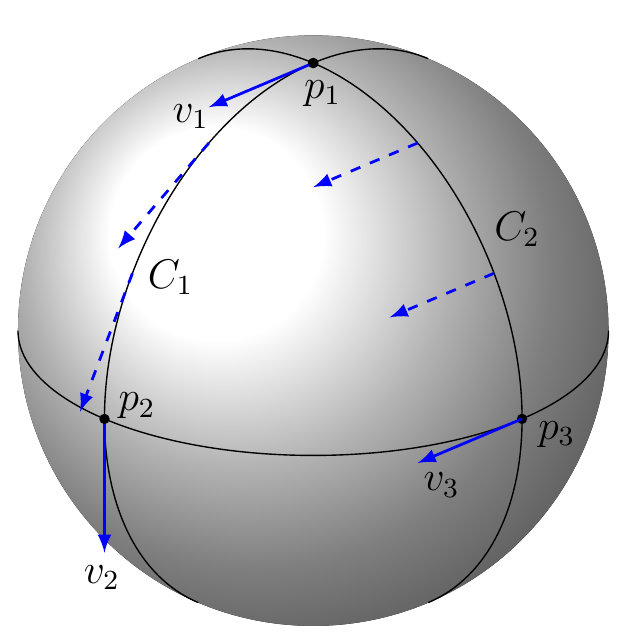}
			\par\end{center}%
	\end{minipage}
	
	\caption{Left panel: illustration of ambient movement of tangent vectors. The
		tangent vector $v_{0}$ at the point $Q$ of a unit circle embedded
		in a Euclidean plane is moved to the point $P_1$ and $P_2$ within the ambient
		space. $v_{1}$ (resp. $v_2$) is a tangent vector at $P_1$ (resp. $P_2$). The differences $v_{1}-v_{0}$ and $v_2-v_0$
		are not tangent to the circle at $P_1$ and $P_2$, respectively. If $v_0$, $v_1$ and $v_2$ have the same length, then the intrinsic parallel transport of $v_0$ to $P_k$ shall coincide with $v_k$, and $\mathcal{P}v_0-v_k=0$, where $k=1,2$ and $\mathcal{P}$ represents the parallel transport on the unit circle with the canonical metric tensor. Thus, $\|\mathcal{P}v_0-v_k\|_{\real^2}=0$. However, $\|v_0-v_k\|_{\real^2}>0$, and this nonzero value completely results from the departure of the Euclidean geometry from the unit circle geometry. The ambient discrepancy $\|v_0-v_1\|_{\real^2}$ is small as $P_1$ is close to $P$, while  $\|v_0-v_2\|_{\real^2}$ is large since  $P_2$  is far away from $Q$. Right panel: illustration of
		parallel transport. A tangent vector $v_{1}$ at the point $p_{1}$
		on the unit sphere is parallelly transported to the point $p_{2}$
		and $p_{3}$ along curves $C_{1}$ and $C_{2}$, respectively. During
		parallel transportation, the transported tangent vector always stays
		within the tangent spaces along the curve. }
	\label{fig:parallel-transport}
\end{figure}

In the paper, we develop a completely intrinsic framework that provides a 
foundational theory for general Riemannian functional data that paves the way for the development of 
intrinsic Riemannian functional principal component analysis and intrinsic
Riemannian functional linear regression, among other potential
applications. The key building block is a new concept
of \emph{tensor Hilbert space} along a curve on the manifold, which
is described in Section \ref{sec:Tensor-Hilbert-Space}. 
On one hand, our 
approach experiences dramatically elevated technical challenges relative
to the ambient counterparts. For example, without an ambient space,
it is nontrivial to perceive and handle tangent vectors. On the
other hand, the advantages of the intrinsic perspective are at least
three-fold, in contrast to ambient approaches. First, our results immediately apply to many important
Riemannian manifolds that are not naturally a Euclidean submanifold but commonly seen in statistical analysis and machine
learning, such as the aforementioned  SPD manifolds  
and Grassmannian manifolds. Second, our
framework features a novel intrinsic proposal for coherent comparison
of objects from different tensor Hilbert spaces on the manifold, and
hence makes the asymptotic analysis sensible. Third,
results produced by our approach are invariant to embeddings and ambient
spaces, and can be interpreted independently, which avoids potential misleading interpretation in practice.

As important applications of the proposed framework, we develop intrinsic Riemannian functional principal component
analysis (iRFPCA) and intrinsic Riemannian functional linear regression
(iRFLR). Specifically, estimation procedures of intrinsic 
eigenstructure are provided and their asymptotic properties are investigated within the intrinsic geometry. For iRFLR, we study a Riemannian functional linear
regression model, where a scalar response intrinsically and linearly
depends on a Riemannian functional predictor through a \emph{Riemannian
	slope function}, a concept that is formulated in Section \ref{sec:Intrinsic-Regression},
along  with the concept of linearity in the context of Riemannian
functional data. We present an FPCA-based estimator and a Tikhonov estimator
for the Riemannian slope function and explore their asymptotic properties, where
the proposed framework of tensor Hilbert space again plays an essential role.

The rest of the paper is structured as follows. The 
foundational framework for intrinsic Riemannian functional data analysis is laid
in Section \ref{sec:Tensor-Hilbert-Space}. Intrinsic Riemannian functional
principal component analysis is presented in Section \ref{sec:Intrinsic-Representation},
while intrinsic Riemannian functional regression is studied in Section
\ref{sec:Intrinsic-Regression}. In Section \ref{sec:Numerical-Evidence}, numerical performance is illustrated through simulations, and an application to Human Connectome Project analyzing functional connectivity and behavioral data is provided.

\section{Tensor Hilbert space and Riemannian random process\label{sec:Tensor-Hilbert-Space}}

In this section, we first define the concept of tensor Hilbert space
and discuss its properties, including a mechanism to deal with two different
tensor Hilbert spaces at the same time. Then, random elements on tensor
Hilbert space are investigated, with the proposed  intrinsic Karhunen--Lo\`{e}ve expansion
for the random \mbox{elements}. Finally, practical computation with respect
to an orthonormal frame is given. Throughout this section, we assume
a $d$-dimensional, connected and geodesically complete Riemannian
manifold $\manifold$ equipped with a Riemannian metric $\innerprod{\cdot}{\cdot}$,
which defines a scalar product $\innerprod{\cdot}{\cdot}_{p}$ for
the tangent space $\tangentspace p$ at each point $p\in\manifold$.
This metric also induces a distance function $d_{\manifold}$ on $\manifold$.
A \mbox{preliminary} for Riemannian manifolds can be found in the appendix.
For a comprehensive treatment on Riemannian manifolds, we recommend
the introductory text by \citet{Lee1997} and also \citet{Lang1995}.

\subsection{Tensor Hilbert spaces along curves\label{subsec:Tensor-Hilbert-Spaces-along-cruves}}

Let $\mu$ be a measurable curve on a manifold  $\manifold$ and parameterized by a compact domain
$\tdomain\subset\real$ equipped with a finite measure $\upsilon$. A vector field
$V$ along $\mu$ is a map from $\tdomain$ to the tangent bundle
$T\manifold$ such that $V(t)\in\tangentspace{\mu(t)}$ for all $t\in\tdomain$.
It is seen that the collection of vector fields $V$ along $\mu$
is a vector space, where the vector addition between two vector fields $V_1$ and $V_2$ is a vector field $U$ such that $U(t)=V_1(t)+V_2(t)$ for all $t\in\tdomain$, and the scalar multiplication between a real number $a$ and a vector field $V$ is a vector field $U$ such that $U(t)=aV(t)$ for all $t\in\tdomain$. Let $\mathscr{T}(\mu)$ be the collection of (equivalence
classes of) measurable vector fields $V$ along $\mu$ such that
$\vfnorm V\define\{\int_{\tdomain}\metric{V(t)}{V(t)}_{\mu(t)}\diffop\upsilon(t)\}^{1/2}<\infty$
with identification between $V$ and $U$ in $\mathscr{T}(\mu)$ (or equivalently, $V$ and $U$ are in the same equivalence class) 
when $\upsilon(\{t\in\tdomain:V(t)\neq U(t)\})=0$. Then $\mathscr{T}(\mu)$ is turned
into an inner product space by the inner product $\vfinnerprod[\mu]{V,U}\define\int_{\tdomain}\metric{V(t)}{U(t)}_{\mu(t)}\diffop\upsilon(t)$,
with the induced norm given by $\vfnorm[\mu]{\cdot}$. Moreover, we
have that
\begin{thm}
	\label{prop:hilbertian-vfc}For a measurable curve $\mu$ on $\manifold$, $\mathscr{T}(\mu)$ is a separable Hilbert
	space.
\end{thm}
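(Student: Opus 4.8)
The plan is to reduce the statement to the classical fact that the space $\ltwo(\tdomain,\upsilon;\real^{d})$ of square-integrable $\real^{d}$-valued functions is a separable Hilbert space, by trivializing the tangent bundle along $\mu$ by means of a \emph{measurable orthonormal frame}: a family $E_{1},\dots,E_{d}$ of measurable vector fields along $\mu$ such that $\{E_{i}(t)\}_{i=1}^{d}$ is an orthonormal basis of $\tangentspace{\mu(t)}$ for $\upsilon$-almost every $t$. Granting such a frame, I would send each $V\in\mathscr{T}(\mu)$ to its coordinate functions $v_{i}(t)\define\metric{V(t)}{E_{i}(t)}_{\mu(t)}$ and argue that $V\mapsto(v_{1},\dots,v_{d})$ is a linear isometric bijection onto $\ltwo(\tdomain,\upsilon;\real^{d})$; completeness and separability of $\mathscr{T}(\mu)$ then transport back along this isometry.

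The main obstacle is constructing the frame, because $\mu$ is merely measurable and carries no differentiable structure along which one could, say, parallel transport a basis. Here I would use the second countability of $\manifold$: fix a countable atlas $\{(U_{\alpha},\phi_{\alpha})\}_{\alpha\in\mathbb{N}}$, each chart supplying its smooth coordinate frame $\partial_{1},\dots,\partial_{d}$. Since $\mu$ is measurable and each $U_{\alpha}$ is open, the preimages $\mu^{-1}(U_{\alpha})$ are measurable and cover $\tdomain$, and disjointifying them into $A_{\alpha}\define\mu^{-1}(U_{\alpha})\setminus\bigcup_{\beta<\alpha}\mu^{-1}(U_{\beta})$ yields a countable measurable partition of $\tdomain$ on each piece of which $t\mapsto(\partial_{i})_{\mu(t)}$ is a measurable frame. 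Patching over the partition produces a global measurable (but not orthonormal) frame $F_{1},\dots,F_{d}$, to which I would apply the Gram--Schmidt procedure fiberwise. The key verification is that Gram--Schmidt preserves measurability: it is built from the quantities $\metric{F_{i}(t)}{F_{j}(t)}_{\mu(t)}$, together with norms and vector operations, and each such inner product is measurable in $t$ because in local coordinates it equals $g_{ij}(\mu(t))$ for the smooth metric coefficients $g_{ij}$, a composition of a continuous map with the measurable $\mu$.

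With the frame at hand, every $V\in\mathscr{T}(\mu)$ admits the fiberwise expansion $V(t)=\sum_{i=1}^{d}v_{i}(t)E_{i}(t)$ with measurable coefficients $v_{i}$, and Parseval's identity in $\tangentspace{\mu(t)}$ gives $\metric{V(t)}{V(t)}_{\mu(t)}=\sum_{i=1}^{d}v_{i}(t)^{2}$, whence $\vfnorm[\mu][2]{V}=\sum_{i=1}^{d}\int_{\tdomain}v_{i}(t)^{2}\,\diffop\upsilon(t)$. This shows the coordinate map is a well-defined linear isometry; it is surjective because for any $(v_{i})\in\ltwo(\tdomain,\upsilon;\real^{d})$ the field $\sum_{i}v_{i}E_{i}$ is measurable and square-integrable, hence lies in $\mathscr{T}(\mu)$ and maps back to $(v_{i})$, while the identification by $\upsilon$-null sets matches on both sides. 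Thus $\mathscr{T}(\mu)$ is isometrically isomorphic to $\ltwo(\tdomain,\upsilon;\real^{d})$. Since $\upsilon$ is a finite Borel measure on the compact set $\tdomain\subset\real$, the scalar space $\ltwo(\tdomain,\upsilon)$ is a complete (Riesz--Fischer) and separable Hilbert space, and its $d$-fold product inherits both properties; transporting them through the isometry shows that $\mathscr{T}(\mu)$ is a separable Hilbert space.
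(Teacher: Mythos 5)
Your proof is correct, but it reaches the conclusion by a partly different route than the paper, and it also fills in a step the paper leaves implicit. The paper proves completeness \emph{directly}: it takes a Cauchy sequence in $\mathscr{T}(\mu)$, extracts a subsequence with summable increments, builds the pointwise limit in each fiber $\tangentspace{\mu(t)}$, and closes the argument with Fatou's lemma --- a fiberwise Riesz--Fischer argument that never needs a frame. Only for separability does the paper invoke a measurable orthonormal frame $\vec E$ and the coordinate isometry $\Upsilon:\mathscr{T}(\mu)\rightarrow\ltwo(\tdomain,\real^{d})$, and there it simply \emph{posits} that such a frame exists. You instead establish the isometric isomorphism with $\ltwo(\tdomain,\upsilon;\real^{d})$ first and transport both completeness and separability through it at once, which is more economical; and, importantly, you actually construct the measurable frame (countable atlas, disjointified measurable partition of $\tdomain$ via $\mu^{-1}(U_{\alpha})$, coordinate frames on each piece, fiberwise Gram--Schmidt with measurability checked through the metric coefficients $g_{ij}\circ\mu$). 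That construction is the one nontrivial point the paper's separability argument takes for granted, so your write-up is in this respect more complete. The trade-off is that your completeness proof is only as good as the frame construction, whereas the paper's direct argument is self-contained and would survive even if one worked with a notion of measurability for which a global measurable frame were harder to produce; both are valid here, since $\manifold$ is second countable and $\mu$ is Borel measurable, so your partition-and-Gram--Schmidt argument goes through.
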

We call the space $\mathscr{T}(\mu)$ the \emph{tensor Hilbert space} along $\mu$, as tangent vectors are a special type of tensor
and the above Hilbertian structure can be defined for tensor fields
along $\mu$. The above theorem is of paramount importance, in the sense that it suggests $\mathscr{T}(\mu)$ to serve as a cornerstone for Riemannian functional data analysis for two reasons. First, as shown in Section \ref{subsec:Random-Elements-on-THS}, \rev{via Riemannian logarithm maps, a Riemannian random process may be transformed into a tangent-vector-valued random process (called log-process in Section \ref{subsec:Random-Elements-on-THS}) that can be regarded as a random element in a tensor Hilbert space. Second, the rigorous theory of functional data analysis formulated in \cite{Hsing2015} by random elements in separable Hilbert spaces fully applies to the log-process.} 

One distinct feature of the tensor Hilbert space is
that, different curves that are even parameterized by the same domain
give rise to different tensor Hilbert spaces. In practice, one often
needs to deal with two different tensor Hilbert spaces at the same
time. For example, in the next subsection we will see that under some
conditions, a Riemannian random process $X$ can be conceived as a
random element on the tensor Hilbert space $\mathscr{T}(\mu)$ along
the intrinsic mean curve $\mu$. However, the mean curve is often
unknown and estimated from a random sample of $X$. Since the sample
mean curve $\hat{\mu}$ generally does not agree with the population
one, quantities of interest such as covariance operator and their
sample versions are defined on two different tensor Hilbert spaces $\mathscr{T}(\mu)$ and 
$\mathscr{T}(\hat{\mu})$, respectively. For
statistical analysis, one needs to compare the sample quantities
with their population counterparts and hence involves objects such as covariance operators
from two different tensor Hilbert spaces. 

In order to intrinsically quantify the discrepancy between objects
of the same kind from different tensor Hilbert spaces, we utilize
the Levi--Civita connection (p. 18, \citealt{Lee1997})  associated with the Riemannian manifold
$\manifold$. The Levi--Civita connection is uniquely
determined by the Riemannian structure. It is the only torsion-free
connection compatible with the Riemannian metric. Associated
with this connection is a unique parallel transport operator $\mathcal{P}_{p,q}$
that smoothly transports tangent vectors at $p$ along a curve to
$q$ and preserves the inner product. We shall emphasize that the parallel transportation is performed intrinsically. For instance, tangent vectors being transported always stay tangent to the manifold during  transportation, which is illustrated by the right panel of Figure \ref{fig:parallel-transport}. 
Although transport operator $\mathcal{P}_{p,q}$ depends on the curve
connecting $p$ and $q$, there exists a canonical choice of the curve
connecting two points, which is the minimizing geodesic between $p$
and $q$ (under some conditions, almost surely the minimizing geodesic is unique between two points randomly sampled from the manifold). The smoothness of parallel transport also implies that if $p$ and $q$ are not far apart, then the initial tangent vector and the transported one stays close (in the space of tangent bundle endowed with the Sasaki metric  \citep{Sasaki1958}). This feature is desirable for our purpose, as when sample mean $\hat{\mu}(t)$ approaches to $\mu(t)$, one expects a tangent vector at $\hat{\mu}(t)$ converges to its transported version at $\mu(t)$. Owing to these nice properties of parallel transport, it becomes an ideal tool to construct a mechanism of comparing objects from different tensor Hilbert spaces as follows.

Suppose $f$ and $h$ are two measurable curves on $\manifold$ defined on $\tdomain$. 
\rev{Let $\gamma_t(\cdot):=\gamma(t,\cdot)$ be a family of smooth curves that is parameterized by the interval $[0,1]$ (the way of parameterization here does not matter) and connects $f(t)$ to $h(t)$, 
	that is, $\gamma_{t}(0)=f(t)$
	and $\gamma_{t}(1)=h(t)$, such that $\gamma(\cdot,s)$ is measurable for all $s\in[0,1]$.} Suppose $v\in\tangentspace{f(t)}$ and let $V$
be a smooth vector field along $\gamma_{t}$ such that $\nabla_{\dot{\gamma}}V=0$
and $V(0)=v$, where $\nabla$ denotes the Levi--Civita connection of the manifold $\manifold$. The theory of Riemannian manifolds shows that such a vector field $V$ uniquely exists. This gives rise to the parallel transporter $\mathcal{P}_{f(t),h(t)}:\tangentspace{f(t)}\rightarrow\tangentspace{h(t)}$ along $\gamma_t$, 
defined by $\mathcal{P}_{f(t),h(t)}(v)=V(1)$. In other
words, $\mathcal{P}_{f(t),h(t)}$ parallelly transports $v$ to $V(1)\in\tangentspace{h(t)}$ along the curve $\gamma_t$.
As the parallel transporter determined by the Levi--Civita connection, $\mathcal{P}$ preserves the
inner product of tangent vectors along transportation, that is, $\metric uv_{f(t)}=\metric{\mathcal{P}_{f(t),h(t)}u}{\mathcal{P}_{f(t),h(t)}v}_{h(t)}$ for $u,v\in \tangentspace{f(t)}$.
Then we can define the parallel transport
of vector fields from $\mathscr{T}(f)$ to $\mathscr{T}(h)$, denoted
by $\Gamma_{f,h}$, $(\Gamma_{f,h}U)(t)=\mathcal{P}_{f(t),h(t)}(U(t))$
for all $U\in\mathscr{T}(f)$ and $t\in\tdomain$. One immediately
sees that $\Gamma_{f,h}$ is a linear operator on tensor Hilbert space.
Its adjoint, denoted by $\Gamma_{f,h}^{\ast}$, is a map from $\mathscr{T}(h)$
to $\mathscr{T}(f)$ and is given by $\vfinnerprod[f]{U,\Gamma_{f,h}^{\ast}V}=\vfinnerprod[h]{\Gamma_{f,h}U,V}$
for $U\in\mathscr{T}(f)$ and $V\in\mathscr{T}(h)$. Let $\mathscr{C}(f)$
denote the set of all Hilbert--Schmidt operators on $\mathscr{T}(f)$,
which is a Hilbert space with the Hilbert--Schmidt norm $\opnorm[f]{\cdot}$.
We observe that the operator $\Gamma_{f,h}$ also gives rise to a
mapping $\Phi_{f,h}$ from $\mathscr{C}(f)$ to $\mathscr{C}(h)$,
defined by $(\Phi_{f,h}\mathcal{A})V=\Gamma_{f,h}(\mathcal{A}(\Gamma_{f,h}^{\ast}V))$
for $\mathcal{A}\in\mathscr{C}(f)$ and $V\in\mathscr{T}(h)$. The
operator $\Phi_{f,h}$ is called the parallel transporter of Hilbert--Schmidt
operators. Below are some important properties of $\Gamma_{f,h}$
and $\Phi_{f,h}$, where $(x_{1}\otimes x_{2})x_{3}\define\vfinnerprod[f]{x_{1},x_{3}}x_{2}$
for $x_{1},x_{2},x_{3}\in\mathscr{T}(f)$.
\begin{prop}
	\label{prop:property-Gamma-operator}Suppose $U\in\mathscr{T}(f)$,
	$V\in\mathscr{T}(h)$, $\mathcal{A}\in\mathscr{C}(f)$ and $\mathcal{B}\in\mathscr{C}(h)$ for two measurable curves $f$ and $h$ on $\manifold$. \rev{Let $\Gamma_{f,h}$ and $\Phi_{f,h}$ be parallel transporters along a family of smooth curves $\gamma_t$ defined above, such that $\gamma(t,\cdot)$ is smooth and $\gamma(,s)$ is measurable. Then the following statements regarding $\Gamma_{f,h}$ and $\Phi_{f,h}$ hold.}
	\begin{enumerate}
		\setlength\itemsep{1.5mm}
		\item \label{prop:property-Gamma-operator-enu-1}The operator $\Gamma_{f,h}$
		is a unitary transformation from $\mathscr{T}(f)$ to $\mathscr{T}(h)$.
		\item \label{prop:property-Gamma-operator-enu-2}$\Gamma_{f,h}^{\ast}=\Gamma_{h,f}$.
		Also, $\vfnorm[h]{\Gamma_{f,h}U-V}=\vfnorm[f]{U-\Gamma_{h,f}V}$.
		\item \label{prop:property-Gamma-operator-enu-3}$\Gamma_{f,h}(\mathcal{A}U)=(\Phi_{f,h}\mathcal{A})(\Gamma_{f,h}U)$.
		\item \label{prop:property-Gamma-operator-enu-4}If $\mathcal{A}$ is invertible,
		then $\Phi_{f,h}\mathcal{A}^{-1}=(\Phi_{f,h}\mathcal{A})^{-1}$.
		\item \label{prop:property-Gamma-operator-enu-5}$\Phi_{f,h}\sum_{k}c_{k}\varphi_{k}\otimes\varphi_{k}=\sum_{k}c_{k}(\Gamma_{f,h}\varphi_{k})\otimes(\Gamma_{f,h}\varphi_{k})$,
		where $c_{k}$ are scalar constants, and $\varphi_{k}\in\mathscr{T}(f)$.
		\item \label{prop:property-Gamma-operator-enu-6}$\opnorm[h]{\Phi_{f,h}\mathcal{A}-\mathcal{B}}=\opnorm[f]{\mathcal{A}-\Phi_{h,f}\mathcal{B}}$.
	\end{enumerate}
\end{prop}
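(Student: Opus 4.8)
The plan is to reduce every assertion to two pointwise facts about the linear isomorphism $\mathcal{P}_{f(t),h(t)}:\tangentspace{f(t)}\to\tangentspace{h(t)}$: it preserves the fiber inner product (as stated just before the proposition), and its inverse is the parallel transporter $\mathcal{P}_{h(t),f(t)}$ obtained by traversing $\gamma_t$ in the reverse direction. The one genuinely analytic point, and the step I expect to be the main obstacle, is showing that $\Gamma_{f,h}$ is well defined, i.e. that $t\mapsto(\Gamma_{f,h}U)(t)=\mathcal{P}_{f(t),h(t)}(U(t))$ is again a measurable vector field (now along $h$) whenever $U\in\mathscr{T}(f)$. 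This is exactly where the hypothesis that $\gamma(\cdot,s)$ is measurable enters: the map $\mathcal{P}_{f(t),h(t)}$ is the time-one solution operator of the linear parallel-transport ODE along $\gamma_t$, and since that operator depends continuously on the curve, $t\mapsto\mathcal{P}_{f(t),h(t)}$ is measurable, so its pointwise composition with the measurable field $U$ is measurable. Once this is in hand, the fiberwise isometry gives $\vfinnerprod[h]{\Gamma_{f,h}U,\Gamma_{f,h}V}=\vfinnerprod[f]{U,V}$ upon integrating over $\tdomain$, so $\Gamma_{f,h}$ is a linear isometry; surjectivity follows because $V\mapsto\mathcal{P}_{h(t),f(t)}(V(t))$ furnishes a preimage, yielding claim \ref{prop:property-Gamma-operator-enu-1}.

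For claim \ref{prop:property-Gamma-operator-enu-2} I would first note that the fiberwise inverse relation gives $\Gamma_{h,f}\Gamma_{f,h}=\mathrm{id}$ and $\Gamma_{f,h}\Gamma_{h,f}=\mathrm{id}$, so $\Gamma_{h,f}=\Gamma_{f,h}^{-1}$; combined with unitarity this forces $\Gamma_{f,h}^{\ast}=\Gamma_{f,h}^{-1}=\Gamma_{h,f}$. Equivalently one checks the adjoint identity directly: moving $\mathcal{P}_{f(t),h(t)}$ across the fiber inner product via its isometry turns $\vfinnerprod[h]{\Gamma_{f,h}U,V}$ into $\vfinnerprod[f]{U,\Gamma_{h,f}V}$. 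The norm identity $\vfnorm[h]{\Gamma_{f,h}U-V}=\vfnorm[f]{U-\Gamma_{h,f}V}$ is then immediate on applying the isometry $\Gamma_{h,f}$ to $\Gamma_{f,h}U-V$ and using $\Gamma_{h,f}\Gamma_{f,h}=\mathrm{id}$. Claim \ref{prop:property-Gamma-operator-enu-3} is purely formal: substituting $V=\Gamma_{f,h}U$ into the definition of $\Phi_{f,h}$ and using $\Gamma_{f,h}^{\ast}\Gamma_{f,h}=\mathrm{id}$ collapses the right-hand side to $\Gamma_{f,h}(\mathcal{A}U)$.

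The remaining claims are cleanest once I record that, by definition, $\Phi_{f,h}\mathcal{A}=\Gamma_{f,h}\,\mathcal{A}\,\Gamma_{f,h}^{\ast}$ is the conjugation of $\mathcal{A}$ by the unitary $\Gamma_{f,h}$. Such conjugation is a multiplicative homomorphism of operators fixing the identity, whence $\Phi_{f,h}(\mathcal{A}^{-1})=(\Phi_{f,h}\mathcal{A})^{-1}$, giving claim \ref{prop:property-Gamma-operator-enu-4}, and it has inverse $\Phi_{h,f}$ (again from $\Gamma_{h,f}=\Gamma_{f,h}^{-1}$). For claim \ref{prop:property-Gamma-operator-enu-5} I would compute the action on a single rank-one operator: the adjoint relation $\vfinnerprod[f]{\varphi,\Gamma_{f,h}^{\ast}V}=\vfinnerprod[h]{\Gamma_{f,h}\varphi,V}$ shows $\Phi_{f,h}(\varphi\otimes\varphi)=(\Gamma_{f,h}\varphi)\otimes(\Gamma_{f,h}\varphi)$, and linearity together with the continuity of $\Phi_{f,h}$ (established next) lets me pass the finite or convergent sum through. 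Finally, for claim \ref{prop:property-Gamma-operator-enu-6} I would show $\Phi_{f,h}$ is a Hilbert--Schmidt isometry by evaluating $\opnorm[h]{\Phi_{f,h}\mathcal{A}}$ against the orthonormal basis $\{\Gamma_{f,h}g_i\}$ of $\mathscr{T}(h)$ coming from any orthonormal basis $\{g_i\}$ of $\mathscr{T}(f)$: claim \ref{prop:property-Gamma-operator-enu-3} gives $(\Phi_{f,h}\mathcal{A})\Gamma_{f,h}g_i=\Gamma_{f,h}(\mathcal{A}g_i)$, and the fiber isometry then yields $\opnorm[h]{\Phi_{f,h}\mathcal{A}}=\opnorm[f]{\mathcal{A}}$. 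Writing $\Phi_{f,h}\mathcal{A}-\mathcal{B}=\Phi_{f,h}(\mathcal{A}-\Phi_{h,f}\mathcal{B})$ via $\Phi_{f,h}\Phi_{h,f}=\mathrm{id}$ and applying this isometry gives claim \ref{prop:property-Gamma-operator-enu-6}.
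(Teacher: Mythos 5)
Your proof is correct and follows essentially the same route as the paper's: reduce everything to the pointwise facts that $\mathcal{P}_{f(t),h(t)}$ is a fiberwise isometry with inverse $\mathcal{P}_{h(t),f(t)}$, then derive parts \ref{prop:property-Gamma-operator-enu-3}--\ref{prop:property-Gamma-operator-enu-5} by formal algebra with $\Gamma^{\ast}\Gamma=\mathrm{id}$ and the adjoint relation on rank-one operators. You actually spell out a few steps the paper leaves implicit (the measurability of $t\mapsto\mathcal{P}_{f(t),h(t)}$ and the Hilbert--Schmidt isometry argument for part \ref{prop:property-Gamma-operator-enu-6}), but the underlying argument is the same.
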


We define $U\ominus_{\Gamma}V\define\Gamma_{f,h}U-V$ for $U\in\mathscr{T}(f)$
and $V\in\mathscr{T}(h)$, and $\mathcal{A}\ominus_{\Phi}\mathcal{B}\define\Phi_{f,h}\mathcal{A}-\mathcal{B}$
for operators $\mathcal{A}$ and $\mathcal{B}$. To quantify the discrepancy
between an element $U$ in $\mathscr{T}(f)$ and another one $V$ in $\mathscr{T}(h)$,
we can use the quantity $\vfnorm[h]{U\ominus_{\Gamma}V}$. Similarly,
we adopt $\opnorm[h]{\mathcal{A}\ominus_{\Phi}\mathcal{B}}$ as
discrepancy measure for two covariance operators $\mathcal{A}$ and
$\mathcal{B}$. These quantities are intrinsic as they are built on intrinsic geometric concepts. In light of Proposition \ref{prop:property-Gamma-operator}, they are
symmetric under the parallel transport, that is, transporting $\mathcal{A}$
to $\mathcal{B}$ yields the same discrepancy measure as transporting
$\mathcal{B}$ to $\mathcal{A}$. We also note that, when $\manifold=\real^{\dim}$,
the difference operators $\ominus_{\Gamma}$ and $\ominus_{\Phi}$
reduce to the regular vector and operator difference, that is, $U\ominus_{\Gamma}V$
becomes $U-V$, while $\mathcal{A}\ominus_{\Phi}\mathcal{B}$ becomes $\mathcal{A}-\mathcal{B}$.
Therefore, $\ominus_{\Gamma}$ and $\ominus_{\Phi}$ can be viewed
as generalization of the regular vector and operator difference to
a Riemannian setting. \rev{One shall note that $\Gamma$ and $\Phi$ depend on the choice of the family of curves $\gamma$, a canonical choice of which is discussed in Section \ref{subsec:Asymptotics-fpca}.}

\subsection{Random elements on tensor Hilbert spaces\label{subsec:Random-Elements-on-THS}}

Let $X$ be a Riemannian random process. In order to introduce the
concept of intrinsic mean function for $X$, we define a family of functions indexed by $t$: 
\begin{equation}
F(p,t)=\expect d_{\manifold}^{2}(X(t),p),\quad\quad p\in\manifold,t\in\tdomain.\label{eq:frechet-variances}
\end{equation}
For a fixed $t$, if there exists a unique $q\in\manifold$ that minimizes
$F(p,t)$ over all $p\in\manifold$, then $q$ is called the intrinsic
mean (also called Fr\'{e}chet mean) at $t$, denoted by $\mu(t)$,
that is,
\[
\mu(t)=\underset{p\in\manifold}{\arg\min}\,F(p,t).
\]
As required for intrinsic analysis, we assume the following
condition.

\begin{description}[labelwidth=1.2cm,leftmargin=1.4cm,align=left]
	\item[\mylabel{cond:exist-mean-function}{A.1}] The intrinsic mean function $\mu$ exists.
\end{description}
\rev{We refer readers to \citet{Bhattacharya2003} and \cite{Afsari2011} for conditions under which the intrinsic mean of a random variable on a general manifold exists and is
	unique}. For example, according to Cartan--Hadamard theorem, if the manifold
is simply connected and complete with nonpositive sectional curvature,
then intrinsic mean function always exists and is unique as long as  for all $t\in\tdomain$, $F(p,t)<\infty$ for some $p\in\manifold$.

Since $\manifold$ is geodesically complete, by Hopf--Rinow theorem (p. 108, \citealt{Lee1997}),
its exponential map $\Exp_{p}$ at each $p$ is defined on the entire
$\tangentspace p$. As $\Exp_{p}$ might not be injective, in order
to define its inverse, we restrict $\Exp_{p}$ to a subset
of the tangent space $\tangentspace p$. {Let $\mathrm{Cut}(p)$ denote the set of all tangent vectors $v\in\tangentspace p$ such that the geodesic $\gamma(t)=\Exp_{p}(tv)$ fails to be minimizing for $t\in[0,1+\epsilon)$ for each $\epsilon>0$.} 
Now, we define
$\Exp_{p}$ only on $\mathscr{D}_{p}\define\tangentspace p\backslash\mathrm{Cut}(p)$.
The image of $\Exp_{p}$, denoted by $\mathrm{Im}(\Exp_{p})$, consists
of points $q$ in $\manifold$, such that $q=\Exp_{p}v$ for some
$v\in\mathscr{D}_{p}$. \rev{In this case, the inverse of $\Exp_{p}$ exists and is
	called Riemannian logarithm map, which is denoted by $\Log_{p}$ and maps $q$ to $v$.} We shall make the
following assumption.

\begin{description}[labelwidth=1.2cm,leftmargin=1.4cm,align=left]
	\item[\mylabel{cond:cut-locus}{A.2}] $\prob\{\forall t\in\tdomain: X(t)\in\mathrm{Im}(\Exp_{\mu(t)})\}=1$.
\end{description}Then, $\Log_{\mu(t)}X(t)$ is almost surely defined for all $t\in\tdomain$.
The condition is superfluous if $\Exp_{\mu(t)}$ is injective for
all $t$, like the manifold of $m\times m$ SPDs endowed with the affine-invariant metric. 

In the sequel we shall assume  $X$ satisfies conditions
\ref{cond:exist-mean-function} and \ref{cond:cut-locus}. An important
observation is that the log-process $\{\Log_{\mu(t)}X(t)\}_{t\in\tdomain}$
(denoted by $\Log_{\mu}X$ for short) is a random vector field
along $\mu$. If we assume continuity for the sample paths of $X$,
then the process $\Log_{\mu}X$ is measurable with respect to the
product $\sigma$-algebra $\borel(\tdomain)\times\mathscr{E}$ and
the Borel algebra $\mathscr{B}(\tm)$, where $\mathscr{E}$ is the $\sigma$-algebra of the probability space. Furthermore, if $\expect\vfnorm[][2]{\Log_{\mu}X}<\infty$,
then according to Theorem 7.4.2 of \citet{Hsing2015}, $\Log_{\mu}X$
can be viewed as a tensor Hilbert space $\mathscr{T}(\mu)$ valued
random element. Observing that $\expect\Log_{\mu}X=0$ according to
Theorem 2.1 of \citet{Bhattacharya2003}, the intrinsic covariance
operator for $\Log_{\mu}X$ is given by $\covarop=\expect(\Log_{\mu}X\otimes\Log_{\mu}X)$.
This operator is nuclear and self-adjoint. It then admits the following
eigendecomposition (Theorem 7.2.6, \citealp{Hsing2015})
\begin{equation}
\covarop=\sum_{k=1}^{\infty}\lambda_{k}\phi_{k}\otimes\phi_{k}\label{eq:eigen-decomp-coop}
\end{equation}
with eigenvalues $\lambda_{1}\geq\lambda_{2}\cdots\ge0$ and orthonormal
eigenelements $\phi_{k}$ that form a complete orthonormal system for $\mathscr{T}(\mu)$. Also, with probability one, the log-process
of $X$ has the following Karhunen--Lo\`{e}ve expansion
\begin{equation}\label{eq:intrinsic-V}
\Log_{\mu}X=\sum_{k=1}^{\infty}\xi_{k}\phi_{k}
\end{equation}
with $\xi_{k}\define\vfinnerprod{X,\phi_{k}}$ being uncorrelated
and centered random variables. Therefore, we obtain the intrinsic
Riemannian Karhunen--Lo\`{e}ve (iRKL) expansion for $X$ given by
\begin{equation}
X(t)=\Exp_{\mu(t)}\sum_{k=1}^{\infty}\xi_{k}\phi_{k}(t).\label{eq:intrinsic-KL-expansion}
\end{equation}
The elements $\phi_{k}$ are called intrinsic Riemannian functional
principal component (iRFPC), while the variables $\xi_{k}$ are called
intrinsic iRFPC scores. This result is summarized in the following
theorem whose proof is already contained in the above derivation and
hence omitted. \rev{We shall note that the continuity assumption on sample paths can be weakened to piece-wise continuity.}
\begin{thm}
	[Intrinsic Karhunen--Lo\`{e}ve Representation]Assume that $X$ satisfies
	assumptions \ref{cond:exist-mean-function} and \ref{cond:cut-locus}.
	If sample paths of $X$ are continuous and $\expect\vfnorm[][2]{\Log_{\mu}X}<\infty$,
	then the intrinsic covariance operator $\covarop=\expect(\Log_{\mu} X\otimes \Log_{\mu} X)$
	of $\Log_{\mu}X$ admits the decomposition (\ref{eq:eigen-decomp-coop}), and
	the random process $X$ admits the representation (\ref{eq:intrinsic-KL-expansion}).
\end{thm}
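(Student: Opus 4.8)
The plan is to reduce the statement to the classical spectral theory of square-integrable random elements in a separable Hilbert space, taking the tensor Hilbert space $\mathscr{T}(\mu)$ supplied by Theorem~\ref{prop:hilbertian-vfc} as the ambient space. First I would confirm that the log-process $\Log_{\mu}X$ is almost surely a well-defined vector field along $\mu$: condition \ref{cond:exist-mean-function} guarantees that $\mu(t)$ exists for every $t\in\tdomain$, and condition \ref{cond:cut-locus} guarantees that $X(t)\in\mathrm{Im}(\Exp_{\mu(t)})$ simultaneously for all $t$ with probability one, so that $\Log_{\mu(t)}X(t)\in\tangentspace{\mu(t)}$ is a.s.\ defined for every $t$. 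This identifies $\Log_{\mu}X$ pointwise as a random vector field along $\mu$.

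The crucial and most delicate step is to promote this pointwise object to a genuine random element of $\mathscr{T}(\mu)$. Here I would use continuity of the sample paths of $X$ to obtain joint measurability of $(t,\omega)\mapsto\Log_{\mu(t)}X(t)$ with respect to the product $\sigma$-algebra $\borel(\tdomain)\times\mathscr{E}$ and the Borel $\sigma$-algebra $\borel(\tm)$. Because $\mathscr{T}(\mu)$ is separable by Theorem~\ref{prop:hilbertian-vfc}, weak and strong measurability coincide (Pettis), and the joint measurability together with the moment bound $\expect\vfnorm[][2]{\Log_{\mu}X}<\infty$ lets me invoke Theorem~7.4.2 of \citet{Hsing2015} to conclude that $\Log_{\mu}X$ is a $\mathscr{T}(\mu)$-valued random element with finite second moment. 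I expect this measurability bridge to be the main obstacle, since it is where the geometry, namely tangent vectors sitting over varying base points $\mu(t)$, must be reconciled with the abstract Hilbert-space framework; the separability furnished by Theorem~\ref{prop:hilbertian-vfc} is precisely what makes it go through.

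Once $\Log_{\mu}X$ is a bona fide square-integrable random element, the remainder is standard Hilbert-space machinery. I would first record that its mean vanishes, $\expect\Log_{\mu}X=0$, which is exactly the first-order optimality of the Fr\'echet mean (Theorem~2.1 of \citet{Bhattacharya2003}). The covariance operator $\covarop=\expect(\Log_{\mu}X\otimes\Log_{\mu}X)$ is then self-adjoint, positive, and nuclear, so the spectral theorem for such operators (Theorem~7.2.6 of \citet{Hsing2015}) yields the eigendecomposition (\ref{eq:eigen-decomp-coop}) with $\lambda_{1}\geq\lambda_{2}\geq\cdots\geq0$ and an orthonormal basis $\{\phi_{k}\}$ of $\mathscr{T}(\mu)$. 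The classical Karhunen--Lo\`eve theorem in Hilbert space then gives (\ref{eq:intrinsic-V}), namely $\Log_{\mu}X=\sum_{k}\xi_{k}\phi_{k}$ with $\xi_{k}=\vfinnerprod{\Log_{\mu}X,\phi_{k}}$; orthonormality of the $\phi_{k}$ together with $\covarop\phi_{k}=\lambda_{k}\phi_{k}$ makes $\expect\xi_{j}\xi_{k}=\lambda_{k}\delta_{jk}$ immediate, so the scores are centered and uncorrelated.

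Finally I would transport the expansion back to the manifold. Applying $\Exp_{\mu(t)}$ pointwise to the series representation of $\Log_{\mu}X$ and using $\Exp_{\mu(t)}\Log_{\mu(t)}X(t)=X(t)$, valid a.s.\ under \ref{cond:cut-locus}, produces the intrinsic Riemannian Karhunen--Lo\`eve expansion (\ref{eq:intrinsic-KL-expansion}). The only care needed is to note that the series in (\ref{eq:intrinsic-V}) converges in the $\vfnorm{\cdot}$ norm of $\mathscr{T}(\mu)$, hence, after passing to a subsequence if desired, $\upsilon$-almost-everywhere pointwise in $\tangentspace{\mu(t)}$, so that the pointwise application of the continuous map $\Exp_{\mu(t)}$ is justified for $\upsilon$-a.e.\ $t$.
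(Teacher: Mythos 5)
Your proposal is correct and follows essentially the same route as the paper, which simply notes that the proof is contained in the derivation preceding the theorem: well-definedness of $\Log_{\mu}X$ under \ref{cond:exist-mean-function} and \ref{cond:cut-locus}, joint measurability from sample-path continuity combined with Theorem~7.4.2 of \citet{Hsing2015}, zero mean via Theorem~2.1 of \citet{Bhattacharya2003}, the spectral decomposition of the nuclear self-adjoint covariance operator via Theorem~7.2.6 of \citet{Hsing2015}, and the pointwise application of $\Exp_{\mu(t)}$. Your added remark on the mode of convergence of the series when passing back to the manifold is a reasonable extra precaution but does not change the argument.
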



In practice, the series at (\ref{eq:intrinsic-KL-expansion}) is truncated
at some positive integer $K$, resulting in a truncated intrinsic
Riemannian Karhunen--Lo\`{e}ve expansion of $X$, given by $X_{K}=\Exp_{\mu}W_{K}$
with $W_{K}=\sum_{k=1}^{K}\xi_{k}\phi_{k}$. The quality of the approximation
of $X_{K}$ for $X$ is quantified by $\int_{\tdomain}d_{\manifold}^{2}(X(t),X_{K}(t))\diffop\upsilon(t)$,
and can be shown by a method similar to  \citet{Dai2017} that if the manifold has non-negative
sectional curvature everywhere, then $\int_{\tdomain}d_{\manifold}^{2}(X(t),X_{K}(t))\diffop\upsilon(t)\leq\vfnorm[][2]{\Log_{\mu}X-W_{K}}$.
For manifolds with negative sectional curvatures, such inequality
in general does not hold. However, for Riemannian random process $X$
that almost surely lies in a compact subset of $\manifold$, the residual
$\int_{\tdomain}d_{\manifold}^{2}(X(t),X_{K}(t))\diffop\upsilon(t)$
can be still bounded by $\vfnorm[][2]{\Log_{\mu}X-W_{K}}$ up to a
scaling constant.
\begin{prop}
	\label{prop:truncation-residual}Assume that conditions \ref{cond:exist-mean-function}
	and \ref{cond:cut-locus} hold, and the sectional curvature
	of $\manifold$ is bounded from below by $\kappa\in\real$. Let $\mathcal K$ be a subset of $\manifold$. If  $\kappa\geq0$,  we let $\mathcal{K}=\manifold$, and if $\kappa<0$, we  assume that $\mathcal{K}$ is compact. Then, 
	for some constant $C>0$, $d_{\manifold}(P,Q)\leq\sqrt{C}|\Log_{O}P-\Log_{O}Q|$
	for all $O,P,Q\in\mathcal{K}$. Consequently, if $X\in\mathcal{K}$
	almost surely, then $\int_{\tdomain}d_{\manifold}^{2}(X(t),X_{K}(t))\diffop\upsilon(t)\leq C\vfnorm[][2]{\Log_{\mu}X-W_{K}}$.
\end{prop}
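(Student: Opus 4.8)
The plan is to reduce the whole statement to a single pointwise metric inequality and then integrate. For the pointwise claim, fix $O,P,Q\in\mathcal K$, set $u=\Log_O P$ and $v=\Log_O Q$ so that $P=\Exp_O u$ and $Q=\Exp_O v$, and observe that the image under $\Exp_O$ of the Euclidean segment $c(\tau)=(1-\tau)u+\tau v$, $\tau\in[0,1]$, is a curve $\Gamma(\tau)=\Exp_O(c(\tau))$ joining $P$ to $Q$. Since $\manifold$ is geodesically complete, $\Exp_O$ is smooth on all of $\tangentspace O$, so $\Gamma$ is well defined and admissible, whence $d_{\manifold}(P,Q)\le\mathrm{Length}(\Gamma)=\int_0^1|\diffop(\Exp_O)_{c(\tau)}(v-u)|\,\diffop\tau$. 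Thus it suffices to bound the operator norm of the differential $\diffop(\Exp_O)_w$ along the segment.

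The key step is a curvature comparison for $\diffop(\Exp_O)_w$. I would use the identity $\diffop(\Exp_O)_w(\xi)=J(1)$, where $J$ is the Jacobi field along the (speed-$|w|$) geodesic $s\mapsto\Exp_O(sw)$ with $J(0)=0$ and $J'(0)=\xi$. Splitting $\xi$ into components tangent and normal to $w$, the tangential Jacobi field is linear and contributes exactly $|\xi^{T}|$ at $s=1$; for the normal component the Rauch comparison theorem, invoked with the lower bound $\kappa$ on sectional curvature and the constant-curvature-$\kappa$ model as comparison space, yields $|J^{N}(1)|\le C_\kappa(|w|)\,|\xi^{N}|$, where $C_\kappa(r)=\sin(r\sqrt\kappa)/(r\sqrt\kappa)$ if $\kappa>0$, $C_\kappa(r)=1$ if $\kappa=0$, and $C_\kappa(r)=\sinh(r\sqrt{-\kappa})/(r\sqrt{-\kappa})$ if $\kappa<0$. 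Since $J^{T}(1)$ and $J^{N}(1)$ are orthogonal, combining the two components gives $|\diffop(\Exp_O)_w(\xi)|\le\max\{1,C_\kappa(|w|)\}\,|\xi|$, i.e.\ $\|\diffop(\Exp_O)_w\|\le\max\{1,C_\kappa(|w|)\}$.

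Now I would split into the two regimes. When $\kappa\ge0$ one has $C_\kappa(r)\le1$ (from $\sin x\le x$; for $\kappa>0$ the radii satisfy $r\sqrt\kappa\le\pi$ by the Bonnet--Myers diameter bound, which simultaneously ensures the comparison sphere has no conjugate point before the endpoint so that Rauch applies), so $\|\diffop(\Exp_O)_w\|\le1$ and the pointwise inequality holds on $\mathcal K=\manifold$ with $C=1$. When $\kappa<0$, compactness of $\mathcal K$ gives a finite diameter $D=\mathrm{diam}(\mathcal K)$; since $|u|=d_{\manifold}(O,P)\le D$ and $|v|\le D$ we get $|c(\tau)|\le D$, and as $C_\kappa$ is increasing, $\max\{1,C_\kappa(|c(\tau)|)\}\le C_\kappa(D)$ uniformly in $O,P,Q\in\mathcal K$ and $\tau$. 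Integrating the differential bound over $\tau$ then yields $d_{\manifold}(P,Q)\le C_\kappa(D)\,|u-v|$, so $\sqrt C=\max\{1,C_\kappa(D)\}$ works. For the consequence I would apply the pointwise inequality at each $t$ with $O=\mu(t)$, $P=X(t)$, $Q=X_K(t)$, using $\Log_{\mu(t)}X(t)=(\Log_\mu X)(t)$ and $\Log_{\mu(t)}X_K(t)=W_K(t)$, then square and integrate against $\upsilon$ to obtain $\int_\tdomain d_{\manifold}^2(X(t),X_K(t))\,\diffop\upsilon(t)\le C\vfnorm[][2]{\Log_\mu X-W_K}$.

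The hardest part is the correct and uniform use of Rauch comparison: one must exploit the lower curvature bound to get an \emph{upper} bound on normal Jacobi fields by comparing to the model of curvature $\kappa$ (not above it), verify the no-conjugate-point hypothesis on the comparison space (automatic for $\kappa\le0$, and secured for $\kappa>0$ by the bound $r\sqrt\kappa\le\pi$ from Bonnet--Myers), and keep track of the non-unit geodesic speed $|w|$ entering $C_\kappa$. The other delicate point is the uniformity of $C$ over the base point $O$, which in the negative-curvature regime is resolved only through the finite diameter supplied by compactness of $\mathcal K$. As an alternative route to the same estimate, a Toponogov (hinge) comparison of geodesic triangles, combined with the Euclidean identity $|u-v|^2=|u|^2+|v|^2-2|u|\,|v|\cos\theta$ in $\tangentspace O$, would yield the pointwise inequality directly from the spherical/hyperbolic law of cosines.
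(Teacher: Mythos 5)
Your reduction to a pointwise inequality followed by squaring and integrating is exactly the paper's structure, and the Toponogov route you mention in your last sentence is in fact the paper's actual proof. But your primary argument --- pushing the Euclidean segment $c(\tau)=(1-\tau)u+\tau v$ through $\Exp_O$ and bounding $\|\diffop(\Exp_O)_{c(\tau)}\|$ by Rauch --- has a genuine gap. The Jacobi-field \emph{upper} bound $|J(1)|\leq C_\kappa(|w|)\,|\xi^{N}|$ under the \emph{lower} curvature bound $K\geq\kappa$ requires that the geodesic $s\mapsto\Exp_O(s w)$ in $\manifold$ itself have no conjugate points on $(0,1]$: in the Rauch comparison the no-conjugate-point hypothesis sits on the geodesic in the space with the \emph{larger} curvature, which here is $\manifold$, not the constant-curvature model as you assert. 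A lower sectional curvature bound does not exclude conjugate points (a sphere has $K\geq -1$), and while the endpoints $u=\Log_O P$ and $v=\Log_O Q$ lie in $\mathscr{D}_{O}$ and hence generate minimizing, conjugate-point-free geodesics, the intermediate points $c(\tau)$ need not: $\mathscr{D}_{O}$ is star-shaped about the origin but not convex, so $s\mapsto\Exp_O(s c(\tau))$ may develop a conjugate point before $s=1$. Precisely there your operator-norm bound on $\diffop(\Exp_O)_{c(\tau)}$ is unjustified, and with it the length estimate for $\Gamma$. The same issue already affects your $\kappa\geq 0$ case (conjugate points occur at distance $\pi/\sqrt{\sup K}$, which can be far smaller than the Bonnet--Myers diameter $\pi/\sqrt{\kappa}$), so the claim $\|\diffop(\Exp_O)_w\|\leq 1$ is not established by this argument either.

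The paper avoids all of this by working at the level of distances rather than differentials: for $\kappa\geq 0$ it cites the known inequality $d_{\manifold}(\Exp_O u,\Exp_O v)\leq|u-v|$ with $C=1$, and for $\kappa<0$ it applies the hinge version of Toponogov's theorem --- a global statement with no injectivity-radius or conjugate-point hypotheses --- to get $d_{\manifold}(P,Q)\leq d_{\mathbb{M}_\kappa}(P',Q')$, then extracts the bound from the hyperbolic law of cosines together with the Euclidean identity $h^2=a^2+b^2-2ab\cos\gamma$ and elementary $\cosh$/$\sinh$ estimates, with the constant depending only on $\sqrt{-\kappa}$ and $\mathrm{diam}(\mathcal{K})$. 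If you carry out the hinge computation you sketch at the end (rather than the Jacobi-field route), you recover the paper's proof; as written, the main argument does not go through.
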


\subsection{Computation in orthonormal frames\label{subsec:Computation-in-Orthonormal}}

In practical computation, one might want to work with specific orthonormal
bases for tangent spaces. A choice of orthonormal basis for each
tangent space constitutes an orthonormal frame on the manifold. In
this section, we study the representation of the intrinsic Riemannian
Karhunen--Lo\`{e}ve expansion under an orthonormal frame and  formulas for change of orthonormal frames.

Let $\vec E=(E_{1},\ldots,E_{\dim})$ be a continuous orthonormal
frame, that is, each $E_{j}$ is a vector field of $\manifold$
such that $\metric{E_{j}(p)}{E_{j}(p)}_{p}=1$ and $\metric{E_{j}(p)}{E_{k}(p)}_{p}=0$
for $j\neq k$ and all $p\in\manifold$. At each point $p$, $\{E_{1}(p),\ldots,E_{\dim}(p)\}$
form an orthonormal basis for $\tangentspace p$. The coordinate of
$\Log_{\mu(t)}X(t)$ with respect to $\{E_{1}(\mu(t)),\ldots,E_{\dim}(\mu(t))\}$
is denoted by $Z_{\vec E}(t)$, with the subscript $\vec E$ indicating
its dependence on the frame. The resulting process $Z_{\vec E}$ is
called the \emph{$\vec{E}$-coordinate process} of $X$. Note that $Z_{\vec E}$
is a regular $\real^{\dim}$ valued random process defined on $\tdomain$,
and classic theory in \citet{Hsing2015} applies to $Z_{\vec E}$.
For example, its $\ltwo$ norm is defined by $\lpnorm[2]{Z_{\vec E}}=\{\expect\int_{\tdomain}|Z_{\vec E}(t)|^{2}\diffop t\}^{1/2}$,
where $|\cdot|$ denotes the canonical norm on $\real^{\dim}$. One
can show that $\lpnorm[2][2]{Z_{\vec E}}=\expect\vfnorm[][2]{\Log_{\mu}X}$.
Therefore, if $\expect\vfnorm[][2]{\Log_{\mu}X}<\infty$, then the
covariance function exists and is $d\times d$ matrix-valued, quantified by $\covarop_{\vec E}(s,t)=\expect\{Z_{\vec E}(s)Z_{\vec E}(t)^{T}\}$
\citep{Kelly1960,Balakrishn1960}, noting that $\expect Z_{\vec E}(t)=0$ as $\expect\Log_{\mu(t)}X(t)=0$ for all $t\in\tdomain$.
Also, the vector-valued Mercer's theorem implies the eigendecomposition
\begin{equation}
\covarop_{\vec E}(s,t)=\sum_{k=1}^{\infty}\lambda_{k}\phi_{\vec E,k}(s)\phi_{\vec E,k}^{T}(t),\label{eq:coordinate-eigen-decomp}
\end{equation}
with eigenvalues $\lambda_{1}\geq\lambda_{2}\geq\cdots$ and corresponding
eigenfunctions $\phi_{\vec E,k}$.  Here, the subscript $\vec E$ in $\phi_{\vec E,k}$ is to emphasize
the dependence on the chosen frame. One can see that $\phi_{\vec E,k}$
is a coordinate representation of $\phi_{k}$, that is, $\phi_{k}=\phi_{\vec E,k}^{T}\vec E$.

The coordinate process $Z_{\vec E}$ admits the vector-valued Karhunen--Lo\`{e}ve
expansion
\begin{equation}
Z_{\vec E}(t)=\sum_{k=1}^{\infty}\xi_{k}\phi_{\vec E,k}(t)\label{eq:coordinate-KL-representation}
\end{equation}
under the assumption of mean square continuity of $Z_{\vec E}$, according
to Theorem 7.3.5 of \citet{Hsing2015}, where $\xi_{k}=\int_{\tdomain}Z_{\vec E}^{T}(t)\phi_{\vec E,k}(t)\diffop\upsilon(t)$.
While the covariance function and eigenfunctions of $Z_{\vec E}$
depend on frames, $\lambda_{k}$ and $\xi_{k}$ in (\ref{eq:intrinsic-KL-expansion})
and (\ref{eq:coordinate-KL-representation}) are not,
which justifies
the absence of $\vec E$ in their subscripts and the use of the same notation
for eigenvalues and iRFPC scores in (\ref{eq:eigen-decomp-coop}),
(\ref{eq:intrinsic-KL-expansion}), (\ref{eq:coordinate-eigen-decomp})
and (\ref{eq:coordinate-KL-representation}). This follows from the
formulas for change of frames that we shall develop below.

Suppose $\vec A=(A_{1},\ldots,A_{\dim})$ is another orthonormal frame.
Change from $\vec E(p)=\{E_{1}(p),\ldots,E_{\dim}(p)\}$ to $\vec A(p)=\{A_{1}(p),\ldots,A_{\dim}(p)\}$
can be characterized by a unitary matrix $\vec O_{p}$. For example,
$\vec A(t)=\vec O_{\mu(t)}^T\vec E(t)$ and hence $Z_{\vec A}(t)=\vec O_{\mu(t)}Z_{\vec E}(t)$
for all $t$. Then the covariance function of $Z_{\vec A}$ is given
by
\begin{alignat}{1}
\covarop_{\vec A}(s,t) & =\expect\{Z_{\vec A}(s)Z_{\vec A}^T(t)\}=\expect\{\vec O_{\mu(s)}Z_{\vec E}(s)Z_{\vec E}^T(t)\vec O_{\mu(t)}^{T}\}=\vec O_{\mu(s)}\covarop_{\vec E}(s,t)\vec O_{\mu(t)}^{T},\label{eq:relation-covariance-functions}
\end{alignat}
and consequently,
\[
\covarop_{\vec A}(s,t)=\sum_{k=1}^{\infty}\lambda_{k}\{\vec O_{\mu(s)}\phi_{\vec E,k}(s)\}\{\vec O_{\mu(t)}\phi_{\vec E,k}(t)\}^{T}.
\]
From the above calculation, we immediately see that $\lambda_{k}$
are also eigenvalues of $\covarop_{\vec A}$. Moreover, the eigenfunction
associated with $\lambda_{k}$ for $\covarop_{\vec A}$ is given by
\begin{equation}
\phi_{\vec A,k}(t)=\vec O_{\mu(t)}\phi_{\vec E,k}(t).\label{eq:relation-eigenfunctions}
\end{equation}
Also, the variable $\xi_{k}$ in (\ref{eq:intrinsic-KL-expansion})
and (\ref{eq:coordinate-KL-representation}) is the functional
principal component score for $Z_{\vec A}$ associated with $\phi_{\vec A,k}$,
as seen by $\int_{\tdomain}Z_{\vec A}^{T}(t)\phi_{\vec A,k}(t)\diffop\upsilon(t)=\int_{\tdomain}Z_{\vec E}^{T}(t)\vec O_{\mu(t)}^{T}\vec O_{\mu(t)}\phi_{\vec E,k}(t)\diffop\upsilon(t)=\int_{\tdomain}Z_{\vec E}^{T}(t)\phi_{\vec E,k}(t)\diffop\upsilon(t)$.
The following proposition summarizes the above results.
\begin{prop}[Invariance Principle]
	\label{prop:fpca-frame}Let $X$ be a $\manifold$-valued random process
	satisfying conditions \ref{cond:exist-mean-function} and \ref{cond:cut-locus}.
	Suppose $\vec{E}$ and $\vec{A}$ are measurable orthonormal frames that are continuous on
	a neighborhood of the image of $\mu$, and $Z_{\vec E}$ denotes the $\vec E$-coordinate
	log-process of $X$. Assume $\vec O_{p}$ is the unitary matrix continuously varying with $p$ such that $\vec A(p)=\vec O_{p}^T\vec E(p)$
	for $p\in\manifold$.
	\begin{enumerate}
		\setlength\itemsep{1.5mm}
		\item \label{enu:prop:fpca-frame-1}The $\lp[r]$-norm of $Z_{\vec E}$ for $r>0$,
		defined by $\lpnorm[r]{Z_{\vec E}}=\{\expect\int_{\tdomain}|Z_{\vec E}(t)|^{r}\diffop\upsilon(t)\}^{1/r}$,
		is independent of the choice of frames. In particular, $\|Z_{\vec E}\|_{\lp[2]}^{2}=\expect\vfnorm[][2]{\Log_{\mu}X}$
		for all orthonormal frames $\vec E$.
		\item \label{enu:prop:fpca-frame-2}If $\expect\vfnorm[][2]{\Log_{\mu}X}<\infty$,
		then the covariance function of $Z_{\vec E}$ exists for all $\vec E$
		and admits decomposition of (\ref{eq:coordinate-eigen-decomp}). Also,
		(\ref{eq:eigen-decomp-coop}) and (\ref{eq:coordinate-eigen-decomp})
		are related by $\phi_{k}(t)=\phi_{\vec E,k}^{T}(t)\vec E(\mu(t))$
		for all $t$, and the eigenvalues $\lambda_{k}$ coincide. Furthermore,
		the eigenvalues of $\covarop_{\vec E}$ and the principal component
		scores of Karhunen--Lo\`{e}ve expansion of $Z_{\vec E}$ do not depend
		on $\vec E$. 
		\item \label{enu:prop:fpca-frame-3}The covariance functions $\covarop_{\vec A}$
		and $\covarop_{\vec E}$ of respectively $Z_{\vec A}$ and $Z_{\vec E}$,
		if they exist, are related by (\ref{eq:relation-covariance-functions}).
		Furthermore, their eigendecomposions are related by (\ref{eq:relation-eigenfunctions})
		and $Z_{\vec A}(t)=\vec O_{\mu(t)}Z_{\vec E}(t)$ for all $t\in\tdomain$.
		\item \label{enu:prop:fpca-frame-4}If $\expect\vfnorm[][2]{\Log_{\mu}X}<\infty$ and
		sample paths of $X$ are continuous, then the scores $\xi_{k}$
		(\ref{eq:coordinate-KL-representation}) coincide with the iRFPC scores in
		(\ref{eq:intrinsic-KL-expansion}).
	\end{enumerate}
\end{prop}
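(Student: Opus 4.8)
The plan is to reduce every assertion to a single structural fact: because $\vec E$ is an orthonormal frame, the pointwise coordinate map $\coordop_{\vec E}\colon\mathscr{T}(\mu)\to\lp[2](\tdomain;\real^{\dim})$ sending a vector field $W$ to the $\real^{\dim}$-valued function $t\mapsto(\metric{W(t)}{E_1(\mu(t))}_{\mu(t)},\dots,\metric{W(t)}{E_{\dim}(\mu(t))}_{\mu(t)})$ is an isometric isomorphism of separable Hilbert spaces, and the change of frame $\vec E\mapsto\vec A$ acts on coordinates by pointwise multiplication by the orthogonal matrix $\vec O_{\mu(t)}$. Once this is in place, the four claims are the images, under these isometries, of statements already available for $\Log_{\mu}X$ in $\mathscr{T}(\mu)$ together with the standard vector-valued $\lp[2]$ theory of \citet{Hsing2015}.

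First I would record the pointwise identities that follow purely from orthonormality: for tangent vectors $u,w\in\tangentspace{\mu(t)}$ with $\vec E$-coordinates $a,b\in\real^{\dim}$ one has $\metric{u}{w}_{\mu(t)}=a^{T}b$, and in particular $|Z_{\vec E}(t)|^{2}=\metric{\Log_{\mu(t)}X(t)}{\Log_{\mu(t)}X(t)}_{\mu(t)}$. Integrating the latter against $\upsilon$ and taking expectation gives $\lpnorm[2][2]{Z_{\vec E}}=\expect\vfnorm[][2]{\Log_{\mu}X}$, and since $\vec O_{\mu(t)}$ is orthogonal, $|Z_{\vec A}(t)|=|\vec O_{\mu(t)}Z_{\vec E}(t)|=|Z_{\vec E}(t)|$ pointwise, whence $\lpnorm[r]{Z_{\vec A}}=\lpnorm[r]{Z_{\vec E}}$ for every $r>0$; this yields item \ref{enu:prop:fpca-frame-1}. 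The same inner-product identity proves item \ref{enu:prop:fpca-frame-4}: with $\phi_k=\phi_{\vec E,k}^{T}\vec E$ one has $\metric{\Log_{\mu(t)}X(t)}{\phi_k(t)}_{\mu(t)}=Z_{\vec E}^{T}(t)\phi_{\vec E,k}(t)$, so $\int_{\tdomain}Z_{\vec E}^{T}\phi_{\vec E,k}\diffop\upsilon=\vfinnerprod{\Log_{\mu}X,\phi_k}=\xi_k$, identifying the coordinate scores of (\ref{eq:coordinate-KL-representation}) with the iRFPC scores of (\ref{eq:intrinsic-KL-expansion}).

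For item \ref{enu:prop:fpca-frame-3} the covariance identity (\ref{eq:relation-covariance-functions}) is the direct computation displayed before the proposition, using $Z_{\vec A}(t)=\vec O_{\mu(t)}Z_{\vec E}(t)$; I would then verify (\ref{eq:relation-eigenfunctions}) by substituting $\phi_{\vec A,k}=\vec O_{\mu(\cdot)}\phi_{\vec E,k}$ into the eigen-equation for $\covarop_{\vec A}$ and cancelling $\vec O_{\mu(t)}^{T}\vec O_{\mu(t)}=I$ under the integral, which exhibits $\phi_{\vec A,k}$ as an eigenfunction with the \emph{same} eigenvalue $\lambda_k$. For item \ref{enu:prop:fpca-frame-2}, finiteness of $\expect\vfnorm[][2]{\Log_{\mu}X}=\lpnorm[2][2]{Z_{\vec E}}$ places $Z_{\vec E}$ in the vector-valued $\lp[2]$ space, so its covariance function exists and Mercer's theorem supplies (\ref{eq:coordinate-eigen-decomp}); because $\coordop_{\vec E}$ is an isometry carrying $\covarop$ to the integral operator with kernel $\covarop_{\vec E}$, the two operators are unitarily equivalent, their spectra agree, and $\phi_k=\phi_{\vec E,k}^{T}\vec E$ is the coordinate representation of the intrinsic eigenelement. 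Frame-independence of the $\lambda_k$ and of the scores is then exactly the conjugation relation of item \ref{enu:prop:fpca-frame-3}, since orthogonal conjugation preserves eigenvalues.

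The main obstacle I anticipate is not any single computation but the bookkeeping needed to make $\coordop_{\vec E}$ a bona fide isometric isomorphism: one must check joint measurability of $t\mapsto Z_{\vec E}(t)$ (using continuity of the frame on a neighborhood of the image of $\mu$ together with the measurability of $\Log_{\mu}X$ established in Section \ref{subsec:Random-Elements-on-THS}), confirm that $\coordop_{\vec E}$ lands in and is onto $\lp[2](\tdomain;\real^{\dim})$, and---most delicately---justify that the eigenfunctions $\phi_{\vec E,k}$ produced by vector Mercer correspond to the intrinsic eigenelements $\phi_k$ rather than merely spanning the same eigenspaces. This requires invoking the unitary equivalence at the level of operators, not just spectra, so that eigenprojections and hence scores are transported consistently; handling repeated eigenvalues cleanly is the one place where I would be most careful.
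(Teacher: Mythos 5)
Your proposal is correct and follows essentially the same route as the paper: the coordinate map you call $\coordop_{\vec E}$ is exactly the isometric isomorphism $\Upsilon$ already constructed in the proof of Theorem \ref{prop:hilbertian-vfc}, and the paper likewise proves Part \ref{enu:prop:fpca-frame-1} by the pointwise norm identity, Part \ref{enu:prop:fpca-frame-2} by checking that $\phi_{\vec E,k}$ satisfies the eigen-equation $\covarop_{\vec E}\phi_{\vec E,k}=\lambda_k\phi_{\vec E,k}$, Part \ref{enu:prop:fpca-frame-3} by the displayed conjugation computations preceding the proposition, and Part \ref{enu:prop:fpca-frame-4} by transporting the Karhunen--Lo\`eve expansion through this isomorphism (via Theorem 7.4.3 of \citet{Hsing2015}). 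The only difference is presentational: you make the unitary-equivalence skeleton explicit where the paper performs the concrete computations directly, and your caution about measurability and repeated eigenvalues matches the (brief) treatment the paper gives.
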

We conclude this subsection by emphasizing that the concept of covariance function of the log-process depends on the frame $\vec{E}$, while  the covariance operator, eigenvalues, eigenelements and iRFPC scores do not. In particular, the scores $\xi_k$, which are often the input for  further statistical analysis such as regression and classification,  are invariant to the choice of coordinate frames. An important consequence of the invariance principle is that, these scores can be safely computed in any convenient coordinate frame without altering the subsequent analysis.

\subsection{Connection to the special case of Euclidean submanifolds}\label{subsec:euclidean-submanifold}
\lincomment{Our framework applies to general manifolds that include Euclidean submanifolds as  special examples to which the methodology of  \cite{Dai2017} also applies. When the underlying manifold is a $d$-dimensional submanifold of the Euclidean space $\real^{d_0}$ with $d<d_0$, we recall that the tangent space at each point is identified as a $d$-dimensional linear subspace of $\real^{d_0}$.} For such Euclidean manifolds, \cite{Dai2017} treat the log-process of $X$ as a $\real^{d_0}$-valued random process, and derive the representation for the log-process (equation (5) in their paper) within the 
ambient Euclidean space. This is distinctly different from our
intrinsic representation \eqref{eq:intrinsic-V} based on the theory of tensor Hilbert spaces, despite their similar appearance. For instance, equation. (5) in their work can only be defined for Euclidean  submanifolds, while ours is applicable to general Riemannian manifolds. Similarly, the covariance function 
defined in \citet{Dai2017}, denoted by $C^{{DM}}(s,t)$, is associated with the  ambient log-process $V(t)\in \real^{d_0}$, that is, $C^{{DM}}(s,t)=\expect V(s)^TV(t)$. 
Such an ambient covariance function can only be defined for Euclidean submanifolds but not general manifolds. 

\lincomment{Nevertheless, there are connections between the ambient method of \cite{Dai2017} and our framework when  $\manifold$ is a Euclidean submanifold.  For instance, the mean curve is intrinsically  defined in the same way in both works. For the covariance structure, although  
	our covariance function  $\covarop_{\vec E}$ is a $d\times d$ matrix-valued function while $C^{{DM}}(s,t)$ is a $d_0\times d_0$ matrix-valued function, they both represent the intrinsic covariance operator when $\manifold$ is a Euclidean submanifold. To see so, first, we observe that the ambient log-process $V(t)$ as defined in \cite{Dai2017} at the time $t$, although is ambiently $d_0$-dimensional, lives in a $d$-dimensional linear subspace of $\real^{d_0}$. Second, the orthonormal basis $\vec E(t)$ for the tangent space at $\mu(t)$ can be realized by a $d_0\times d$ full-rank matrix $\vec G_t$ by concatenating vectors $E_1(\mu(t)),\ldots,E_d(\mu(t))$. Then $U(t)=\vec G_t^TV(t)$ is the $\vec E$-coordinate process of $X$. This implies that $\covarop_{\vec E}(s,t)=\vec G_s^TC^{{DM}}(s,t)\vec G_t$. On the other hand, since $V(t)=\vec G_tU(t)$, one has $C^{{DM}}(s,t)=\vec G_t\covarop_{\vec E}(s,t)\vec G_t^T$. Thus, $\covarop_{\vec E}$ and $C^{{DM}}$ determine each other and represent the same object. In light of this observation and the invariance principle  stated in Proposition \ref{prop:fpca-frame}, when $\manifold$ is a Euclidean submanifold, $C^{{DM}}$ can be viewed as the ambient representation of the intrinsic covariance operator $\covarop$, while $\covarop_{\vec E}$ is the coordinate representation of $\covarop$ with respect to the frame $\vec E$.  Similarly, the eigenfunctions $\phi_k^{DM}$ of $C^{DM}$ are the ambient representation of the eigenelements $\phi_k$ of $\covarop$. 
	The above reasoning also applies to sample mean functions and sample covariance structure. Specifically, when $\manifold$ is a Euclidean submanifold, our estimator for the mean function discussed in Section \ref{sec:Intrinsic-Representation} is  identical to the one in \cite{Dai2017}, while the estimators for the covariance function and eigenfunctions proposed in \cite{Dai2017} are the ambient representation of our estimators stated in Section \ref{sec:Intrinsic-Representation}.}

\lincomment{ 
	However, when quantifying the discrepancy between the population covariance structure  and its estimator, \citet{Dai2017} adopt the Euclidean difference as a measure. For instance, they use $\hat{\phi}^{DM}_k-\phi^{DM}_k$ to represent the discrepancy between the sample eigenfunctions and the population eigenfunctions, where $\hat{\phi}^{DM}_k$ is the sample version of $\phi_k^{DM}$. When $\hat{\mu}(t)$, the sample version of $\mu(t)$, is not equal to $\mu(t)$, $\hat{\phi}^{DM}_k(t)$ and ${\phi}^{DM}_k(t)$ belong to different tangent spaces. In such case, the Euclidean difference $\hat{\phi}^{DM}_k-\phi^{DM}_k$ is a Euclidean vector that does not belong to the tangent space at either $\hat{\mu}(t)$ or $\mu(t)$, as illustrated in the left panel of Figure \ref{fig:parallel-transport}. In other words, the Euclidean difference of ambient eigenfunctions does not obey the geometry of the manifold, hence might not properly measure the intrinsic discrepancy. In particular, the measure $\|\hat{\phi}^{DM}_k-\phi^{DM}_k\|_{\real^{d_0}}$ might  completely result from the departure of the ambient Euclidean geometry from the manifold, rather than the intrinsic discrepancy between the sample and population eigenfunctions, as demonstrated in the left panel of Figure \ref{fig:parallel-transport}. Similar reasoning applies to $\hat{C}^{DM}-C^{DM}$. In contrast,  we base on Proposition \ref{prop:property-Gamma-operator} to propose an intrinsic measure to characterize the intrinsic discrepancy between a population quantity and its estimator in Section \ref{subsec:Asymptotics-fpca}.}


\section{Intrinsic Riemannian functional principal component analysis\label{sec:Intrinsic-Representation}}

\subsection{Model and estimation\label{subsec:Estimation-fpca}}

Suppose $X$ admits the intrinsic Riemannian Karhunen--Lo\`{e}ve expansion
(\ref{eq:intrinsic-KL-expansion}), and $X_{1},\ldots,X_{n}$ are
a random sample of $X$. In the sequel, we assume that trajectories
$X_{i}$ are fully observed.  \lincomment{In the case that data are densely observed,
	each trajectory can be individually  interpolated 
	by using regression techniques for manifold valued data, such as \citet{Steinke2010},
	\citet{Cornea2017} and \citet{Petersen2017}. This way the densely observed data could be represented by their interpolated surrogates, and thus treated as if they were fully observed curves.} When data are sparse,
delicate information pooling of observations across different subjects
is required. The development of such methods is substantial and beyond
the scope of this paper.

In order to estimate the mean function $\mu$, we define the finite-sample version of $F$ in \eqref{eq:frechet-variances} by 
\[
F_{n}(p,t)=\frac{1}{n}\sum_{i=1}^{n}d_{\manifold}^{2}(X_{i}(t),p).
\]
Then, an estimator for $\mu$ is given by 
\[
\hat{\mu}(t)=\underset{p\in\manifold}{\arg\min}F_{n}(p,t).
\]
The computation of $\hat{\mu}$ depends on the Riemannian structure
of the manifold. Readers are referred to \citet{Cheng2016} and \citet{Salehian2015}
for practical algorithms. For a subset $A$ of $\manifold$,
$A^{\epsilon}$ denotes the set $\bigcup_{p\in A}B(p;\epsilon)$,
where $B(p;\epsilon)$ is the ball with center $p$ and radius $\epsilon$
in $\manifold$. We use $\mathrm{Im}^{-\epsilon}(\Exp_{\mu(t)})$
to denote the set $\manifold\backslash\{\manifold\backslash\mathrm{Im}(\Exp_{\mu(t)})\}^{\epsilon}$.
In order to define $\Log_{\hat{\mu}}X_{i}$, at least with a dominant
probability for a large sample, we shall assume a slightly stronger
condition than \ref{cond:cut-locus}:

\begin{description}[labelwidth=1.2cm,leftmargin=1.4cm,align=left]
	\item[\mylabel{cond:cut-locus-prime}{A.2$'$}] There is some constant $\epsilon_0>0$ such that $\prob\{\forall t\in\tdomain: X(t)\in\mathrm{Im}^{-\epsilon_0}(\Exp_{\mu(t)})\}=1$.
\end{description}Then, combining the fact  $\sup_{t}|\hat{\mu}(t)-\mu(t)|=o_{a.s.}(1)$ that we will show later,
we conclude that for a large sample, almost surely, $\mathrm{Im}^{-\epsilon}(\Exp_{\mu(t)})\subset\mathrm{Im}(\Exp_{\hat{\mu}(t)})$
for all $t\in\tdomain$. Therefore, under this condition, $\Log_{\hat{\mu}(t)}X_{i}(t)$
is well-defined almost surely for a large sample. 

The intrinsic Riemannian covariance operator is estimated by its finite-sample
version
\[
\hat{\covarop}=\frac{1}{n}\sum_{i=1}^{n}(\Log_{\hat{\mu}}X_{i})\otimes(\Log_{\hat{\mu}}X_{i}).
\]
This sample intrinsic Riemannian covariance operator also admits an
intrinsic eigendecomposion $\hat{\covarop}=\sum_{k=1}^{\infty}\hat{\lambda}_{k}\hat{\phi}_{k}\otimes\hat{\phi}_{k}$
for $\hat{\lambda}_{1}\geq\hat{\lambda}_{2}\geq\cdots\geq0$. Therefore,
the estimates for the eigenvalues $\lambda_{k}$ are given by $\hat{\lambda}_{k}$,
while the estimates for $\phi_{k}$ are given by $\hat{\phi}_{k}$.
These estimates can also be conveniently obtained under a frame, due to the invariance principle  stated in Proposition \ref{prop:fpca-frame}. Let $\vec E$
be a chosen orthonormal frame and $\hat{\covarop}_{\vec E}$ be the
sample covariance function based on $\hat{Z}_{\vec E,1},\ldots,\hat{Z}_{\vec E,n}$,
where $\hat{Z}_{\vec E,i}$ is the coordinate process of  $\Log_{\hat{\mu}(t)}X_{i}(t)$
under the frame $\vec E$ with respect to $\hat{\mu}$. We can then
obtain the eigendecomposition $\hat{\covarop}_{\vec E}(s,t)=\sum_{k=1}^{\infty}\hat{\lambda}_{k}\hat{\phi}_{\vec E,k}(s)\hat{\phi}_{\vec E,k}(t)^{T}$, which yields $\hat{\phi}_{k}(t)=\hat{\phi}_{\vec E,k}^{T}(t)\vec E(t)$ for
$t\in\tdomain$. Finally, the truncated
process for $X_{i}$ is estimated by
\begin{equation}
\hat{X}_{i}^{(K)}=\Exp_{\hat{\mu}}\sum_{k=1}^{K}\hat{\xi}_{ik}\hat{\phi}_{k},\label{eq:intrinsic-truncated-KL}
\end{equation}
where $\hat{\xi}_{ik}=\vfinnerprod[\hat{\mu}]{\Log_{\hat{\mu}}X_{i},\hat{\phi}_{k}}$
are estimated iRFPC scores. \rev{The above truncated iRKL expansion can be regarded as generalization of the representation (10) in \cite{Dai2017} from Euclidean submanifolds to general Riemannian manifolds.}


\subsection{Asymptotic properties\label{subsec:Asymptotics-fpca}}

To quantify the difference between $\hat{\mu}$ and $\mu$, it is natural to use the square geodesic distance
$d_{\manifold}(\hat{\mu}(t),\mu(t))$ as a measure of discrepancy. For
the asymptotic properties of $\hat{\mu}$, we need the following regularity
conditions.

\begin{description}[labelwidth=1.2cm,leftmargin=1.4cm,align=left]
	\setlength\itemsep{1.5mm}
	\item[\mylabel{cond:manifold-property}{B.1}] 
	
	The manifold $\manifold$ is connected and  complete. In addition, the exponential map $\Exp_p:\tangentspace p\rightarrow\manifold$ is surjective at every point $p\in\manifold$. %
	
	\item[\mylabel{cond:sample-path-continuity}{B.2}] 
	
	The sample paths of $X$ are continuous.
	
	\item[\mylabel{cond:uniform-second-moment}{B.3}]
	
	$F$ is finite. Also, for all compact subsets $\mathcal{K}\subset\manifold$,
	$\sup_{t\in\tdomain}\sup_{p\in\mathcal{K}}\expect d_{\manifold}^2(p,X(t))<\infty$. 
	
	\item[\mylabel{cond:boundedness-mean}{B.4}] 
	
	The image $\mathcal{U}$ of the mean function $\mu$ is bounded, that is, the diameter is finite, 
	$\mathrm{diam}(\mathcal{U})<\infty$.
	
	\item[\mylabel{cond:separability}{B.5}] 
	
	For all $\epsilon>0$, $\inf_{t\in\tdomain}\inf_{p:d_{\manifold}(p,\mu(t))\geq\epsilon}F(p,t)-F(\mu(t),t)>0$.
\end{description}
To state the next condition, let $V_{t}(p)=\Log_{p}X(t)$. The calculus of manifolds suggests that
$V_{t}(p)=-d_{\manifold}(p,X(t))\,\mathrm{grad}_{p}d_{\manifold}(p,X(t))=\mathrm{grad}_{p}(-d_{\manifold}^{2}(p,X(t))/2)$, where $\mathrm{grad}_p$ denotes the gradient operator at $p$.
For each $t\in\tdomain$, let $H_{t}$  denote the Hessian of the real function  $d_{\manifold}^{2}(\,\cdot\,,X(t))/2$,
that is, for vector fields $U$ and $W$ on $\manifold$,
\[
\metric{H_{t}U}W(p)=\metric{-\nabla_{U}V_{t}}W(p)=\mathrm{Hess}_{p}\left(\frac{1}{2}d_{\manifold}^{2}(p,X(t))\right)(U,W).
\]
\begin{description}[labelwidth=1.2cm,leftmargin=1.4cm,align=left]
	\setlength\itemsep{1.5mm}
	\item[\mylabel{cond:isolation-minima}{B.6}] 
	
	$\inf_{t\in\tdomain}\{\lambda_{\min}(\expect H_{t})\}>0$, where $\lambda_{\min}(\cdot)$ denotes the smallest eigenvalue of an operator or matrix.
	
	\item[\mylabel{cond:Lipschitz-moments}{B.7}] 
	
	$\expect L(X)^{2}<\infty$ and $L(\mu)<\infty$, where $L(f)\define\sup_{s\neq t}d_{\manifold}(f(s),f(t))/|s-t|$
	for a real function $f$ on $\manifold$.
	
\end{description}
The assumption \ref{cond:manifold-property} regarding the property
of manifolds is met in general, for example, the $\dim$-dimensional unit
sphere $\mathbb{S}^{\dim}$, SPD manifolds, etc. By the Hopf--Rinow theorem, the condition also implies that $\manifold$ is geodesically complete. Conditions similar to  \ref{cond:sample-path-continuity},
\ref{cond:separability}, \ref{cond:isolation-minima} and \ref{cond:Lipschitz-moments} are made
in \citet{Dai2017}. The condition \ref{cond:boundedness-mean}
is a weak requirement for the mean function and is automatically satisfied
if the manifold is compact, while \ref{cond:uniform-second-moment}
is analogous to standard moment conditions in the literature of Euclidean
functional data analysis and becomes superfluous when $\manifold$
is compact. If $\manifold=\real$,
then \ref{cond:uniform-second-moment} is equivalent to $\sup_{t\in\tdomain}\expect|X(t)|^{2}<\infty$, a condition commonly made in the literature
of functional data analysis, for example, \citet{Hall2006}. 
The following theorem summarizes asymptotic properties of $\hat{\mu}$ for \emph{general} Riemannian manifolds. \rev{Its proof can be obtained by mimicking the techniques in \cite{Dai2017}, with additional care of the case that $\manifold$ is noncompact. For completeness, we provide its proof in the Appendix. As noted by \cite{Dai2017}, the root-$n$ rate can not be improved in general.}
\begin{thm}
	\label{thm:property-mean-function} Under  conditions
	\ref{cond:exist-mean-function}, \ref{cond:cut-locus-prime} {and \ref{cond:manifold-property}}-\ref{cond:boundedness-mean}, the following holds. 
	\begin{enumerate}
		\setlength\itemsep{1.5mm}
		\item \label{enu:thm:mean-curve-1} $\mu$ is uniformly continuous, and $\hat{\mu}$ is uniformly continuous
		with probability one.
		\item \label{enu:thm:mean-curve-2}$\hat{\mu}$ is a uniformly strong consistent estimator for $\mu$,
		that is, $\sup_{t}d_{\manifold}(\hat{\mu}(t),\mu(t))=\oas(1)$.
		\item \label{enu:thm:mean-curve-3}If additional conditions  \ref{cond:separability}-\ref{cond:Lipschitz-moments}
		are assumed, then $\sqrt{n}\,\Log_{\mu}\hat{\mu}$ converges in distribution to a Gaussian measure on the tensor Hilbert space $\mathscr{T}(\mu)$.
		\item \label{enu:thm:mean-curve-4}If additional conditions \ref{cond:separability}-\ref{cond:Lipschitz-moments}
		are assumed, then  $\sup_{t\in\tdomain}d_{\manifold}^2(\hat{\mu}(t),\mu(t))=\Op(n^{-1})$ and $\int_{\tdomain}d_{\manifold}^2(\hat{\mu}(t),\mu(t))\diffop \upsilon(t)=\Op(n^{-1})$.
	\end{enumerate}
\end{thm}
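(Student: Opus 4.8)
The plan is to treat $\hat{\mu}(t)$ as an M-estimator (equivalently a Z-estimator) of the Fr\'echet mean, pointwise in $t$, and then upgrade every statement to hold uniformly over the compact domain $\tdomain$, taking extra care to confine the estimators to a compact region of $\manifold$ when the manifold is noncompact. For parts 1 and 2, I would first establish an equicontinuity estimate for $F$: from $|d_{\manifold}^{2}(X(s),p)-d_{\manifold}^{2}(X(t),p)|\le d_{\manifold}(X(s),X(t))\{d_{\manifold}(X(s),p)+d_{\manifold}(X(t),p)\}$ together with Cauchy--Schwarz, sample-path continuity \ref{cond:sample-path-continuity}, and the uniform moment bound \ref{cond:uniform-second-moment}, one gets $\sup_{p\in\mathcal{K}}|F(p,s)-F(p,t)|\to0$ as $s\to t$ on any compact $\mathcal{K}$, and the same for $F_{n}$ almost surely. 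Next I would prove \emph{confinement}: since $d_{\manifold}(X(t),p)\ge d_{\manifold}(p,p_{0})-d_{\manifold}(X(t),p_{0})$, the value $F(p,t)$ grows quadratically in $d_{\manifold}(p,p_{0})$ for $p$ far from a reference $p_{0}$, so by \ref{cond:uniform-second-moment} and \ref{cond:boundedness-mean} both $\mu(t)$ and (via the strong law) $\hat{\mu}(t)$ lie, uniformly in $t$ and almost surely for large $n$, in a fixed bounded set, which is compact by Hopf--Rinow under \ref{cond:manifold-property}. Uniform continuity of $\mu$ (resp.\ $\hat{\mu}$) then follows by a subsequence/contradiction argument: any violation would produce, along a convergent subsequence, a second minimizer of $F(\cdot,t^{\ast})$, contradicting the uniqueness built into \ref{cond:exist-mean-function}. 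For part 2 I would prove a uniform strong law $\sup_{t\in\tdomain,\,p\in\mathcal{K}}|F_{n}(p,t)-F(p,t)|=\oas(1)$ over the compact index set $\tdomain\times\mathcal{K}$ (via the equicontinuity above and a covering argument), and combine it with uniform well-separation of the minima---itself a consequence of uniqueness, uniform continuity of $\mu$, and compactness---to conclude $\sup_{t}d_{\manifold}(\hat{\mu}(t),\mu(t))=\oas(1)$ by the standard argmin consistency theorem.

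For part 3 I would pass to the score (Z-estimation) formulation. Since $V_{t}(p)=\Log_{p}X(t)=\mathrm{grad}_{p}(-d_{\manifold}^{2}(p,X(t))/2)$, the first-order condition for $\hat{\mu}(t)$ reads $n^{-1}\sum_{i}\Log_{\hat{\mu}(t)}X_{i}(t)=0$, while at the population level $\expect\Log_{\mu(t)}X(t)=0$. Writing $\Psi_{n}(p)=n^{-1}\sum_{i}\Log_{p}X_{i}$ and $\Psi(p)=\expect\Log_{p}X$, and using $\nabla_{U}V_{t}=-H_{t}U$ from the definition of the Hessian $H_{t}$, I would Taylor-expand the parallelly transported score along the minimizing geodesic from $\mu(t)$ to $\hat{\mu}(t)$ to obtain, uniformly in $t$,
\[
0=\Psi_{n}(\mu(t))-(\expect H_{t})\,\Log_{\mu(t)}\hat{\mu}(t)+R_{n}(t),
\]
where $R_{n}(t)$ collects the linearization remainder and the fluctuation of the empirical Hessian field about $\expect H_{t}$. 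Condition \ref{cond:isolation-minima} makes $\expect H_{t}$ uniformly invertible as a bounded operator on $\mathscr{T}(\mu)$, giving $\sqrt{n}\,\Log_{\mu}\hat{\mu}=(\expect H_{\cdot})^{-1}\sqrt{n}\,\Psi_{\cdot}(\mu)+\op(1)$. The leading term is $(\expect H_{\cdot})^{-1}$ applied to $n^{-1/2}\sum_{i}\Log_{\mu}X_{i}$, a normalized sum of i.i.d.\ mean-zero square-integrable ($\expect\vfnorm[][2]{\Log_{\mu}X}<\infty$) random elements of the separable Hilbert space $\mathscr{T}(\mu)$; the Hilbert-space central limit theorem yields a Gaussian limit, preserved under the bounded operator, which proves part 3.

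For part 4 the integrated rate is immediate from part 3: since $d_{\manifold}(\mu(t),\hat{\mu}(t))=|\Log_{\mu(t)}\hat{\mu}(t)|$, the weak limit in $\mathscr{T}(\mu)$ gives $\int_{\tdomain}d_{\manifold}^{2}(\hat{\mu}(t),\mu(t))\,\diffop\upsilon(t)=\vfnorm[][2]{\Log_{\mu}\hat{\mu}}=\Op(n^{-1})$. For the supremum rate I would run the expansion uniformly in $t$ and bound $\sup_{t}|\Log_{\mu(t)}\hat{\mu}(t)|$ by $\sup_{t}\|(\expect H_{t})^{-1}\|$ (finite by \ref{cond:isolation-minima}) times $\sup_{t}|n^{-1}\sum_{i}\Log_{\mu(t)}X_{i}(t)|$ plus the uniform remainder; the Lipschitz-moment condition \ref{cond:Lipschitz-moments} makes $t\mapsto\Log_{\mu(t)}X(t)$ Lipschitz with square-integrable constant, so a maximal inequality yields $\sup_{t}|n^{-1}\sum_{i}\Log_{\mu(t)}X_{i}(t)|=\Op(n^{-1/2})$ and hence $\sup_{t}d_{\manifold}^{2}(\hat{\mu}(t),\mu(t))=\Op(n^{-1})$.

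The main obstacle is the rigorous, uniform-in-$t$ control of the remainder $R_{n}(t)$ in the manifold Taylor expansion---that is, establishing stochastic equicontinuity of the empirical Hessian field $p\mapsto n^{-1}\sum_{i}(-\nabla V_{t})(p)$ around the random argument $\hat{\mu}(t)$, and bounding the second-order geometric terms (which involve the curvature of $\manifold$) uniformly over $\tdomain$. This is precisely where \ref{cond:isolation-minima} supplies the non-degeneracy of the linearization and \ref{cond:Lipschitz-moments} supplies the equicontinuity and maximal-inequality input, and where the noncompact case forces the extra confinement argument that keeps all of the geometry on a single fixed compact set.
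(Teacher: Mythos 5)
Your proposal is correct and follows essentially the same route as the paper: uniform consistency via a uniform SLLN for $F_n$ over compacts combined with confinement to a compact set and well-separation of the minimizer (the paper's Lemmas \ref{lem:sup-sup-F-n-F} and \ref{lem:neighbor-tube-mu-hat}), and then a Taylor expansion of the parallelly transported score field around $\mu(t)$ with uniform control of the Hessian remainder (the paper's Lemma \ref{lem:delta-operator}) followed by a Hilbert-space CLT. The only cosmetic difference is in Part \ref{enu:thm:mean-curve-4}, where you derive the integrated rate directly from the weak limit and the sup rate via a maximal inequality under \ref{cond:Lipschitz-moments}, whereas the paper obtains the sup rate first from the uniform expansion and then integrates; both orderings are sound.
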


For asymptotic analysis of the estimated eigenstructure, as discussed in Section \ref{subsec:Tensor-Hilbert-Spaces-along-cruves} and \ref{subsec:euclidean-submanifold},
$\hat{\covarop}-\covarop$ and $\hat{\phi}_{k}-\phi_{k}$ are not
defined for intrinsic Riemannian functional data analysis, since they are objects
originated from different tensor Hilbert spaces $\mathscr{T}(\mu)$ and $\mathscr{T}(\hat{\mu})$
induced by \emph{different curves} $\mu$ and $\hat{\mu}$. Therefore,
we shall adopt the intrinsic measure of discrepancy developed in Section
\ref{subsec:Tensor-Hilbert-Spaces-along-cruves}, namely, $\hat{\covarop}\ominus_{\Phi}\covarop$
and $\hat{\phi}_k\ominus_{\Gamma}\phi_k$, where the dependence of $\Phi$ and $\Gamma$ on $\hat{\mu}$ and $\mu$ is suppressed for simplicity. \rev{As mentioned at the end of Section \ref{subsec:Tensor-Hilbert-Spaces-along-cruves}, both $\Gamma$ and $\Phi$ depend on the choice of the family of curves $\gamma$. Here, a canonical choice for each $\gamma(t,\cdot)$ is the minimizing geodesic between $\mu(t)$ and $\hat{\mu}(t)$. The existence and uniqueness of such geodesics can be deduced from  assumptions \ref{cond:exist-mean-function} and \ref{cond:cut-locus-prime}. Also, the continuity of $\mu(t)$ and $\hat{\mu}(t)$ implies the continuity of $\gamma(\cdot,\cdot)$ and hence the measurability of $\gamma(\cdot,s)$ for each $s\in[0,1]$.} By Proposition \ref{prop:property-Gamma-operator},
one sees that $\Phi\hat{\covarop}=n^{-1}\sum_{i=1}^n(\Gamma\hat{V}_{i}\otimes\Gamma\hat{V}_{i})$,
recalling that $\hat{V}_{i}=\Log_{\hat{\mu}}X_{i}$ is a vector field along $\hat{\mu}$. It can also be seen that $(\hat{\lambda}_{k},\Gamma\hat{\phi}_{k})$ are eigenpairs
of $\Phi\hat{\covarop}$. These identities match our intuition that
the transported sample covariance operator ought to be an operator
derived from transported sample vector fields, and that the eigenfunctions
of the transported operator are identical to the transported
eigenfunctions. 

To state the asymptotic properties for the  eigenstructure, we define
\[
\eta_{k}=\min_{1\leq j\leq k}(\lambda_{j}-\lambda_{j+1}),\qquad J=\inf\{j\geq1:\lambda_{j}-\lambda_{j+1}\leq2\opnorm[\mu]{\hat{\covarop}\ominus_{\Phi}\covarop}\},
\]
\[
\hat{\eta}_{j}=\min_{1\leq j\leq k}(\hat{\lambda}_{j}-\hat{\lambda}_{j+1}),\qquad\hat{J}=\inf\{j\geq1:\hat{\lambda}_{j}-\hat{\lambda}_{j+1}\leq2\opnorm[\mu]{\hat{\covarop}\ominus_{\Phi}\covarop}\}.
\]
\begin{thm}
	\label{thm:explict-bound-eigenfunction}Assume that every eigenvalue
	$\lambda_{k}$ has multiplicity one, and {conditions
		\ref{cond:exist-mean-function}, }\ref{cond:cut-locus-prime}{{}
		and \ref{cond:manifold-property}}-\ref{cond:Lipschitz-moments} hold. Suppose tangent vectors are parallel transported along minimizing geodesics for defining the parallel transporters $\Gamma$ and $\Phi$. If
	$\expect\vfnorm[][4]{\Log_{\mu}X}<\infty$,
	then $\opnorm[\mu][2]{\hat{\covarop}\ominus_{\Phi}\covarop}=\Op(n^{-1})$.
	Furthermore, $\sup_{k\geq1}|\hat{\lambda}_{k}-\lambda_{k}|\leq\opnorm[\mu]{\hat{\covarop}\ominus_{\Phi}\covarop}$
	and for all $1\leq k\leq J-1$,
	\begin{equation}
	\vfnorm[][2]{\hat{\phi}_{k}\ominus_{\Gamma}\phi_{k}}\leq8\opnorm[\mu][2]{\hat{\covarop}\ominus_{\Phi}\covarop}/\eta_{k}^{2}.\label{eq:bound-eigenfunction-1}
	\end{equation}
	If $(J,\eta_{j})$ is replaced by $(\hat{J},\hat{\eta}_{j})$, then
	(\ref{eq:bound-eigenfunction-1}) holds with probability 1.
\end{thm}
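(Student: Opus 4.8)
The plan is to first establish the rate $\opnorm[\mu][2]{\hat{\covarop}\ominus_{\Phi}\covarop}=\Op(n^{-1})$ for the transported covariance discrepancy, and then to deduce the eigenvalue and eigenfunction bounds by classical Hilbert--space perturbation theory carried out entirely within the single space $\mathscr{T}(\mu)$. For the rate, I would introduce the oracle operator $\tilde{\covarop}=n^{-1}\sum_{i=1}^{n}V_{i}\otimes V_{i}$ built from the population log-process $V_{i}=\Log_{\mu}X_{i}$, which already lives on $\mathscr{T}(\mu)$, and split
\[
\Phi\hat{\covarop}-\covarop=\bigl(\Phi\hat{\covarop}-\tilde{\covarop}\bigr)+\bigl(\tilde{\covarop}-\covarop\bigr).
\]
The second bracket is the ordinary deviation of a sample covariance of i.i.d.\ centered Hilbert-space elements from its mean; under $\expect\vfnorm[][4]{\Log_{\mu}X}<\infty$ its squared Hilbert--Schmidt norm has expectation $O(n^{-1})$, hence is $\Op(n^{-1})$.

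For the first bracket I would use the representation $\Phi\hat{\covarop}=n^{-1}\sum_{i}(\Gamma\hat{V}_{i})\otimes(\Gamma\hat{V}_{i})$ recorded before the theorem, together with the identity
\[
(\Gamma\hat{V}_{i})\otimes(\Gamma\hat{V}_{i})-V_{i}\otimes V_{i}=(\Gamma\hat{V}_{i}-V_{i})\otimes(\Gamma\hat{V}_{i})+V_{i}\otimes(\Gamma\hat{V}_{i}-V_{i}),
\]
so that, using that $\Gamma=\Gamma_{\hat{\mu},\mu}$ is unitary (Proposition~\ref{prop:property-Gamma-operator}),
\[
\opnorm[\mu]{\Phi\hat{\covarop}-\tilde{\covarop}}\le\frac{1}{n}\sum_{i=1}^{n}\vfnorm{\Gamma\hat{V}_{i}-V_{i}}\bigl(\vfnorm{\Gamma\hat{V}_{i}}+\vfnorm{V_{i}}\bigr).
\]
Here $\Gamma$ transports along the minimizing geodesic joining $\hat{\mu}(t)$ to $\mu(t)$, whose existence and uniqueness follow from \ref{cond:exist-mean-function} and \ref{cond:cut-locus-prime}. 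The crux is a pointwise Lipschitz estimate of the form
\[
\bigl|\mathcal{P}_{\hat{\mu}(t),\mu(t)}\Log_{\hat{\mu}(t)}X_{i}(t)-\Log_{\mu(t)}X_{i}(t)\bigr|_{\mu(t)}\le C\,\ell_{i}(t)\,d_{\manifold}(\hat{\mu}(t),\mu(t)),
\]
obtained by differentiating the joint map $(p,q)\mapsto\mathcal{P}_{p,q}\Log_{p}q$ and controlling the resulting Jacobi fields through curvature comparison, with a multiplier $\ell_{i}(t)$ growing at most linearly in $|\Log_{\mu(t)}X_{i}(t)|$. Integrating over $\tdomain$ gives $\vfnorm{\Gamma\hat{V}_{i}-V_{i}}\le C\sup_{t}d_{\manifold}(\hat{\mu}(t),\mu(t))\,\vfnorm{L_{i}}$ with $\vfnorm{L_{i}}$ controlled in $\mathcal{L}^{2}$; then Cauchy--Schwarz together with $\sup_{t}d_{\manifold}(\hat{\mu}(t),\mu(t))=\Op(n^{-1/2})$ from Theorem~\ref{thm:property-mean-function}(\ref{enu:thm:mean-curve-4}) yields $\opnorm[\mu]{\Phi\hat{\covarop}-\tilde{\covarop}}=\Op(n^{-1/2})$, so its square is $\Op(n^{-1})$.

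Granting the rate, the remaining assertions are routine. Since $\Phi\hat{\covarop}$ and $\covarop$ are self-adjoint compact operators on $\mathscr{T}(\mu)$ with $(\hat{\lambda}_{k},\Gamma\hat{\phi}_{k})$ the eigenpairs of the former, Weyl's inequality gives $\sup_{k}|\hat{\lambda}_{k}-\lambda_{k}|\le\opnorm[\mu]{\hat{\covarop}\ominus_{\Phi}\covarop}$, because the operator norm is dominated by the Hilbert--Schmidt norm. For the eigenelements I would invoke the standard perturbation bound for simple eigenvalues of self-adjoint compact operators in a separable Hilbert space (of the type developed in the covariance-operator theory of \citealt{Hsing2015}), applied to $\Gamma\hat{\phi}_{k}$ and $\phi_{k}$ on $\mathscr{T}(\mu)$: the definition of $J$ ensures that for $1\le k\le J-1$ the gaps $\lambda_{j}-\lambda_{j+1}$ exceed twice the perturbation, which is exactly what produces the factor $8/\eta_{k}^{2}$ and the bound $\vfnorm[][2]{\hat{\phi}_{k}\ominus_{\Gamma}\phi_{k}}=\vfnorm[][2]{\Gamma\hat{\phi}_{k}-\phi_{k}}\le8\opnorm[\mu][2]{\hat{\covarop}\ominus_{\Phi}\covarop}/\eta_{k}^{2}$. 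This last inequality is purely deterministic given the two operators, so replacing the population pair $(J,\eta_{j})$ by its sample analogue $(\hat{J},\hat{\eta}_{j})$ reruns the same argument on each realization and yields the stated bound with probability one.

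The genuine obstacle is the pointwise Lipschitz estimate for the transported logarithm difference together with its moment control: it requires quantitative Riemannian geometry (Jacobi-field and curvature-comparison bounds on the joint dependence of $\mathcal{P}_{p,q}\Log_{p}X$ on the base points), and since $\manifold$ may be noncompact one must keep the constant $C$ and the multiplier $\ell_{i}(t)$ bounded on the relevant region---compact by \ref{cond:boundedness-mean} and the strong consistency of $\hat{\mu}$---while keeping the dependence on $|\Log_{\mu(t)}X_{i}(t)|$ mild enough to be absorbed by $\expect\vfnorm[][4]{\Log_{\mu}X}<\infty$ and \ref{cond:Lipschitz-moments}.
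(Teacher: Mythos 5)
Your proposal follows essentially the same route as the paper: it decomposes $\Phi\hat{\covarop}-\covarop$ into the oracle deviation $\tilde{\covarop}-\covarop$ (handled by the fourth-moment bound) plus transport-discrepancy terms controlled by $\vfnorm{\Gamma\hat{V}_i-V_i}\lesssim \sup_t d_{\manifold}(\hat{\mu}(t),\mu(t))$ times a Hessian-type multiplier, and then invokes Bosq-style perturbation bounds on $\mathscr{T}(\mu)$ for the eigenvalues and eigenelements. The Lipschitz estimate you flag as the main obstacle is exactly what the paper obtains from the Taylor expansion \eqref{eq:mean-pf-0} with the Hessian $H_t$ and the remainder controlled in Lemma \ref{lem:delta-operator}, so the argument is correct and matches the paper's.
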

In this theorem, \eqref{eq:bound-eigenfunction-1}  generalizes Lemma 4.3 of \citet{Bosq2000} to the Riemannian setting. Note that the intrinsic rate for $\hat{\covarop}$ is optimal. Also, from \eqref{eq:bound-eigenfunction-1} one can deduce the optimal rate $\vfnorm[][2]{\hat{\phi}_{k}\ominus_{\Gamma}\phi_{k}}=\Op(n^{-1})$ for a fixed $k$. We stress that these results apply to not only Euclidean submanifolds, but also \emph{general} Riemannian manifolds.

\section{Intrinsic Riemannian functional linear regression\label{sec:Intrinsic-Regression}}

\subsection{Regression model and estimation}

Classical functional linear regression for Euclidean functional
data is well studied in the literature, that is, the model relating a scalar response $Y$ and a functional
predictor $X$ by $Y=\alpha+\int_{\tdomain}X(t)\beta(t)\diffop\upsilon(t)+\varepsilon$,
where $\alpha$ is the intercept, $\beta$ is the slope function and
$\varepsilon$ represents measurement errors, for example, \citet{Cardot2003},
\citet{Cardot2007}, \citet{Hall2007c} and \citet{Yuan2010}, among others. However,
for Riemannian functional data, both $X(t)$ and $\beta(t)$ take
values in a manifold and hence the product $X(t)\beta(t)$ is not
well defined. Rewriting the model as $Y=\alpha+\vfinnerprod[\ltwo]{X,\beta}+\varepsilon$,
where $\vfinnerprod[\ltwo]{\cdot,\cdot}$ is the canonical inner product
of the $\ltwo$ square integrable functions, we propose to replace  $\vfinnerprod[\ltwo]{\cdot,\cdot}$
by the inner product on the tensor Hilbert space $\mathscr{T}(\mu)$,
and define the following Riemannian functional linear regression model: 
\begin{equation}
Y=\alpha+\vfinnerprod[\mu]{\Log_{\mu}X,\Log_{\mu}\beta}+\varepsilon,\label{eq:mFLR}
\end{equation}
where we require conditions \ref{cond:exist-mean-function} and \ref{cond:cut-locus}.
Note that $\beta$ is a manifold valued function defined on $\tdomain$, namely the \emph{Riemannian slope function} of the model \eqref{eq:mFLR}, and this model is linear in terms of $\Log_{\mu(t)}\beta(t)$.
We stress that the model (\ref{eq:mFLR}) is intrinsic to the Riemannian
structures of the manifold.

According to Theorem 2.1 of \citet{Bhattacharya2003}, the process
$\Log_{\mu(t)}X(t)$ is centered at its mean function, that is, $\expect\Log_{\mu(t)}X(t)\equiv0$,
which also implies that $\alpha=\expect Y$. Since the focus is the
Riemannian slope function $\beta$, in the sequel, without loss of generality, we assume
that $Y$ is centered and hence $\alpha=0$. In practice, we may use sample mean of $Y$ for centering. With the same reasoning as \citet{Cardot2003},
one can see that $\Log_{\mu}\beta$ and hence $\beta$ are identifiable
if \ref{cond:exist-mean-function}, \ref{cond:cut-locus} and the
following condition hold. 

\begin{description}[labelwidth=1.2cm,leftmargin=1.4cm,align=left]
	\item[\mylabel{cond:flr-identifiability}{C.1}] 
	
	The random pair $(X,Y)$ satisfies $\sum_{k=1}^{\infty}\lambda_{k}^{-2}[\expect\{Y\vfinnerprod[\mu]{\Log_{\mu}X,\phi_{k}}\}]^{2}<\infty$,
	where $(\lambda_{k},\phi_{k})$ are eigenpairs of the covariance operator
	$\covarop$ of $\Log_{\mu}X$.
\end{description}

To estimate the Riemannian slope function $\beta$, by analogy to
the FPCA approach in traditional functional linear regression \citep{Hall2007c},
we represent $\beta$ by $\beta=\Exp_{\mu}\sum_{k}b_{k}\phi_{k}$.
Then $b_{k}=\lambda_{k}^{-1}a_{k}$ with $a_{k}=\vfinnerprod{\chi,\phi_{k}}$
and $\chi(t)=\expect\{Y\Log_{\mu(t)}X(t)\}$. The empirical iRFPCA estimate
of $\beta$ is then given by 
\begin{equation}
\hat{\beta}=\Exp_{\hat{\mu}}\sum_{k=1}^{K}\hat{b}_{k}\hat{\phi}_{k},\label{eq:PCR-estimator}
\end{equation}
where $\hat{b}_{k}=\hat{\lambda}_{k}^{-1}\hat{a}_{k}$, $\hat{a}_{k}=\vfinnerprod[\hat{\mu}]{\hat{\chi},\hat{\phi}_{k}}$,
$\hat{\chi}(t)=n^{-1}\sum_{i=1}^{n}(Y_{i}-\bar{Y})\Log_{\hat{\mu}(t)}X_{i}(t)$
and $\bar{Y}=n^{-1}\sum_{i=1}^{n}Y_{i}$. We can also obtain a Tikhonov
estimator $\tilde{\beta}$ as follows. For $\rho>0$, define $\hat{\covarop}^{+}=(\hat{\covarop}+\rho I_{\hat{\mu}})^{-1}$,
where $I_{\hat{\mu}}$ is the identity operator on $\mathscr{T}(\hat{\mu})$.
The Tikhonov estimator is given by 
\begin{equation}
\tilde{\beta}=\Exp_{\hat{\mu}}(\hat{\covarop}^{+}\hat{\chi}).\label{eq:Tikhonov-estimator}
\end{equation}

In practice, it is convenient to conduct computation with respect to an
orthonormal frame $\vec E$. For each $X_{i}$, the $\vec E$-coordinate
along the curve $\hat{\mu}$ is denoted by $\hat{Z}_{\vec E,i}$. Note
that by Theorem 2.1 of \citet{Bhattacharya2003}, $n^{-1}\sum_{i=1}^{n}\Log_{\hat{\mu}}X_{i}=0$
implies that $n^{-1}\sum_{i=1}^{n}\hat{Z}_{\vec E,i}(t)\equiv0$.
Thus, the empirical covariance function $\hat{\covarop}_{\vec E}$
is computed by $\hat{\covarop}_{\vec E}(s,t)=n^{-1}\sum_{i=1}^{n}\hat{Z}_{\vec E,i}(s)\hat{Z}^{T}_{\vec E,i}(t)$.
Then, $\hat{\chi}_{\vec E}=n^{-1}\sum_{i=1}^{n}(Y_{i}-\bar{Y})\hat{Z}_{\vec E,i}$
and $\hat{a}_{k}=\int_{\tdomain}\hat{\chi}^T_{\vec E,i}(t)\hat{\phi}_{\vec E,k}(t)\diffop\upsilon(t)$,
recalling that $\hat{\phi}_{\vec E,k}$ are eigenfunctions of $\hat{\covarop}_{\vec E}$.
Finally, the $\vec E$-coordinate of $\Log_{\hat{\mu}}\hat{\beta}$
is given by $\sum_{k=1}^{K}\hat{b}_{k}\hat{\phi}_{\vec E,k}$. For
the Tikhonov estimator, we first observe that $\hat{\covarop}^{+}$
shares the same eigenfunctions with $\hat{\covarop}$, while the eigenvalues
$\hat{\lambda}_{k}^{+}$ of $\hat{\covarop}^{+}$ is related to $\hat{\lambda}_{k}$
by $\hat{\lambda}_{k}^{+}=1/(\hat{\lambda}_{k}+\rho)$. Therefore,
$\hat{\covarop}^{+}\hat{\chi}=\sum_{k=1}^{\infty}\hat{\lambda}_{k}^{+}\vfinnerprod[\hat{\mu}]{\hat{\phi}_{k},\hat{\chi}}\hat{\phi}_{k}$
and its $\vec E$-coordinate is given by $\sum_{k=1}^{\infty}\hat{\lambda}_{k}^{+}\hat{a}_{k}\hat{\phi}_{\vec E,k}$. We emphasize that both estimators $\hat{\beta}$ and $\tilde{\beta}$ are intrinsic and hence independent of frames, while their coordinate representations depend on the choice of frames. \lincomment{In addition, as a special case,  when $\manifold$ is a Euclidean submanifold, an argument similar to that in Section \ref{subsec:euclidean-submanifold} can show that, if one treats $X$ as an ambient random process and adopts the FPCA and Tikhonov regularization approaches \citep{Hall2007c} to estimate the slope function $\beta$ in the ambient space,  then  the  estimates are the ambient representation of our estimates $\Log_{\hat\mu} \hat\beta$ and $\Log_{\hat\mu}\tilde{\beta}$ in \eqref{eq:PCR-estimator} and \eqref{eq:Tikhonov-estimator}, respectively.}

\subsection{Asymptotic properties}

In order to derive convergence of the iRFPCA estimator and the
Tikhonov estimator, we shall assume the sectional curvature of the
manifold is bounded from below by $\kappa$ to exclude pathological
cases. 
%
%
The compact support condition on $X$ in the case
$\kappa<0$ might be relaxed to weaker assumptions on the tail decay
of the distribution of $\Log_{\mu}X$. Such weaker conditions do
not provide more insight for our derivation, but 
complicate the proofs significantly, which is not pursued further. 

\begin{description}[labelwidth=1.2cm,leftmargin=1.4cm,align=left]
	\item[\mylabel{cond:X-moment-flr}{C.2}] 
	
	If $\kappa<0$, $X$ is assumed to lie in a compact subset $\mathcal{K}$
	almost surely. 
	Moreover, errors $\varepsilon_{i}$ are identically distributed
	with zero mean and variance not exceeding a constant $C>0$.
\end{description}
The follow conditions concern the spacing and the decay rate of eigenvalues
$\lambda_{k}$ of the covariance operator, as well as the strength
of the signal $b_{k}$. They are standard in the literature of functional
linear regression, for example, \citet{Hall2007c}.
\begin{description}[labelwidth=1.2cm,leftmargin=1.4cm,align=left]
	\setlength\itemsep{1.5mm}
	\item[\mylabel{cond:eigen-decay}{C.3}]  
	
	For $k\geq1$, $\lambda_{k}-\lambda_{k+1}\geq Ck^{-\alpha-1}$.
	
	\item[\mylabel{cond:beta-decay}{C.4}] 
	
	$|b_{k}|\leq Ck^{-\varrho}$, $\alpha>1$ and $(\alpha+1)/2<\varrho$.
	
\end{description}
Let $\mathcal{F}(C,\alpha,\varrho)$ be the collection of distributions
$f$ of $(X,Y)$ satisfying conditions \ref{cond:X-moment-flr}-\ref{cond:beta-decay}.
The following theorem establish the convergence rate of the iRFPCA
estimator $\hat{\beta}$ for the class of models in $\mathcal{F}(C,\alpha,\varrho)$. 
\begin{thm}
	\label{thm:fpcr-estimator}Assume that conditions \ref{cond:exist-mean-function},
	\ref{cond:cut-locus-prime}, \ref{cond:manifold-property}-{\ref{cond:Lipschitz-moments}}
	and \ref{cond:flr-identifiability}-\ref{cond:beta-decay} hold. If
	$K\asympeq n^{1/(4\alpha+2\varrho+2)}$, then 
	\[
	\lim_{c\rightarrow\infty}\underset{n\rightarrow\infty}{\lim\sup}\sup_{f\in\mathcal{F}}\prob_{f}\left\{ \int_{\tdomain}d_{\manifold}^{2}(\hat{\beta}(t),\beta(t))\diffop\upsilon(t)>cn^{-\frac{2\varrho-1}{2\varrho+4\alpha+2}}\right\} =0.
	\]
\end{thm}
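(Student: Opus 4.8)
The plan is to reduce the integrated squared geodesic distance to an intrinsic discrepancy living in the single fixed tensor Hilbert space $\mathscr{T}(\mu)$, and then to carry out a Hall--Horowitz-type (\citealt{Hall2007c}) bias/variance analysis there. Write $W=\Log_{\mu}\beta=\sum_{k}b_{k}\phi_{k}\in\mathscr{T}(\mu)$ and $\hat W=\Log_{\hat\mu}\hat\beta=\sum_{k=1}^{K}\hat b_{k}\hat\phi_{k}\in\mathscr{T}(\hat\mu)$. First I would establish a two-base-point refinement of Proposition~\ref{prop:truncation-residual}: since the curvature is bounded below by $\kappa$ and, when $\kappa<0$, condition~\ref{cond:X-moment-flr} confines $X$ (hence $\mu$ and $\hat\mu$) to a fixed compact set, a Jacobi-field/Rauch comparison argument yields
\[
d_{\manifold}\big(\Exp_{\hat\mu(t)}\hat W(t),\Exp_{\mu(t)}W(t)\big)\asymplt\big|\mathcal{P}_{\hat\mu(t),\mu(t)}\hat W(t)-W(t)\big|+d_{\manifold}(\hat\mu(t),\mu(t))\,(1+|\hat W(t)|).
\]
Integrating over $\tdomain$, using $\vfnorm[\hat\mu][2]{\hat W}=\Op(1)$, and invoking Theorem~\ref{thm:property-mean-function}(\ref{enu:thm:mean-curve-4}) to get $\sup_{t}d_{\manifold}^{2}(\hat\mu(t),\mu(t))=\Op(n^{-1})$, gives
\[
\int_{\tdomain}d_{\manifold}^{2}(\hat\beta(t),\beta(t))\diffop\upsilon(t)\asymplt\vfnorm[\mu][2]{\hat W\ominus_{\Gamma}W}+\Op(n^{-1}),
\]
so it suffices to bound $\vfnorm[\mu][2]{\hat W\ominus_{\Gamma}W}=\vfnorm[\mu][2]{\Gamma\hat W-W}$, an object entirely inside $\mathscr{T}(\mu)$.

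Second, I would transport the whole estimation problem into $\mathscr{T}(\mu)$. By Proposition~\ref{prop:property-Gamma-operator}, $\Phi\hat\covarop$ has eigenpairs $(\hat\lambda_{k},\Gamma\hat\phi_{k})$, and Theorem~\ref{thm:explict-bound-eigenfunction} supplies the key perturbation rates $\opnorm[\mu][2]{\hat\covarop\ominus_{\Phi}\covarop}=\Op(n^{-1})$, $\sup_{k}|\hat\lambda_{k}-\lambda_{k}|=\Op(n^{-1/2})$, and, using condition~\ref{cond:eigen-decay} so that $\eta_{k}\asympgt k^{-\alpha-1}$, the bound $\vfnorm[\mu][2]{\Gamma\hat\phi_{k}-\phi_{k}}\asymplt n^{-1}k^{2\alpha+2}$. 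I would likewise transport the empirical cross-covariance and prove $\vfnorm[\mu]{\Gamma\hat\chi-\chi}=\Op(n^{-1/2})$; this step must absorb the error incurred because $\hat\chi$ is built from $\Log_{\hat\mu}X_{i}$ rather than $\Log_{\mu}X_{i}$, again controlled through $\sup_{t}d_{\manifold}(\hat\mu,\mu)=\Op(n^{-1/2})$ together with the moment condition~\ref{cond:X-moment-flr}.

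Third, writing $\hat{\mathcal R}_{K}=\sum_{k\le K}\hat\lambda_{k}^{-1}(\Gamma\hat\phi_{k})\otimes(\Gamma\hat\phi_{k})$ and $\mathcal R_{K}=\sum_{k\le K}\lambda_{k}^{-1}\phi_{k}\otimes\phi_{k}$, I would use
\[
\Gamma\hat W-W=\hat{\mathcal R}_{K}(\Gamma\hat\chi-\chi)+(\hat{\mathcal R}_{K}-\mathcal R_{K})\chi-\sum_{k>K}b_{k}\phi_{k}
\]
and bound the three pieces. The truncation bias is $\sum_{k>K}b_{k}^{2}\asymplt K^{1-2\varrho}$ by~\ref{cond:beta-decay}. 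The stochastic term contributes $\sum_{k\le K}\hat\lambda_{k}^{-2}\vfinnerprod[\mu]{\Gamma\hat\chi-\chi,\Gamma\hat\phi_{k}}^{2}\asymplt n^{-1}\sum_{k\le K}k^{2\alpha}\asymplt n^{-1}K^{2\alpha+1}$. The dominant and most delicate term is the eigensystem-perturbation term, bounded by $\opnorm[\mu][2]{\hat{\mathcal R}_{K}-\mathcal R_{K}}\,\vfnorm[\mu][2]{\chi}$; splitting $\hat{\mathcal R}_{K}-\mathcal R_{K}$ into eigenvalue and eigenfunction contributions, summing the squared per-mode errors, and using the rates above gives $\opnorm[\mu][2]{\hat{\mathcal R}_{K}-\mathcal R_{K}}\asymplt\sum_{k\le K}k^{2\alpha}\vfnorm[\mu][2]{\Gamma\hat\phi_{k}-\phi_{k}}\asymplt n^{-1}\sum_{k\le K}k^{4\alpha+2}\asymplt n^{-1}K^{4\alpha+3}$, with $\vfnorm[\mu]{\chi}<\infty$ by~\ref{cond:flr-identifiability}, \ref{cond:eigen-decay} and~\ref{cond:beta-decay}. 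Collecting terms, $\vfnorm[\mu][2]{\Gamma\hat W-W}=\Op(K^{1-2\varrho}+n^{-1}K^{4\alpha+3})$, and the choice $K\asympeq n^{1/(4\alpha+2\varrho+2)}$ balances the two at the asserted rate $n^{-(2\varrho-1)/(4\alpha+2\varrho+2)}$. Since every constant depends only on $(C,\alpha,\varrho)$ and the curvature bound, the $\Op$ statement upgrades to the uniform form $\lim_{c\rightarrow\infty}\limsup_{n}\sup_{f\in\mathcal F}\prob_{f}\{\cdot\}=0$.

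The main obstacle I anticipate is not the Hall--Horowitz arithmetic of the third paragraph but the first two steps: establishing the two-base-point comparison inequality with a constant uniform over $\mathcal F(C,\alpha,\varrho)$, and rigorously transporting $\hat\chi$, $\hat\phi_{k}$ and $\hat\lambda_{k}$ from $\mathscr{T}(\hat\mu)$ to $\mathscr{T}(\mu)$ while showing that the mean-estimation error enters only at order $n^{-1/2}$, hence contributes a term no larger than the dominant $n^{-1}K^{4\alpha+3}$. Keeping the geometry under control—nonzero curvature and the failure of $\Exp$ to be injective away from the compact set—throughout this bookkeeping is where the genuinely Riemannian difficulties concentrate.
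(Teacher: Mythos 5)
Your proposal is correct and follows the same overall strategy as the paper's proof: transport everything into the fixed space $\mathscr{T}(\mu)$ via $\Gamma$ and $\Phi$, invoke Theorem \ref{thm:explict-bound-eigenfunction} for the perturbation rates $\opnorm[\mu][2]{\hat{\covarop}\ominus_{\Phi}\covarop}=\Op(n^{-1})$ and $\vfnorm[][2]{\hat{\phi}_{k}\ominus_{\Gamma}\phi_{k}}\asymplt n^{-1}k^{2\alpha+2}$, and then run the Hall--Horowitz bias/variance arithmetic, balancing $K^{1-2\varrho}$ against $n^{-1}K^{4\alpha+3}$. There are two execution differences. First, where you propose a single two-base-point comparison inequality (to be proved by Jacobi-field/Rauch arguments) bounding $d_{\manifold}(\Exp_{\hat\mu}\hat W,\Exp_{\mu}W)$ directly, the paper avoids proving any new geometric lemma: it inserts the intermediate curve $\check{\beta}=\Exp_{\mu}\sum_{k\le K}\hat{b}_{k}\Gamma\hat{\phi}_{k}$, applies the triangle inequality $d_{\manifold}^{2}(\hat\beta,\beta)\le 2d_{\manifold}^{2}(\hat\beta,\check\beta)+2d_{\manifold}^{2}(\check\beta,\beta)$, absorbs the base-point change into the $\Op(n^{-1})$ term $d_{\manifold}^{2}(\hat\beta,\check\beta)$, and converts $d_{\manifold}^{2}(\check\beta,\beta)$ to a $\vfnorm[\mu]{\cdot}$ bound using the already-established Proposition \ref{prop:truncation-residual} (both endpoints now sit over the same base curve $\mu$). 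Your route would work but puts the hardest geometric burden exactly where the paper has arranged not to need it. Second, your decomposition is operator-level (resolvent form $\hat{\mathcal R}_{K}(\Gamma\hat\chi-\chi)+(\hat{\mathcal R}_{K}-\mathcal R_{K})\chi$ plus tail), whereas the paper's is coefficient-wise ($A_{1}=\sum(\hat b_{k}-b_{k})\phi_{k}$, $A_{2}=\sum b_{k}(\Gamma\hat\phi_{k}-\phi_{k})$, cross term $A_{3}$, plus tail); these are algebraically equivalent reorganizations yielding the same dominant terms and the same rate.
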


For the Tikhonov estimator $\tilde{\beta}$, we have a similar result.
Instead of conditions \ref{cond:eigen-decay}-\ref{cond:beta-decay},
we make the following assumptions, which again are standard in the functional data 
literature.

\begin{description}[labelwidth=1.2cm,leftmargin=1.4cm,align=left]
	\setlength\itemsep{1.5mm}
	\item[\mylabel{cond:eigen-decay-tik}{C.5}] 
	
	$k^{-\alpha}\leq C\lambda_{k}$.
	
	\item[\mylabel{cond:beta-decay-tik}{C.6}] 
	
	$|b_{k}|\leq Ck^{-\varrho}$, $\alpha>1$ and $\alpha-1/2<\varrho$.
\end{description}
Let $\mathcal{G}(C,\alpha,\varrho)$ be the class of distributions
of $(X,Y)$ that satisfy \ref{cond:X-moment-flr} and \ref{cond:eigen-decay-tik}-\ref{cond:beta-decay-tik}.
The following result shows that the convergence rate of $\tilde{\beta}$
is at least $n^{-(2\varrho-\alpha)/(\alpha+2\varrho)}$ in terms of integrated
square geodesic distance.
\begin{thm}
	\label{thm:tikhonov-estimator-flr}Assume that conditions \ref{cond:exist-mean-function},
	\ref{cond:cut-locus-prime}, \ref{cond:manifold-property}-{\ref{cond:Lipschitz-moments},
		\ref{cond:flr-identifiability}}-\ref{cond:X-moment-flr} and \ref{cond:eigen-decay-tik}-\ref{cond:beta-decay-tik}
	hold. If $\rho\asympeq n^{-\alpha/(\alpha+2\varrho)}$, then
	\[
	\lim_{c\rightarrow\infty}\underset{n\rightarrow\infty}{\lim\sup}\sup_{f\in\mathcal{G}}\prob_{f}\left\{ \int_{\tdomain}d_{\manifold}^{2}(\tilde{\beta}(t),\beta(t))\diffop\upsilon(t)>cn^{-\frac{2\varrho-\alpha}{2\varrho+\alpha}}\right\} =0.
	\]
\end{thm}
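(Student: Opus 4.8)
The plan is to transport the entire problem into the single tensor Hilbert space $\mathscr{T}(\mu)$ and then run a resolvent (Tikhonov) error analysis analogous to the Euclidean case. Write $\tilde b=\Log_{\hat\mu}\tilde\beta=\hat\covarop^{+}\hat\chi\in\mathscr{T}(\hat\mu)$ and $b=\Log_{\mu}\beta\in\mathscr{T}(\mu)$, and recall from the model that $\chi=\covarop b$. Using Proposition~\ref{prop:property-Gamma-operator}(3)--(4) together with $\Phi I_{\hat\mu}=I_\mu$, one obtains the key identity
\[
\Gamma\tilde b=\Gamma(\hat\covarop^{+}\hat\chi)=(\Phi\hat\covarop+\rho I_\mu)^{-1}(\Gamma\hat\chi),
\]
so that $\Gamma\tilde b$ is exactly the Tikhonov-regularized solution built from the transported operator $\Phi\hat\covarop$ and the transported cross-covariance $\Gamma\hat\chi$, both now living on $\mathscr{T}(\mu)$. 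By Proposition~\ref{prop:truncation-residual} (invoking compactness of $\mathcal{K}$ when $\kappa<0$ and the direct inequality when $\kappa\ge0$), the target integrated squared geodesic distance is controlled, up to a multiplicative constant and terms of lower order governed by $d_\manifold(\hat\mu,\mu)$ (arising from the discrepancy between parallel transport and the logarithm map at nearby base points), by the intrinsic squared discrepancy $\vfnorm[\mu][2]{\tilde b\ominus_\Gamma b}=\vfnorm[\mu][2]{\Gamma\tilde b-b}$. Hence it suffices to bound this quantity.

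Next I would apply the standard Tikhonov decomposition in $\mathscr{T}(\mu)$,
\[
\Gamma\tilde b-b=\big\{(\covarop+\rho I_\mu)^{-1}\chi-b\big\}+\big\{(\Phi\hat\covarop+\rho I_\mu)^{-1}\Gamma\hat\chi-(\covarop+\rho I_\mu)^{-1}\chi\big\},
\]
and split the second (stochastic) bracket with the resolvent identity as
\[
(\Phi\hat\covarop+\rho I_\mu)^{-1}(\Gamma\hat\chi-\chi)+(\Phi\hat\covarop+\rho I_\mu)^{-1}(\covarop-\Phi\hat\covarop)(\covarop+\rho I_\mu)^{-1}\chi.
\]
The first bracket is the regularization bias $-\rho(\covarop+\rho I_\mu)^{-1}b$, whose squared norm $\sum_k\rho^{2}b_k^{2}/(\lambda_k+\rho)^{2}$ is handled by the signal and eigenvalue decay \ref{cond:eigen-decay-tik}--\ref{cond:beta-decay-tik}; splitting the sum at $k^{\ast}\asympeq\rho^{-1/\alpha}$ shows it is $\Op(\rho^{(2\varrho-1)/\alpha})$, of strictly smaller order than the target. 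The remaining ingredients are: (i) the operator bound $\opnorm[\mu][2]{\covarop-\Phi\hat\covarop}=\opnorm[\mu][2]{\hat\covarop\ominus_\Phi\covarop}=\Op(n^{-1})$ from Theorem~\ref{thm:explict-bound-eigenfunction}; (ii) a companion concentration bound $\vfnorm[\mu][2]{\Gamma\hat\chi-\chi}=\Op(n^{-1})$; and (iii) the trivial resolvent bounds $\opnorm[\mu]{(\cdot+\rho I_\mu)^{-1}}\le\rho^{-1}$.

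Combining these, the dominant stochastic contribution is the operator-perturbation term, crudely bounded by $\rho^{-1}\opnorm[\mu]{\covarop-\Phi\hat\covarop}\,\vfnorm[\mu]{(\covarop+\rho I_\mu)^{-1}\chi}=\Op(\rho^{-1}n^{-1/2})$, where $\vfnorm[\mu]{(\covarop+\rho I_\mu)^{-1}\chi}\le\vfnorm[\mu]{b}<\infty$; its square is thus $\Op(\rho^{-2}n^{-1})$, and the $\Gamma\hat\chi-\chi$ term is of the same or smaller order. Setting $\rho\asympeq n^{-\alpha/(\alpha+2\varrho)}$ balances these against the negligible bias and yields $\vfnorm[\mu][2]{\Gamma\tilde b-b}=\Op(\rho^{-2}n^{-1})=\Op(n^{-(2\varrho-\alpha)/(2\varrho+\alpha)})$, which transfers to the geodesic statement; tracking that all constants depend only on $(C,\alpha,\varrho)$ then produces the uniform-over-$\mathcal{G}$ probability bound.

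The main obstacle is ingredient (ii): establishing $\vfnorm[\mu][2]{\Gamma\hat\chi-\chi}=\Op(n^{-1})$ uniformly over $\mathcal{G}$. Unlike its Euclidean analogue, $\hat\chi$ is assembled from $\Log_{\hat\mu}X_i$ along the random curve $\hat\mu$ and must be parallel-transported by $\Gamma=\Gamma_{\hat\mu,\mu}$ before it can be compared with $\chi$, so the error couples the sampling fluctuation of $n^{-1}\sum_i(Y_i-\bar Y)\Log_\mu X_i$ with the mean-curve estimation error. I would decompose $\Gamma\hat\chi-\chi$ into a centered empirical-process piece and a piece driven by $\sup_t d_\manifold(\hat\mu(t),\mu(t))$, controlling the former by a moment/central-limit argument in $\mathscr{T}(\mu)$ (using $\expect\vfnorm[][4]{\Log_\mu X}<\infty$ under \ref{cond:X-moment-flr} together with the Lipschitz and boundedness conditions) and the latter by the smoothness of parallel transport combined with $\int_{\tdomain}d_\manifold^{2}(\hat\mu,\mu)\diffop\upsilon=\Op(n^{-1})$ from Theorem~\ref{thm:property-mean-function}. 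Rendering every such bound uniform over the class $\mathcal{G}(C,\alpha,\varrho)$, rather than for a single fixed distribution, is the final technical hurdle.
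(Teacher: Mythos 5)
Your proposal follows essentially the same route as the paper's proof: transport everything to $\mathscr{T}(\mu)$ via $\Gamma$ and $\Phi$ using the identity $\Phi\hat{\covarop}^{+}=(\Phi\hat{\covarop}+\rho I_{\mu})^{-1}$, run a resolvent perturbation analysis whose dominant term is the operator-perturbation contribution of order $\rho^{-2}\opnorm[\mu][2]{\hat{\covarop}\ominus_{\Phi}\covarop}=\Op(\rho^{-2}n^{-1})$, and convert back to geodesic distance via Proposition \ref{prop:truncation-residual} and condition \ref{cond:X-moment-flr}. The only cosmetic difference is that the paper centers the cross-covariance at $\chi_{n}$ and delegates the pure-sampling-plus-bias part to the Euclidean Hall--Horowitz bound, whereas you center at the population $\chi$ and bound bias and $\Gamma\hat{\chi}-\chi$ separately; both give the claimed rate since the operator term dominates.
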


It is important to point out that the theory in \citet{Hall2007c}
is formulated for Euclidean functional data and hence does not apply to
Riemannian functional data. In particular, their proof machinery 
depends on the linear structure of the sample mean function $n^{-1}\sum_{i=1}^{n}X_{i}$ for Euclidean functional data. However, the intrinsic empirical mean generally
does not admit an analytic expression, which hinges derivation of the 
optimal convergence rate. We leave the refinement on minimax rates of iRFPCA and Tikhonov estimators
to future research. \rev{Note that model \eqref{eq:mFLR} can be extended to include a finite and fixed number of scalar predictors with slight modification, and the asymptotic properties of $\hat{\beta}$ and $\tilde{\beta}$ remain unchanged.}

\section{Numerical examples\label{sec:Numerical-Evidence}}

\subsection{Simulation studies}

\lincomment{We consider two manifolds that are frequently encountered in practice\footnote{Implementation of iRFPCA on these manifolds can be found on Github: https://github.com/linulysses/iRFDA.}. The first
	one is the unit sphere $\usphered$ which is a compact nonlinear Riemannian
	submanifold of $\real^{d+1}$ for a positive integer $d$. The sphere can be used to model compositional data, as exhibited in \citet{Dai2017} which also provides details of the geometry of $\usphered$. 
	Here we consider the case of $d=2$. The sphere $\usphere$ consists of points $(x,y,z)\in\real^{3}$
	satisfying $x^{2}+y^{2}+z^{2}=1$.} Since the intrinsic Riemannian geometry of $\usphere$ is the
same as the one inherited from its ambient space (referred to as ambient
geometry hereafter), according to the discussion in Section \ref{subsec:euclidean-submanifold}, the ambient approach to FPCA and functional linear regression yields the same results as our intrinsic approach.

\lincomment{The other manifold considered is the space of $m\times m$ symmetric positive
	definite matrices, denoted by $\spd$. The space $\spd$ includes nonsingular  covariance matrices which naturally arise from the study of DTI  data \citep{Dryden2009} and functional connectivity \citep{Friston2011}.
	Transitionally,
	it is treated as a convex cone of the linear space $\sym$
	of symmetric $m\times m$ matrices. However, as discussed in \citet{Arsigny2007},
	this geometry suffers from the undesired swelling effect. To rectify
	the issue, several intrinsic geometric structures have been proposed, including the affine-invariant metric \citep{Moakher2005} 
	which is also known as the trace metric (Chapter XII, \citealt{Lang1999}). 
	This metric is invariant to affine transformation (i.e., change of coordinates) on $S\in \spd$ and thus suitable for covariance matrices; see \cite{Arsigny2007} as well as \cite{Fletcher2007} for more details. Moreover, the affine-invariant metric has a negative sectional curvature, and thus the Fr\'echet mean is unique if it exists. 
	In our simulation, we consider $m=3$.} We emphasize that the affine-invariant geometry of $\spd$ is
different from the geometry inherited from the linear space
$\sym$. Thus, the ambient RFPCA of \citet{Dai2017} might yield inferior performance on this manifold.

We simulate data as follows. First, the time
domain is set to $\tdomain=[0,1]$. The mean curves for $\usphere$ and $\spd$ are, respectively, $\mu(t)=(\sin\varphi(t)\cos\theta(t),\linebreak[0]\sin\varphi(t)\sin\theta(t),\cos\varphi(t))$
with $\theta(t)=2t^{2}+4t+1/2$ and $\varphi(t)=(t^{3}+3t^{2}+t+1)/2$,
and  $\mu(t)=(t^{0.4}, 0.5t, 0.1t^{1.5}; 0.5t, t^{0.5}, 0.5t; 
0.1t^{1.5},  0.5t$,  $t^{0.6})$ that is a $3 \times 3$ matrix.
The Riemannian random processes are produced in accordance to $X=\Exp\left(\sum_{k=1}^{20}\sqrt{\lambda_{k}}\xi_{k}\phi_{k}\right)$, where  $\xi_{k}\overset{i.i.d.}{\sim}\mathrm{Uniform}(-\pi/4,\pi/4)$ for $\usphere$ and $\xi_{k}\overset{i.i.d.}{\sim}N(0,1)$ for $\spd$. We set iRFPCs
$\phi_{k}(t)=(A\psi_{k}(t))^{T}\vec E(t)$, where $\vec E(t)=(E_{1}(\mu(t)),\ldots,E_{d}(\mu(t)))$
is an orthonormal frame over the path $\mu$, $\psi_{k}(t)=(\psi_{k,1}(t),\ldots,\psi_{k,d}(t))^{T}$
with $\psi_{k,j}$ being orthonormal Fourier basis functions on $\tdomain$,
and $A$ is an orthonormal matrix that is randomly generated but fixed
throughout all simulation replicates. We take $\lambda_{k}=2k^{-1.2}$ for all manifolds. Each
curve $X(t)$ is observed at $M=101$ regular design points $t=0,0.01,\ldots,1$.
The slope function is $\beta=\sum_{k=1}^{K}c_{k}\phi_{k}$
with $c_{k}=3k^{-2}/2$. \rev{Two different types of distribution for $\varepsilon$ in \eqref{eq:mFLR} are considered, namely, normal and Student's $t$ distribution with degree of freedom $\mathrm{df}=2.1$. Note that the latter is a heavy-tailed distribution, with a smaller $\mathrm{df}$ suggesting a heavier tail and $\mathrm{df}>2$ ensuring the existence of variance.} In addition, the noise $\varepsilon$ is scaled to make the signal-to-noise ratio equal to $2$. Three
different training sample sizes are considered, namely, $50$, $150$
and $500$, while the sample size for test data is $5,000$. Each simulation
setup is repeated independently $100$ times.

\lincomment{First, we illustrate the difference between the intrinsic measure and the ambient counterpart for the discrepancy of two random objects residing on different tangent spaces, through the examples of the sphere manifold $\usphere$ and the first two iRFPCs. Recall that the metric of $\usphere$ agrees with its ambient Euclidean geometry, so that both iRFPCA and RFPCA essentially yield the same estimates for iRFPCs. We propose to use the intrinsic root mean integrated squared error (iRMISE) $\{\expect\vfnorm[\mu][2]{\hat{\phi}_{k}\ominus_{\Gamma}\phi_{k}}\}^{1/2}$ to characterize the difference between $\phi_k$ and its estimator $\hat{\phi}_k$, while \cite{Dai2017} adopt the ambient RMISE (aRMISE) $\{\expect\|\hat\phi_k-\phi_k\|_{\real^{d_0}}^2\}^{1/2}$, as discussed in Section \ref{subsec:euclidean-submanifold}. The  numerical results of iRMISE and aRMISE for $\hat\phi_1$ and $\hat\phi_2$, as well as the RMISE for $\hat\mu$, are showed in Table \ref{tab:impact-curvature}. We see that, when  $n$ is small and hence $\hat\mu$ is not sufficiently close to $\mu$, the difference between iRMISE and aRMISE is visible, while such difference decreases as the sample size grows and  $\hat{\mu}$ converges to $\mu$. In particular,  aRMISE is always larger than iRMISE since aRMISE  contains an additional ambient component that is not intrinsic to the manifold, as illustrated on the left panel of Figure \ref{fig:parallel-transport}.}

We now use iRMISE to assess the performance of iRFPCA by comparing to the ambient
counterpart RFPCA proposed by \citet{Dai2017}. 
Table \ref{tab:eigenfunctions}
presents the results for the top $5$ eigenelements. The first observation
is that iRFPCA and RFPCA yield the same results on the manifold $\usphere$, which numerically verifies our discussion in Section \ref{subsec:euclidean-submanifold}.
We notice that in \citet{Dai2017} the quality of estimation of principal components is not evaluated, likely due to the lack of a proper tool to do so. In contrast, our framework of tensor Hilbert space provides an intrinsic gauge (e.g., iRMISE) to naturally compare two vector fields along different curves. For the
case of $\spd$ which is not a Euclidean submanifold, the iRFPCA produces more accurate estimation than RFPCA. In
particular, as sample size grows, the estimation error for iRFPCA
decreases quickly, while the error of RFPCA persists. This coincides with our intuition that when the geometry induced from
the ambient space is not the same as the intrinsic geometry, the ambient RFPCA
incurs loss of statistical efficiency, or even worse, inconsistent
estimation. In summary, the results $\spd$ numerically
demonstrate that the RFPCA proposed by \citet{Dai2017} does not apply
to manifolds that do not have an ambient space or whose intrinsic
geometry differs from its ambient geometry, while our iRFPCA are applicable to such Riemannian manifolds.

\begin{table}
	\caption{\label{tab:impact-curvature}The root mean integrated squared error (RMISE) of the estimation of the mean function, and the intrinsic RMISE (iRMISE) and the ambient RMISE (aRMISE) of the  estimation for the first two eigenfunctions in the case of $\usphere$ manifold. The Monte Carlo standard error based on 100 simulation
		runs is given in parentheses.}
	\vspace{0.05in}
	\resizebox{\textwidth}{!}{
		\begin{tabular}{|c|c|c|c|c|c|c|}
			\hline 
			& \multicolumn{2}{c|}{$n=50$} & \multicolumn{2}{c|}{$n=150$} & \multicolumn{2}{c|}{$n=500$}\tabularnewline
			\hline 
			\hline 
			$\mu$ & \multicolumn{2}{c|}{0.244 (0.056)} & \multicolumn{2}{c|}{0.135 (0.029)} & \multicolumn{2}{c|}{0.085 (0.019)}\tabularnewline
			\hline 
			\hline 
			& iRMISE & aRMISE & iRMISE & aRMISE & iRMISE & aRMISE\tabularnewline
			\hline 
			$\phi_{1}$ & 0.279 (0.073) & 0.331 (0.078) & 0.147 (0.037) & 0.180 (0.042) & 0.086 (0.022) & 0.106 (0.027)\tabularnewline
			\hline 
			$\phi_{2}$ & 0.478 (0.133) & 0.514 (0128) & 0.264 (0.064) & 0.287 (0.061) & 0.147 (0.044) & 0.167 (0.042)\tabularnewline
			\hline 
	\end{tabular}}
\end{table}

\begin{table}
	\caption{\label{tab:eigenfunctions}Intrinsic root integrated mean squared error (iRMISE) of estimation for eigenelements.
		The first column denotes the manifolds, where $\protect\usphere$
		is the unit sphere 
		and $\protect\spd$ is the space of $m\times m$ symmetric
		positive-definite matrices endowed with the affine-invariant metric. In the
		second column, $\phi_{1},\ldots,\phi_{5}$ are the top five intrinsic
		Riemannian functional principal components. Columns 3-5 are (iRMISE) of the iRFPCA estimators for $\phi_{1},\ldots,\phi_{5}$
		with different sample sizes, while columns 5-8 are iRMISE for the RFPCA
		estimators. The Monte Carlo standard error based on 100 simulation
		runs is given in parentheses.}
	\vspace{0.05in}
	\resizebox{\textwidth}{!}{
		\begin{tabular}{|c|c|c|c|c|c|c|c|}
			\hline 
			\multirow{2}{*}{Manifold} & \multirow{2}{*}{FPC} & \multicolumn{3}{c|}{iRFPCA} & \multicolumn{3}{c|}{RFPCA}\tabularnewline
			\cline{3-8} 
			&  & $n=50$ & $n=150$ & $n=500$ & $n=50$ & $n=150$ & $n=500$\tabularnewline
			\hline 
			\multirow{5}{*}{$\usphere$} & $\phi_{1}$ & $0.279\,(.073)$ & $0.147\,(.037)$ & $0.086\,(.022)$ & $0.279\,(.073)$ & $0.147\,(.037)$ & $0.086\,(.022)$\tabularnewline
			\cline{2-8} 
			& $\phi_{2}$ & $0.475\,(.133)$ & $0.264\,(.064)$ & $0.147\,(.044)$ & $0.475\,(.133)$ & $0.264\,(.064)$ & $0.147\,(.044)$\tabularnewline
			\cline{2-8} 
			& $\phi_{3}$ & $0.647\,(.153)$ & $0.389\,(.120)$ & $0.206\,(.054)$ & $0.647\,(.153)$ & $0.389\,(.120)$ & $0.206\,(.054)$\tabularnewline
			\cline{2-8} 
			& $\phi_{4}$ & $0.818\,(.232)$ & $0.502\,(.167)$ & $0.261\,(.065)$ & $0.818\,(.232)$ & $0.502\,(.167)$ & $0.261\,(.065)$\tabularnewline
			\cline{2-8} 
			& $\phi_{5}$ & $0.981\,(.223)$ & $0.586\,(.192)$ & $0.329\,(.083)$ & $0.981\,(.223)$ & $0.586\,(.192)$ & $0.329\,(.083)$\tabularnewline
			\hline 
			\multirow{5}{*}{$\spd$} & $\phi_{1}$ & $0.291\,(.105)$ & $0.155\,(.046)$ & $0.085\,(.025)$ & $0.707\,(.031)$ & $0.692\,(.021)$ & $0.690\,(.014)$\tabularnewline
			\cline{2-8} 
			& $\phi_{2}$ & $0.523\,(.203)$ & $0.283\,(.087)$ & $0.143\,(.040)$ & $0.700\,(.095)$ & $0.838\,(.113)$ & $0.684\,(.055)$\tabularnewline
			\cline{2-8} 
			& $\phi_{3}$ & $0.734\,(.255)$ & $0.418\,(.163)$ & $0.206\,(.067)$ & $0.908\,(.116)$ & $0.904\,(.106)$ & $0.981\,(.039)$\tabularnewline
			\cline{2-8} 
			& $\phi_{4}$ & $0.869\,(.251)$ & $0.566\,(.243)$ & $0.288\,(.086)$ & $0.919\,(.115)$ & $1.015\,(.113)$ & $0.800\,(.185)$\tabularnewline
			\cline{2-8} 
			& $\phi_{5}$ & $1.007\,(.231)$ & $0.699\,(.281)$ & $0.378\,(.156)$ & $0.977\,(.100)$ & $1.041\,(.140)$ & $1.029\,(.058)$\tabularnewline
			\hline 
	\end{tabular}}
\end{table}

\begin{table}[h]
	\caption{\label{tab:FLR}Estimation quality of slope function $\beta$ and prediction of $y$
		on test datasets. The second column indicates the distribution of noise, while the third column indicates the manifolds, where
		$\protect\usphere$ is the unit sphere
		and $\protect\spd$ is the space of $m\times m$ symmetric positive-definite matrices endowed
		with the affine-invariant metric. Columns 4-6 are performance of the iRFLR
		on estimating the slope curve $\beta$ and predicting the response
		on new instances of predictors, while columns 7-9 are performance
		of the RFLR method. Estimation quality of slope curve is quantified
		by intrinsic root mean integrated squared errors (iRMISE), while the performance
		of prediction on independent test data is measured by root mean
		squared errors (RMSE). The Monte Carlo standard error based on 100
		simulation runs is given in parentheses.}
	\vspace{0.05in}
	\centering
	\resizebox{\textwidth}{!}{
		\begin{tabular}{|c|c|c|c|c|c|c|c|c|}
			\hline 
			\multicolumn{3}{|c|}{}  & \multicolumn{3}{c|}{iRFLR} & \multicolumn{3}{c|}{RFLR}\tabularnewline
			\cline{4-9} 
			\multicolumn{3}{|c|}{} & $n=50$ & $n=150$ & $n=500$ & $n=50$ & $n=150$ & $n=500$\tabularnewline
			\hline 
			\multirow{4}{*}{\rotatebox[origin=c]{90}{Estimation}} & \multirow{2}{*}{\rotatebox[origin=c]{90}{normal}} & $\usphere$ & $0.507\,(0.684)$ & $0.164\,(0.262)$ & $0.052\,(0.045)$ & $0.507\,(0.684)$ & $0.164\,(0.262)$ & $0.052\,(0.045)$\tabularnewline
			\cline{3-9} 
			& & SPD & $1.116\,(2.725)$ & $0.311\,(0.362)$ & $0.100\,(0.138)$ & $2.091\,(0.402)$ & $1.992\,(0.218)$ & $1.889\,(0.126)$\tabularnewline
			\cline{2-9}  & \multirow{2}{*}{\rotatebox[origin=c]{90}{$t(2.1)$}} & $\usphere$ & $0.575\,(0.768)$ & $0.183\,(0.274)$ & $0.053\,(0.050)$ & $0.575\,(0.768)$  & $0.183\,(0.274)$ & $0.053\,(0.050)$ \tabularnewline
			\cline{3-9} 
			& & SPD & $1.189\,(2.657)$ & $0.348\,(0.349)$  & $0.108\,(0.141)$ & $2.181\,(0.439)$  & $1.942\,(0.209)$ & $1.909\,(0.163)$ 
			\tabularnewline
			\hline 
			\multirow{4}{*}{\rotatebox[origin=c]{90}{Prediction}} & \multirow{2}{*}{\rotatebox[origin=c]{90}{normal}} & $\usphere$ & $0.221\,(0.070)$ & $0.135\,(0.046)$ & $0.083\,(0.019)$ & $0.221\,(0.070)$ & $0.135\,(0.046)$ & $0.083\,(0.019)$\tabularnewline
			\cline{3-9} 
			& & SPD & $0.496\,(0.184)$ & $0.284\,(0.092)$ & $0.165\,(0.062)$ & $0.515\,(0.167)$ & $0.328\,(0.083)$ & $0.248\,(0.047)$\tabularnewline
			\cline{2-9}  & \multirow{2}{*}{\rotatebox[origin=c]{90}{$t(2.1)$}} & $\usphere$ & $0.251\,(0.069)$ & $0.142\,(0.042)$ & $0.088\,(0.020)$ & $0.251\,(0.069)$ & $0.142\,(0.042)$ & $0.088\,(0.020)$ \tabularnewline
			\cline{3-9} 
			& & SPD & $0.532\,(0.189)$ & $0.298\,(0.097)$ & $0.172\,(0.066)$ & $0.589\,(0.185)$ & $0.360\,(0.105)$  & $0.268\,(0.051)$
			\tabularnewline
			\hline 
	\end{tabular}}
\end{table}

For functional linear regression, we adopt iRMISE to quantify the quality
of the estimator $\hat{\beta}$ for slope function $\beta$, and assess
the prediction performance by
prediction RMSE on independent test dataset. For comparison, we also
fit the functional linear model using the principal components produced
by RFPCA \citep{Dai2017}, and hence we refer to this competing method
as RFLR. For both methods, the tuning parameter which is the number
of principal components included for $\hat{\beta}$, is selected by
using an independent validation data of the same size of the training
data to ensure fair comparison between two methods. The simulation
results are presented in Table \ref{tab:FLR}. As expected, we observe
that on $\usphere$ both methods produce the same results. For the SPD manifold, in terms of estimation, we see that iRFLR yields
far better estimators than RFLR does. Particularly, we again observe
that, the quality of RFLR estimators does not improve significantly
when sample size increases, in contrast to estimators based on
the proposed iRFLR. For prediction, iRFLR outperforms
RFLR by a significant margin. Interestingly, comparing to estimation
of slope function where the RFLR estimator is much inferior to the
iRFLR one, the prediction performance by RFLR is relatively closer to that by iRFLR. We attribute this to the smoothness
effect brought by the integration in model \eqref{eq:mFLR}.
Nevertheless, although the integration cancels out certain discrepancy
between the intrinsic and the ambient geometry, the loss
of efficiency is inevitable for the RFLR method that is bound to the
ambient spaces. \rev{In addition, we observe that the performance of both methods for Gaussian noise  is slightly better than that in the case of heavy-tailed noise. }

\subsection{Data application}
We apply the proposed iRFPCA and iRFLR to analyze the relationship between
functional connectivity and behavioral data from the HCP 900 subjects
release \citep{Essen2013}. Although neural effects on language \citep{Binder1997},
emotion \citep{Phana2002} and fine motor skills \citep{Dayan2011}
have been extensively studied in the literature, scarce is the exploration
on human behaviors that do not seem related to neural activities,
such as endurance. Nevertheless, a recent research by \citet{Raichlen2016}
suggests that endurance can be related to functional connectivity.
Our goal is to study the endurance performance of subjects based on
their functional connectivity.

The data consists of $n=330$ subjects who are healthy young  adults, in which each subject is asked to walk for two minutes and the distance in feet
is recorded. Also, each subject participates in a motor task, where
participants are asked to act according to presented visual cues,
such as tapping their fingers, squeezing their toes or moving their
tongue. During the task, the brain of each subject is scanned and
the neural activities are recorded at 284 equispaced time points. After preprocessing,
the average BOLD (blood-oxygen-level dependent) signals at 68 different
brain regions are obtained. The details of experiment and data acquisition
can be found in the reference manual of WU-Minn HCP 900 Subjects Data
Release that is available on the website of human connectome project.

Our study focuses on $m=6$ regions that are related to the primary motor cortex,
including precentral gyrus, Broca's area, etc. At each design time
point $t$, the functional connectivity of the $i$th subject is represented by
the covariance matrix $S_i(t)$ of BOLD signals from regions of interest (ROI). \lincomment{To practically compute $S_i(t)$, let $V_{it}$ be an $m$-dimensional column vector that represents the BOLD signals at time $t$ from the $m$ ROIs of the $i$th subject. We then adopt a local sliding window approach \citep{Park2017} to compute $S_i(t)$  by $$S_i(t)=\frac{1}{2h+1}\sum_{j=t-h}^{t+h}(V_{ij}-\bar{V}_{it})(V_{ij}-\bar{V}_{it})^T\quad\text{with}\quad \bar{V}_{it}=\frac{1}{2h+1}\sum_{j=t-h}^{t+h}V_{ij},$$ where   $h$ is a positive integer that represents the length of the sliding window to compute $S_i(t)$ for $t=h+1,h,\ldots,284-h$. Without loss of generality, we reparameterize each $S_i(\cdot)$ from the domain $[h+1,284-h]$ to $[1,284-2h]$. In practice, the window length $h$ is required to be sufficiently large to make  $S_t$ nonsingular, but not too large in order to avoid significant bias. We found that the analysis below is not sensitive to the choice of $h$  as long as it is in a reasonable range, for example, $h\in [m,3m]$. Thus, we simply set $h=2m$ throughout our study.}

The functional predictors are then the functions $S_i(\cdot)$ whose values are covariance matrices,
or SPDs, and hence is regarded as
a curve on $\spd$. The heat maps of the first two iRFPCs along 8 time points are depicted in the top two rows of Figure \ref{fig:phi-beta}, which shows the modes of variation in the functional connectivity with contrasts between early and middle/later times in both iRFPCs.
The scalar response is the distance that subjects walked in
two minutes. A scatterplot of the first two principal component scores labeled by the response variable for 10 randomly selected subjects is given in Figure \ref{fig:y-vs-2pcs} respectively for iRFPCA and RFPCA, to visualize the difference of scores produced by these methods. Gender, age, gait speed and strength are included as baseline covariates, selected by the forward-stepwise selection method (Section 3.3, \citealt{Hastie2009}). Among these covariates, gender and age are in accordance with the common sense about endurance, while gait speed and muscle strength could be influential since endurance is measured by the distance walked in two minutes. Our primary interest is to assess the significance of the functional predictor when effect of the baseline covariates is controlled.

To fit the intrinsic functional linear model, we adopt the cross-validation procedure
to select the number of components to be included in representing
the Riemannian functional predictor and the Riemannian slope function
$\beta$.  For assessment, we conduct 100 runs of 10-fold cross-validation,
where in each run we permute the data independently. In each run,
the model is fitted on 90\% data and the MSE for predicting the walking
distance is computed on the other 10\% data for both iRFLR and RFLR
methods. The fitted intrinsic Riemannian slope function $\mbox{Log}_{\hat{\mu}} \hat{\beta}$ displayed in the bottom panel of Figure \ref{fig:phi-beta} shows the pattern of weight changes. 
The MSE for iRFLR is reduced by around 9.7\%, compared to that for RFLR. 
Moreover, the $R^{2}$ for iRFLR is
$0.338$, with a p-value $0.012$ based on a permutation
test of 1000 permutations, which is significant at level 5\%. In  contrast, the $R^2$ for RFLR drops to $0.296$ and the p-value is $0.317$ that does not spell significance at all. 

%

\begin{figure}[h]
	\includegraphics[scale=0.6]{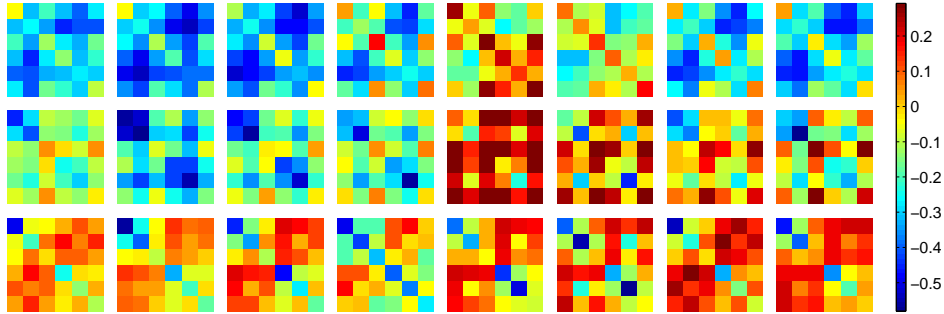}
	\caption{The $\hat{\phi}_1$, $\hat{\phi}_2$ and $\Log_{\hat{\mu}}\hat{\beta}$ at time points $t=1+24k,\,\,k=1,\ldots,8$.}
	\label{fig:phi-beta}
\end{figure}


\begin{figure}[h]
	\includegraphics[scale=0.6]{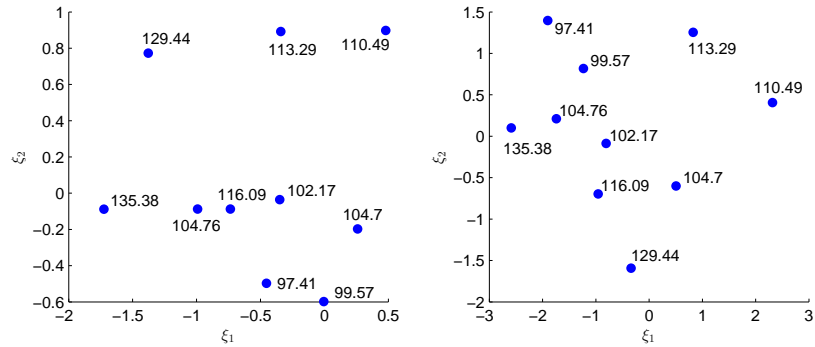}
	\caption{Scatterplot of the first two Riemannian functional principal component scores labeled by the response variable endurance in iRFPCA (left panel) and RFPCA (right panel) for 10 randomly selected subjects.}
	\label{fig:y-vs-2pcs}
\end{figure}

\appendix

\section{Background on Riemannian Manifold}

We introduce geometric concepts related to Riemannian manifolds from an intrinsic perspective  without referring to any ambient
space.

A smooth manifold is a differentiable manifold with all transition
maps being $C^{\infty}$ differentiable. For each point $p$ on the
manifold $\manifold$, there is a linear space $\tangentspace p$
of tangent vectors which are derivations. A derivation is a linear
map that sends a differentiable function on $\manifold$ into $\real$
and satisfies the Leibniz property. For example, if $D_{v}$ is the
derivation associate with the tangent vector $v$ at $p$, then $D_{v}(fg)=(D_{v}f)\cdot g(p)+f(p)\cdot D_{v}(g)$
for any $f,g\in A(\manifold)$, where $A(\manifold)$ is a collection
of real-valued differentiable functions on $\manifold$. For submanifolds
of a Euclidean space $\real^{\dim_0}$ for some $d_0>0$, tangent vectors are often perceived
as vectors in $\real^{\dim_0}$ that are tangent to the submanifold
surface. If one interprets a Euclidean tangent vector as a directional
derivative along the vector direction, then Euclidean tangent vectors
coincide with our definition of tangent vectors on a general manifold.
The linear space $T_{p}\manifold$ is called the tangent space at
$p$. The disjoint union of tangent spaces at each point constitutes
the tangent bundle, which is also equipped with a smooth manifold
structure induced by $\manifold$. The tangent bundle of $\manifold$
is conventionally denoted by $T\manifold$. A (smooth) vector field
$V$ is a map from $\manifold$ to $T\manifold$ such that $V(p)\in\tangentspace p$
for each $p\in\manifold$. It is also called a smooth section of $T\manifold$.
Noting that a tangent vector is a tensor of type $(0,1)$, a vector
field can be viewed as a kind of tensor field, which assigns a tensor
to each point on $\manifold$. A vector field along a curve $\gamma:I\rightarrow\manifold$
on $\manifold$ is a map $V$ from an interval $I\subset\real$ to
$T\manifold$ such that $V(t)\in\tangentspace{\gamma(t)}$. For a
smooth function from a manifold $\manifold$ and to another manifold
$\mathcal{N}$, the differential $\diffop\varphi_{p}$ of $f$ at
$p\in\manifold$ is a linear map from $\tangentspace p$ to $\tangentspace[\mathcal{N}]{\varphi(p)}$,
such that $\diffop\varphi_{p}(v)(f)=D_{v}(f\circ\varphi)$ for all
$f\in A(\manifold)$ and $v\in\tangentspace{p}$. 

An affine connection $\nabla$ on $\manifold$ is a bilinear mapping
that sends a pair of smooth vector fields $(U,V)$ to another smooth
vector field $\nabla_{U}V$ and satisfies $\nabla_{fU}V=f\nabla_{U}V$
and the Leibniz rule in the second argument, that is, $\nabla_{U}(fV)=\diffop f(U)V+f\nabla_{U}V$
for smooth real-valued functions $f$ on $\manifold$. An interesting
fact is that $(\nabla_{U}V)(p)$ is determined by $U(p)$ and a neighborhood
of $V(p)$. A vector field $V$ is parallel if $\nabla_{U}V=0$ for
all vector fields $U$. When a connection is interpreted as the covariant
derivative of a vector field with respect to another one, a parallel
vector field is intuitively a constant vector field in some sense,
as vanishing derivative indicates constant. Similarly, a vector field
$U$ along a curve $\gamma$ is parallel if $\nabla_{\dot{\gamma}(t)}U=0$
for all $t\in I$, where $\dot{\gamma}(t)$ denotes the differential
of $\gamma$ at $t$. Given a tangent vector $u$ at $\gamma(0)$,
there exists a unique vector field $U$ along $\gamma$ such that $\nabla_{\dot{\gamma}(t)}U=0$
and $U(\gamma(0))=u$. In this case, $U(\gamma(t))$ is the parallel
transport of $u$ along $\gamma$ from $\gamma(0)$ to $\gamma(t)$.
In particular, if $\nabla_{\dot{\gamma}}\dot{\gamma}=0$, then $\gamma$
is called a geodesic of the connection $\nabla$. Also, for any $u\in\tangentspace{\gamma(0)}$,
there exists a unique geodesic $\gamma$ such that $\nabla_{\dot{\gamma}}\dot{\gamma}=0$
and $\dot{\gamma}(0)=u$. Then the exponential map at $p=\gamma(0)$,
denoted by $\Exp_{p}$, is defined by $\Exp_{p}(u)=\gamma(1)$.

A Riemannian manifold is a smooth manifold with a metric tensor $\metric{\cdot}{\cdot}$,
such that for each $p\in\manifold$, the tensor $\metric{\cdot}{\cdot}_{p}$
defines an inner product on $\tangentspace p\times\tangentspace p$.
For Riemannian manifold, the fundamental theorem of Riemannian geometry
says that there is a unique connection that is 1) compatible with
the metric tensor, that is, for every pair of vector fields $U$ and $V$,
and every vector $v\in\tangentspace p$, $v(\metric UV)=\metric{\nabla_{v}U}V+\metric U{\nabla_{v}V}$;
2) torsion-free, that is, for any vector fields $U$ and $V$, $\nabla_{U}V-\nabla_{V}U=[U,V]$,
where $[U,V]$ denotes the Lie bracket of a pair of vector fields
$(U,V)$, and is defined by $[U,V](f)=U(V(f))-V(U(f))$ for all $f\in A(\manifold)$
(by definition, $[U,V]$ is also a vector field). This connection
is called Levi--Civita connection, which is uniquely determined by the
Riemannian metric tensor. The Riemannian curvature tensor $R$ is
a map that sends a triple of smooth vector fields $(U,V,W)$ to another
vector field $R(U,V)W=\nabla_{U}\nabla_{V}W-\nabla_{V}\nabla_{U}W-\nabla_{[U,V]}W$.
Then the sectional curvature is defined for a 2-plane spanned by two
linearly independent tangent vector at the same point $p$, given
by 
\[
K(u,v)=\frac{\metric{R(u,v)v}u_{p}}{\metric uu_{p}\metric vv_{p}-\metric uv_{p}^{2}}\in\real.
\]
The metric tensor also induces a distance function $d_{\manifold}$
on $\manifold$ that turns a connected Riemannian manifold $\manifold$
into a metric space. The function $d_{\manifold}$ is defined in the
following way. For a continuously differentiable curve $\gamma:[a,b]\rightarrow\manifold$,
the length of $\gamma$ is given by $L(\gamma)=\int_{a}^{b}\sqrt{\metric{\dot{\gamma}(t)}{\dot{\gamma}(t)}}\diffop t$.
Then $d_{\manifold}(p,q)$ is the infimum of $L(\gamma)$ over all
continuously differentiable curves joining $p$ and $q$. For a connected and complete Riemannian, given two points on the manifold, there is a minimizing geodesic connecting these two points.

\section{Implementation for $\spd$}
Given $\spd$-valued functional data $X_1,\ldots,X_n$, below we briefly outline the numerical steps to perform iRFPCA. The computation details for $\usphered$ can be found in \cite{Dai2017}.

\begin{description}[labelwidth=1.5cm,leftmargin=1.7cm,align=left]
	\item[Step 1.] Compute the sample Fr\'echet mean $\hat{\mu}$. As there is no analytic solution, the  recursive algorithm developed by  \cite{Cheng2016} can be used.
	\item[Step 2.] Select an orthonormal frame $\vec{E}=(E_1,\dots,E_d)$ along $\hat\mu$. For $\spd$, at each $S\in\spd$, the tangent space $\tangentspace[\spd]S$ is isomorphic to $\sym$. This space has a canonical linearly independent basis $e_1,\ldots,e_d$ with $d=m(m+1)/2$, defined in the following way. For an integer $k\in[1,d]$, let $N_1$ be the largest integer such that $N_1(N_1+1)/2\leq k$. Let $N_2=k-N_1(N_1-1)/2$. Then $e_k$ is defined as the $m\times m$ matrix that has 1 at $(N_1,N_2)$, 1 at $(N_2,N_1)$ and 0 elsewhere. Because the inner product on the space  $\tangentspace[\spd]{\hat{\mu}(t)}$ is given by $$\mathrm{tr}({\hat{\mu}(t)}^{-1/2}U{\hat{\mu}(t)}V{\hat{\mu}(t)}^{-1/2})$$ for $U,V\in \tangentspace[\spd] S$, in general  this basis is not orthonormal in $\tangentspace[\spd] {\hat{\mu}(t)}$.  To obtain an orthonormal basis of $\tangentspace[\spd]{\hat\mu(t)}$ for any given $t$, we can apply the Gram-Schmidt procedure on the basis $e_1,\ldots,e_d$. The orthonormal bases obtained in this way smoothly vary with $t$ and hence form an orthonormal frame of $\spd$ along $\hat\mu$. 
	\item[Step 3.] Compute the $\vec{E}$-coordinate representation $\hat{Z}_{\vec E,i}$ of each $\Log_{\hat\mu}X_i$. For $\spd$, the logarithm map at a generic $S\in\spd$ is given by $\Log_S(Q)=S^{1/2}\log(S^{-1/2}QS^{-1/2})S^{1/2}$ for $Q\in\spd$, where $\log$ denotes the matrix logarithm function. Therefore, $$\Log_{\hat\mu(t)}X_i(t)={\hat{\mu}(t)}^{1/2}\log({\hat{\mu}(t)}^{-1/2}X_i(t){\hat{\mu}(t)}^{-1/2}){\hat{\mu}(t)}^{1/2}.$$ Using the orthonormal basis $E_1(t),\ldots,E_d(t)$ obtained in the previous step, one can compute the coefficient $\hat Z_{\vec E,i}(t)$ representation of $\Log_{\hat\mu(t)}X_i(t)$ for any given $t$.
	\item[Step 4.] Compute the first $K$ eigenvalues $\hat\lambda_1,\dots,\hat{\lambda}_K$ and eigenfunctions $\hat{\phi}_{\vec E,1},\dots,\hat{\phi}_{\vec E,K}$ of the empirical covariance function $\hat{\covarop}_{\vec E}(s,t)=n^{-1}\times\linebreak[0]\sum_{i=1}^n \hat Z_{\vec E,i}(s)\hat Z_{\vec E,i}^T(t)$. This step is generic and does not involve the manifold structure. For $d=1$, the classic univariate FPCA method such as \cite{Hsing2015} can be employed to derive the eigenvalues and eigenfunctions of $\hat{\covarop}_{\vec E}$. When $d>1$, each observed coefficient function $\hat{Z}_{\vec E,i}(t)$ is vector-valued. FPCA for vector-valued functional data can be performed by the methods developed in \cite{Happ2018} or \cite{Wang2008}.
	\item[Step 5.] Compute the scores $\hat{\xi}_{ik}=\int\hat{Z}_{\vec E,i}^T(t)\hat\phi_{\vec E,k}(t)\diffop t$. Finally, compute the approximations of $X_i$ by the first $K$ principal components using $$\hat{X}_i^K(t)=\Exp_{\hat{\mu}(t)}\sum_{k=1}^K \hat\xi_{ik}\hat\phi_{\vec E,k}^T(t)\vec E(t),$$ where for $\spd$, the exponential map at a generic $S$ is given by $$\Exp_S(U)=S^{1/2}\exp(S^{-1/2}US^{-1/2})S^{1/2}$$ for $U\in\tangentspace[\spd]S$, where $\exp$ denotes the matrix exponential function.
\end{description}

\section{Proofs of Main Theorems}
\begin{proof}[Proof of Theorem \ref{prop:hilbertian-vfc}]
	We first show that $\mathscr{T}(\mu)$ is a Hilbert space. It is
	sufficient to prove that the inner product space $\mathscr{T}(\mu)$
	is complete. Suppose $\{V_{n}\}$ is a Cauchy sequence in $\mathscr{T}(\mu)$.
	We will later show that there exists a subsequence $\{V_{n_{k}}\}$
	such that 
	\begin{equation}
	\sum_{k=1}^{\infty}|V_{n_{k+1}}(t)-V_{n_{k}}(t)|<\infty,\qquad\upsilon\text{-}a.s.\label{eq:pf-hsvfc-1}
	\end{equation}
	Since $\tangentspace{\mu(t)}$ is complete, the limit $V(t)=\lim_{k\rightarrow\infty}V_{n_{k}}(t)$
	is $\upsilon$-a.s. well defined and in $\tangentspace{\mu(t)}$.
	Fix any $\epsilon>0$ and choose $N$ such that $n,m\geq M$ implies
	$\vfnorm[\mu]{V_{n}-V_{m}}\leq\epsilon$. Fatou's lemma applying to the function $|V(t)-V_{m}(t)|$ implies that
	if $m\geq N$, then $\vfnorm[\mu][2]{V-V_{m}}\leq\underset{k\rightarrow\infty}{\lim\inf}\vfnorm[\mu][2]{V_{n_{k}}-V_{m}}\leq\epsilon^{2}$.
	This shows that $V-V_{m}\in\mathscr{T}(\mu)$.
	Since $V=(V-V_{m})+V_{m}$, we see that $V\in\mathscr{T}(\mu)$. The
	arbitrariness of $\epsilon$ implies that $\lim_{m\rightarrow\infty}\vfnorm[\mu]{V-V_{m}}=0$.
	Because $\vfnorm[\mu]{V-V_{n}}\leq\vfnorm[\mu]{V-V_{m}}+\vfnorm[\mu]{V_{m}-V_{n}}\leq2\epsilon$,
	we conclude that $V_{n}$ converges to $V$ in $\mathscr{T}(\mu)$.
	
	It remains to show (\ref{eq:pf-hsvfc-1}). To do so, we choose $\{n_{k}\}$
	so that $\vfnorm[\mu]{V_{n_{k}}-V_{n_{k+1}}}\leq2^{-k}$. This is
	possible since $V_{n}$ is a Cauchy sequence. Let $U\in\mathscr{T}(\mu)$.
	By Cauchy--Schwarz inequality, $\int_{\tdomain}|U(t)|\cdot|V_{n_{k}}(t)-V_{n_{k+1}}(t)|\diffop\upsilon(t)\leq\vfnorm[\mu]U\vfnorm[\mu]{V_{n_{k}}-V_{n_{k+1}}}\leq2^{-k}\vfnorm[\mu]U$.
	Thus, 
	$\sum_{k}\int_{\tdomain}|U(t)|\cdot|V_{n_{k}}(t)-V_{n_{k+1}}(t)|\diffop\upsilon(t)\leq\vfnorm[\mu]U<\infty.$
	Then (\ref{eq:pf-hsvfc-1}) follows, because otherwise, if the series
	diverges on a set $A$ with $\upsilon(A)>0$, then a choice of $U$
	such that $|U(t)|>0$ for $t\in A$ contradicts the above inequality.
	
	Now let $\vec{E}$ be a measurable orthonormal frame. For every element
	$U\in\mathscr{T}(\mu)$, the coordinate representation of $U$ with
	respect to $\vec E$ is denoted by $U_{\vec E}$. One can see that
	$U_{\vec E}$ is an element in the Hilbert space $\ltwo(\tdomain,\real^{\dim})$
	of square integrable $\real^{\dim}$-valued measurable functions with
	norm $\|f\|_{\ltwo}=\{\int_{\tdomain}|f(t)|^{2}\diffop\upsilon(t)\}^{1/2}$
	for $f\in\ltwo(\tdomain,\real^{\dim})$. If we define the map $\Upsilon:\mathscr{T}(\mu)\rightarrow\ltwo(\tdomain,\real^{\dim})$
	by $\Upsilon(U)=U_{\vec E}$, we can immediately see that $\Upsilon$
	is a linear map. It is also surjective, because for any $f\in\ltwo(\tdomain,\real^{\dim})$,
	the vector field $U$ along $\mu$ given by $U_{f}(t)=f(t)\vec E(\mu(t))$
	for $t\in\tdomain$ is an element in $\mathscr{T}(\mu)$, since $\vfnorm{U_{f}}=\|f\|_{\ltwo}$.
	It can be also verified that $\Upsilon$ preserves the inner product.
	Therefore, it is a Hilbertian isomorphism. Since $\ltwo(\tdomain,\real^{\dim})$
	is separable, the isomorphism between $\ltwo(\tdomain,\real^{\dim})$
	and $\mathscr{T}(\mu)$ implies that $\mathscr{T}(\mu)$ is also separable.
\end{proof}
\begin{proof}[Proof of Proposition \ref{prop:property-Gamma-operator}] The regularity conditions on $f$, $h$ and $\gamma$ ensure that $\Gamma$ and $\Phi$ are measurable. 
	Part \ref{prop:property-Gamma-operator-enu-1}, \ref{prop:property-Gamma-operator-enu-2}
	and \ref{prop:property-Gamma-operator-enu-6} can be deduced from
	the fact that $\mathcal{P}_{f,h}$ is a unitary operator between two
	finite-dimensional real Hilbert spaces and its inverse is $\mathcal{P}_{h,f}$.
	To reduce notational burden, we shall suppress the subscripts $f,h$
	from $\Gamma_{f,h}$ and $\Phi_{f,h}$ below. For Part \ref{prop:property-Gamma-operator-enu-3},
	\begin{align*}
	(\Phi\mathcal{A})(\Gamma U) & =\Gamma(\mathcal{A}\Gamma^{\ast}\Gamma U))=\Gamma(\mathcal{A}U).
	\end{align*}
	To prove Part \ref{prop:property-Gamma-operator-enu-4}, assume $V\in\mathscr{T}(g)$.
	Then, noting that $\Gamma(\Gamma^{\ast}V)=V$ and $\Gamma^{\ast}(\Gamma U)=U$,
	we have
	\begin{align*}
	(\Phi\mathcal{A})((\Phi\mathcal{A}^{-1})V) & =(\Phi\mathcal{A})(\Gamma(\mathcal{A}^{-1}\Gamma^{\ast}V)) =\Gamma(\mathcal{A}\Gamma^{\ast}(\Gamma(\mathcal{A}^{-1}\Gamma^{\ast}V))) \\ & =\Gamma(\mathcal{A}\mathcal{A}^{-1}\Gamma^{\ast}V)=\Gamma(\Gamma^{\ast}V)=V,
	\end{align*}
	and
	\begin{align*}
	(\Phi\mathcal{A}^{-1})(\Phi\mathcal{A}V) & =(\Phi\mathcal{A}^{-1})(\Gamma(\mathcal{A}\Gamma^{\ast}V)) =\Gamma(\mathcal{A}^{-1}\Gamma^{\ast}(\Gamma(\mathcal{A}\Gamma^{\ast}V))) \\ & =\Gamma(\mathcal{A}^{-1}\mathcal{A}\Gamma^{\ast}V)=\Gamma(\Gamma^{\ast}V)=V.
	\end{align*}
	Part \ref{prop:property-Gamma-operator-enu-5} is seen by the following
	calculation: for $V\in\mathscr{T}(g)$,
	\begin{align*}
	(\Phi_{f,g}\sum c_{k}\varphi_{k}\otimes\varphi_{k})V & =\Gamma(\sum c_{k}\vfinnerprod[f]{\varphi_{k},\Gamma^{\ast}V}\varphi_{k})=\sum c_{k}\vfinnerprod[f]{\phi_{k},\Gamma^{\ast}V}\Gamma\varphi_{k}\\
	& =\sum c_{k}\vfinnerprod[g]{\Gamma\varphi_{k},V}\Gamma\varphi_{k}=\left(\sum c_{k}\Gamma\varphi_{k}\otimes\Gamma\varphi_{k}\right)V.
	\end{align*}
\end{proof}
\begin{proof}[Proof of Proposition \ref{prop:truncation-residual}]
	The case $\kappa\geq0$ is already given by \citet{Dai2017} with
	$C=1$. Suppose $\kappa<0$. The second statement follows from the
	first one if we let $O=\mu(t)$, $P=X(t)$ and $Q=X_{K}(t)$ for any
	fixed $t$ and note that $C$ is independent of $t$. 
	
	For the first statement, the inequality is clearly true if $P=O$,
	$Q=O$ or $P=Q$. Now suppose $O$, $P$ and $Q$ are distinct points
	on $\manifold$. The minimizing geodesic curves between these points
	form a geodesic triangle on $\manifold$. By Toponogov's theorem (the
	hinge version), $d_{\manifold}(P,Q)\leq d_{\mathbb{M}_{\kappa}}(P^{\prime},Q^{\prime})$,
	where $\mathbb{M}_{\kappa}$ is the model space with constant sectional
	curvature $\kappa$. For $\kappa<0$, it is taken as the hyperboloid
	with curvature $\kappa$. Let $a=d_{\manifold}(O,P)$, $b=d_{\manifold}(O,Q)$
	and $c=d_{\manifold}(P,Q)$. The interior angle of geodesics connecting
	$O$ to $P$ and $O$ to $Q$ is denoted by $\gamma$. Denote $\delta=\sqrt{-\kappa}$,
	the law of cosine on $\mathbb{M}_{\kappa}$ gives 
	\begin{align*}
	\cosh(\delta c) =& \{\cosh(\delta a)\cosh(\delta b)-\sinh(\delta a)\sinh(\delta b)\}+\\
	& \{\sinh(\delta a)\sinh(\delta b)(1-\cos\gamma)\}\\
	\equiv & I_{1}+I_{2}.
	\end{align*}
	By definition that $\cosh(x)=(e^{x}+e^{-x})/2$ and $\sinh(x)=(e^{x}-e^{-x})/2$,
	$I_{1}=(e^{\delta a-\delta b}+e^{\delta b-\delta a})/2$. By Taylor
	expansion $e^{x}=\sum x^{k}/k!$, we have 
	\begin{equation}
	I_{1}=\sum_{k=0}^{\infty}\frac{(\delta a-\delta b)^{2k}}{(2k)!}\leq e^{\delta^{2}(a-b)^{2}}\leq e^{\delta^{2}h^{2}},\label{eq:pf-prop-truncation-residual-1}
	\end{equation}
	where $h=|\Log_{O}P-\Log_{O}Q|$ with $|\cdot|$ denotes the norm of tangent vectors. The last inequality is due to the
	fact that $a=|\Log_{O}P|$, $b=|\Log_{O}Q|$. 
	
	For the second term $I_{2}$, we first have $\sinh(\delta a)\sinh(\delta b)\leq\delta^{2}abe^{\delta^{2}a^{2}+\delta^{2}b^{2}}$.
	Also, the Euclidean law of cosine implies that $h^{2}=a^{2}+b^{2}-2ab\cos\gamma$
	and $2ab(1-\cos\gamma)=h^{2}-(a-b)^{2}\leq h^{2}$. Therefore, 
	\[
	I_{2}\leq\delta^{2}h^{2}e^{\delta^{2}a^{2}+\delta^{2}b^{2}}/2\leq\delta^{2}h^{2}e^{\delta^{2}h^{2}+2\delta^{2}ab\cos\gamma}/2\leq\delta^{2}h^{2}e^{\delta^{2}h^{2}}e^{2D^{2}\delta^{2}}/2,
	\]
	where $D$ is the diameter of $\mathcal{K}$. Combining with (\ref{eq:pf-prop-truncation-residual-1}),
	\begin{align*}
	I_{1}+I_{2} & \leq e^{\delta^{2}h^{2}}+\delta^{2}h^{2}e^{\delta^{2}h^{2}}e^{2D^{2}\delta^{2}}/2\leq e^{\delta^{2}h^{2}}(1+A\delta^{2}h^{2})/2 \\
	& \leq e^{\delta^{2}h^{2}+A\delta^{2}h^{2}}=e^{(1+A)\delta^{2}h^{2}}=e^{Bh^{2}},
	\end{align*}
	where $A=e^{2D^{2}\delta^{2}}$ and $B=(1+A)\delta^{2}$ constants
	uniform for $t$. Therefore 
	\[
	\delta c=\cosh^{-1}(I_{1}+I_{2})=\log(I_{1}+I_{2}+\sqrt{(I_{1}+I_{2})^{2}-1})\leq\log(e^{Bh^{2}}+\sqrt{e^{2Bh^{2}}-1}).
	\]
	It can be shown that $\sqrt{e^{2Bx}-1}\leq e^{Bx}\sqrt{2Bx}$. Thus,
	\begin{align*}
	\delta c & \leq\log(e^{Bh^{2}}(1+\sqrt{2B}h))\leq\log(e^{Bh^{2}+\sqrt{2B}h})\\
	& \leq Bh^{2}+\sqrt{2B}h\leq2BDh+\sqrt{2B}h=\sqrt{C}\delta h,
	\end{align*}
	where $C=\{(2BD+\sqrt{2B})/\delta\}^{2}$, or in other words, $d_{\manifold}(P,Q)\leq\sqrt{C}|\Log_{O}P-\Log_{O}Q|$.
\end{proof}
\begin{proof}[Proof of Proposition \ref{prop:fpca-frame}]
	Part \ref{enu:prop:fpca-frame-1} follows from a simple calculation.
	To lighten notations, let $\mathbf{f}^{T}\vec E$ denote $\mathbf{f}^{T}(\cdot)\vec E(\mu(\cdot))$
	for a $\real^{\dim}$ valued function defined on $\tdomain$. Suppose
	$\phi_{\vec E,k}$ is the coordinate of $\phi_{k}$ under $\vec E$.
	Because $$(\covarop_{\vec E}\phi_{\vec E,k})^{T}\vec E=\expect\metric{Z_{\vec E}}{\phi_{\vec E,k}}Z_{\vec E}\vec E=\expect\vfinnerprod{\Log_{\mu}X,\phi_{k}}\Log_{\mu}X=\lambda_{k}\phi_{k}=\lambda_{k}\phi_{\vec E,k}^{T}\vec E,$$
	one concludes that $\covarop_{\vec E}\phi_{\vec E,k}=\lambda_{k}\phi_{\vec E,k}$
	and hence $\phi_{\vec E,k}$ is an eigenfunction of $\covarop_{\vec E}$
	corresponding to the eigenvalue $\lambda_{k}$. Other results in Part
	\ref{enu:prop:fpca-frame-2} and \ref{enu:prop:fpca-frame-3} have
	been derived in Section \ref{sec:Intrinsic-Representation}. The continuity
	of $X$ and $\vec E$, in conjunction with $\expect\vfnorm[][2]{\Log_{\mu}X}<\infty$,
	implies that $Z_{\vec E}$ is a mean square continuous random process
	and the joint measurability of $X$ passes to $Z_{\vec E}$. Then
	$Z_{\vec E}$ can be regarded a random element of the Hilbert space
	$\ltwo(\tdomain,\mathscr{B}(\tdomain),\upsilon)$ that is isomorphic
	to $\mathscr{T}(\mu)$. Also, the isomorphism maps $Z_{\vec E}$
	to $X$ for each $\omega$ in the sample space. Then, Part \ref{enu:prop:fpca-frame-4}
	follows from Theorem 7.4.3 of \citet{Hsing2015}.
\end{proof}
\begin{proof}[Proof of Theorem \ref{thm:property-mean-function}]
	The strong consistency stated in Part \ref{enu:thm:mean-curve-2} is an immediate consequence of Lemma \ref{lem:neighbor-tube-mu-hat}. For Part \ref{enu:thm:mean-curve-1}, 
	to prove continuity of $\mu$, fix $t\in\tdomain$. Let $\mathcal{K}\supset\mathcal{U}$
	be compact. By \ref{cond:uniform-second-moment}, $c\define\sup_{p\in\mathcal{K}}\sup_{s\in\tdomain}\expect d_{\manifold}^{2}(p,X(s))<\infty$.
	Thus,
	\begin{align*}
	|F(\mu(t),s)-F(\mu(s),s)| & \leq|F(\mu(t),t)-F(\mu(s),s)|+|F(\mu(t),s)-F(\mu(t),t)|\\
	& \leq\sup_{p\in\mathcal{K}}|F(p,t)-F(p,s)|+2c\expect d_{\manifold}(X(s),X(t)) \\
	& \leq4c\expect d_{\manifold}(X(s),X(t)).
	\end{align*}
	The {continuity assumption of sample paths} implies
	$\expect d_{\manifold}(X(s),X(t))\rightarrow0$ as $s\rightarrow t$.
	Then by {condition \ref{cond:separability}}, $d_{\manifold}(\mu(t),\mu(s))\rightarrow0$
	as $s\rightarrow t$, and the the continuity of $\mu$ follows. The
	uniform continuity follows from the compactness of $\tdomain$. Given Lemma
	\ref{lem:sup-sup-F-n-F} and \ref{lem:neighbor-tube-mu-hat}, the
	a.s. continuity of $\hat{\mu}$ can be derived in a similar way. The first statement of Part \ref{enu:thm:mean-curve-4} is a corollary of Part \ref{enu:thm:mean-curve-3}, while the second statement follows from the first one and the compactness of $\tdomain$. It remains to show Part \ref{enu:thm:mean-curve-3} in order to conclude the proof, as follows.
	
	Let $V_{t,i}(p)=\Log_{p}X_{i}(t)$ and $\gamma_{t,p}$
	be the minimizing geodesic from $\mu(t)$ to $p\in\manifold$ at unit
	time. The first-order Taylor series expansion at $\mu(t)$ yields
	\begin{alignat}{1}
	\mathcal{P}_{\hat{\mu}(t),\mu(t)}\sum_{i=1}^{n}V_{t,i}(\hat{\mu}(t)) & =\sum_{i=1}^{n}V_{t,i}(\mu(t))+\sum_{i=1}^{n}\nabla_{\gamma_{t,\hat{\mu}(t)}^{\prime}(0)}V_{t,i}(\mu(t))+\Delta_{t}(\hat{\mu}(t))\gamma_{t,\hat{\mu}(t)}^{\prime}(0)\nonumber \\
	& =\sum_{i=1}^{n}V_{t,i}(\mu(t))-\sum_{i=1}^{n}H_{t}(\mu(t))\gamma_{t,\hat{\mu}(t)}^{\prime}(0)+\Delta_{t}(\hat{\mu}(t))\gamma_{t,\hat{\mu}(t)}^{\prime}(0),\label{eq:mean-pf-0}
	\end{alignat}
	where an expression for $\Delta_t$ is provided in the proof of Lemma \ref{lem:delta-operator}. 
	
	
	Since $\sum_{i=1}^{n}V_{t,i}(\hat{\mu}(t))=\sum_{i=1}^{n}\Log_{\hat{\mu}(t)}X_{i}(t)=0$,
	we deduce from (\ref{eq:mean-pf-0}) that
	\begin{equation}
	\frac{1}{n}\sum_{i=1}^{n}\Log_{\mu(t)}X_{i}(t)-\left(\frac{1}{n}\sum_{i=1}^{n}H_{t,i}(\mu(t))-\frac{1}{n}\Delta_{t}(\hat{\mu}(t))\right)\Log_{\mu(t)}\hat{\mu}(t)=0.\label{eq:mean-pf-6}
	\end{equation}
	By LLN, $\frac{1}{n}\sum_{i=1}^{n}H_{t,i}(\mu(t))\rightarrow\expect H_{t}(\mu(t))$
	in probability, while $\expect H_{t}(\mu(t))$ is invertible for all
	$t$ by condition \ref{cond:isolation-minima}. In light of Lemma \ref{lem:delta-operator},
	this result suggests that with probability tending to one, for all $t\in\tdomain$, $\frac{1}{n}\sum_{i=1}^{n}H_{t,i}(\mu(t))-\frac{1}{n}\Delta_{t}(\hat{\mu}(t))$
	is invertible, and also 
	\[
	\left(\frac{1}{n}\sum_{i=1}^{n}H_{t,i}(\mu(t))-\frac{1}{n}\Delta_{t}(\hat{\mu}(t))\right)^{-1}=\{\expect H_{t}(\mu(t))\}^{-1}+\op(1),
	\]
	and according to (\ref{eq:mean-pf-6}), 
	\[
	\Log_{\mu(t)}\hat{\mu}(t)=\{\expect H_{t}(\mu(t))\}^{-1}\left(\frac{1}{n}\sum_{i=1}^{n}\Log_{\mu(t)}X_{i}(t)\right)+\op(1),
	\] 
	where the $\op(1)$ terms do not depend on $t$. Given this, we can now conclude the proof of Part \ref{enu:thm:mean-curve-3} by applying
	a central limit theorem in Hilbert spaces \citep{Aldous1976} to establish that the process 
	$\frac{1}{\sqrt{n}}\sum_{i=1}^{n}\{\expect H_{t}(\mu(t))\}^{-1}\Log_{\mu(t)}X_{i}(t)$ converges to a Gaussian
	measure on tensor Hilbert space $\mathscr{T}(\mu)$ with \hyphenation{covariance} \hyphenation{operator} $\mathcal{C}(\cdot)=\expect(\vfinnerprod[\mu]{V,\,\cdot\,}V)$
	for a random element $V(t)=\{\expect H_{t}(\mu(t))\}^{-1}\Log_{\mu(t)}X(t)$ in the tensor Hilbert space $\mathscr{T}(\mu)$.
\end{proof}
\begin{proof}[Proof of Theorem \ref{thm:explict-bound-eigenfunction}]
	Note that
	\begin{align*}
	\Phi\hat{\covarop}-\covarop= & n^{-1}\sum(\Gamma\Log_{\hat{\mu}}X_{i})\otimes(\Gamma\Log_{\hat{\mu}}X_{i})-\covarop\\
	= & n^{-1}\sum(\Log_{\mu}X_{i})\otimes(\Log_{\mu}X_{i})-\covarop\\
	& +n^{-1}\sum(\Gamma\Log_{\hat{\mu}}X_{i}-\Log_{\mu}X_{i})\otimes(\Log_{\mu}X_{i})\\
	& +n^{-1}\sum(\Log_{\mu}X_{i})\otimes(\Gamma\Log_{\hat{\mu}}X_{i}-\Log_{\mu}X_{i})\\
	& +n^{-1}\sum(\Gamma\Log_{\hat{\mu}}X_{i}-\Log_{\mu}X_{i})\otimes(\Gamma\Log_{\hat{\mu}}X_{i}-\Log_{\mu}X_{i})\\
	\equiv & A_{1}+A_{2}+A_{3}+A_{4}.
	\end{align*}
	For $A_{2}$, it is seen that 
	\begin{align*}
	\opnorm[HS][2]{A_{2}} & \leq\mathrm{const.}\frac{1}{n^{2}}\sum_{i=1}^{n}\sum_{j=1}^{n}(\vfnorm[][2]{\Log_{\mu}X_{i}}+\vfnorm[][2]{\Log_{\mu}X_{j}})\times \\
	& (\vfnorm[][2]{\Gamma\Log_{\hat{\mu}}X_{i}-\Log_{\mu}X_{i}}+\vfnorm[][2]{\Gamma\Log_{\hat{\mu}}X_{j}-\Log_{\mu}X_{j}}).
	\end{align*}

	With smoothness of $d_{\manifold}^2$, continuity of $\mu$ and compactness of $\tdomain$, one can show that $\sup_{t\in\tdomain}\|H_t(\mu(t))\|<\infty$. By the uniform consistency of $\hat{\mu}$, with the same Taylor series expansion in \eqref{eq:mean-pf-0} and the technique in the proof of Lemma \ref{lem:delta-operator}, it can be established that $n^{-1}\sum_{i=1}^n\vfnorm[][2]{\Gamma\Log_{\hat{\mu}}X_{i}-\Log_{\mu}X_{i}}\vfnorm[][2]{\Log_{\mu}X_{i}}\leq\mathrm{const.}(1+\op(1))\sup_{t\in\tdomain}d_{\manifold}^{2}(\hat{\mu}(t),\mu(t))$. Also note that by LLN, $n^{-1}\sum_{j=1}^n\vfnorm[][2]{\Log_{\mu}X_{j}}=\Op(1)$.
	Then, with Part \ref{enu:thm:mean-curve-4} of Theorem \ref{thm:property-mean-function},
	$$\opnorm[HS][2]{A_{2}}\leq \mathrm{const.}\{4+\op(1)+\Op(1)\}\sup_{t\in\tdomain}d_{\manifold}^{2}(\hat{\mu}(t),\mu(t))=\Op(1/n).$$
	Similar calculation shows that $\opnorm[HS][2]{A_{3}}=\Op(1/n)$
	and $\opnorm[HS][2]{A_{4}}=\Op(1/n^{2})$. Now, by \citet{Dauxois1982},
	$\vfnorm[HS][2]{n^{-1}\sum(\Log_{\mu}X_{i})\otimes(\Log_{\mu}X_{i})-\covarop}=\Op(1/n)$.
	Thus, $\vfnorm[HS][2]{\Phi\hat{\covarop}-\covarop}=\Op(1/n)$. According to Part 1 \& 5 of Proposition 2, $\hat{\lambda}_k$ are also eigenvalues of $\Phi\hat{\covarop}$.
	The results for $\hat{\lambda}_k$ and $(J,\delta_{j})$ follow from \citet{Bosq2000}.
	Those for $(\hat{J},\hat{\delta}_{j})$ are due to $\sup_{k\geq1}|\hat{\lambda}_{k}-\lambda_{k}|\leq\opnorm[HS]{\hat{\covarop}\ominus_{\Phi}\covarop}$.
\end{proof}

\begin{proof}[Proof of Theorem \ref{thm:fpcr-estimator}]
	In this proof, both $\op(\cdot)$ and $\Op(\cdot)$ are understood
	to be uniform for the class $\mathcal{F}$. Let $\check{\beta}=\Exp_{\mu}\sum_{k=1}^{K}\hat{b}_{k}\Gamma\hat{\phi}_{k}$.
	Then
	\begin{align*}
	d_{\manifold}^{2}(\hat{\beta},\beta) & \leq2d_{\manifold}^{2}(\hat{\beta},\check{\beta})+2d_{\manifold}^{2}(\check{\beta},\beta).
	\end{align*}
	The first term is of order $O_{p}(1/n)$ uniform for the class
	$\mathcal{F}$, according to a technique similar to the one in the proof of Lemma \ref{lem:delta-operator}, as well as  Theorem \ref{thm:property-mean-function} (note that the results
	in Theorem \ref{thm:property-mean-function} are uniform for the class
	$\mathcal{F}$). Then the convergence rate is established if one can
	show that $$d_{\manifold}^{2}(\check{\beta},\beta)=\Op\left(n^{-\frac{2\varrho-1}{2\varrho+4\alpha+2}}\right),$$
	which follows from 
	\begin{align}
	\vfnorm[\mu][2]{\sum_{k=1}^{K}\hat{b}_{k}\Gamma\hat{\phi}_{k}-\sum_{k=1}^{\infty}b_{k}\phi_{k}} & =\Op\left(n^{-\frac{2\varrho-1}{2\varrho+4\alpha+2}}\right)\label{eq:pf-fpca-estimator-1}
	\end{align}
	and Proposition \ref{prop:truncation-residual}. It remains to show
	(\ref{eq:pf-fpca-estimator-1}).
	
	We first observe that because $b_{k}\leq Ck^{-\varrho}$,
	\begin{align}
	\vfnorm[\mu][2]{\sum_{k=1}^{K}\hat{b}_{k}\Gamma\hat{\phi}_{k}-\sum_{k=1}^{\infty}b_{k}\phi_{k}} & \leq2\vfnorm[\mu][2]{\sum_{k=1}^{K}\hat{b}_{k}\Gamma\hat{\phi}_{k}-\sum_{k=1}^{K}b_{k}\phi_{k}}+O(K^{-2\varrho+1}).\label{eq:pf-fpca-estimator-2}
	\end{align}
	Define 
	$$A_{1} =\sum_{k=1}^{K}(\hat{b}_{k}-b_{k})\phi_{k},\,\,\,\,\,\,\,\,\,
	A_{2} =\sum_{k=1}^{K}b_{k}(\Gamma\hat{\phi}_{k}-\phi_{k}),\,\,\,\,\,\,\,\,\,
	A_{3} =\sum_{k=1}^{K}(\hat{b}_{k}-b_{k})(\Gamma\hat{\phi}_{k}-\phi_{k}).$$
	Then $$\vfnorm[\mu][2]{\sum_{k=1}^{K}\hat{b}_{k}\Gamma\hat{\phi}_{k}-\sum_{k=1}^{K}b_{k}\phi_{k}}\leq2\vfnorm[][2]{A_{1}}+2\vfnorm[][2]{A_{2}}+2\vfnorm[][2]{A_{3}}.$$
	It is clear that the term $A_{3}$ is asymptotically dominated by
	$A_{1}$ and $A_{2}$. Note that the compactness of $X$ in condition \ref{cond:X-moment-flr}  implies $\expect\vfnorm[][4]{\Log_{\mu}X}<\infty$. Then, by Theorem \ref{thm:explict-bound-eigenfunction}, for $A_{2}$, we have the bound
	\begin{align*}
	\vfnorm[][2]{A_{2}} & \leq2\sum_{k=1}^{K}b_{k}^{2}\vfnorm[][2]{\Gamma\hat{\phi}_{k}-\phi_{k}}=\begin{cases}
	\Op\left(\frac{1}{n}K^{-2\varrho+2\alpha+3}\right) & \text{if }\alpha>\varrho-3/2,\\
	\Op\left(\frac{1}{n}\log K\right) & \text{if }\alpha=\varrho-3/2,\\
	\Op\left(\frac{1}{n}\right) & \text{if \ensuremath{\alpha}<\ensuremath{\varrho}-3/2}.
	\end{cases}
	\end{align*}
	It is easy to verify that, because $K\asympeq n^{1/(4\alpha+2\varrho+2)}$,
	$\vfnorm[][2]{A_{2}}$ is asymptotically dominated by $K^{-2\varrho+1}$.
	Thus, 
	\[
	\vfnorm[][2]{A_{2}}=\Op\left(n^{-\frac{2\varrho-1}{2\varrho+4\alpha+2}}\right).
	\]
	
	Now we focus on the term $A_{1}$. Note that $\hat{a}_{k}=\vfinnerprod[\hat{\mu}]{\hat{\chi},\hat{\phi}_{k}}=\vfinnerprod[\mu]{\Gamma\hat{\chi},\Gamma\hat{\phi}_{k}}$.
	Then
	\begin{align*}
	\vfnorm[][2]{A_{1}} & =\sum_{k=1}^{K}(\hat{b}_{k}-b_{k})^{2}=\sum_{k=1}^{K}\left(\hat{\lambda}_{k}^{-1}\vfinnerprod[\mu]{\Gamma\hat{\chi},\Gamma\hat{\phi}_{k}}-b_{k}\right)^{2}.
	\end{align*}
	With Theorem
	\ref{thm:property-mean-function} and CLT, it can be shown that 
	$\vfinnerprod[\mu]{\Gamma\hat{\chi},\Gamma\hat{\phi}_{k}}=\lambda_{k}b_{k}+\Op\left(k^{\alpha+1}n^{-1/2}\right).$
	Note that $\sup_{k}|\hat{\lambda}_{k}-\lambda_{k}|^{2}=\Op(1/n)$
	which implies that $\hat{\lambda}_{k}>\lambda_{k}-\Op(1/\sqrt n)$
	uniformly for all $k$. In conjunction with the choice of $K\asympeq n^{1/(4\alpha+2\varrho+2)}$,
	one can conclude that
	\begin{align*}
	\vfnorm[][2]{A_{1}} & =\sum_{k=1}^{K}\left(\frac{\lambda_{k}-\hat{\lambda}_{k}}{\hat{\lambda}_{k}}b_{k}\right)^{2}+\Op\left(\frac{1}{n}\right)\sum_{k=1}^{K}k^{2\alpha+2}\hat{\lambda}_{k}^{-2} \\ & \leq\Op\left(\frac{1}{n}\right)\sum_{k=1}^{K}\lambda_{k}^{-2}b_{k}^{2}+\Op\left(\frac{1}{n}\right)\sum_{k=1}^{K}k^{2\alpha+2}\lambda_{k}^{-2}\\
	& =\Op\left(\frac{1}{n}\right)\sum_{k=1}^{K}k^{2\alpha+2}\lambda_{k}^{-2}=\Op\left(\frac{K^{4\alpha+3}}{n}\right) =\Op\left(n^{-\frac{2\varrho-1}{2\varrho+4\alpha+1}}\right).
	\end{align*}
	Thus, 
	\[
	\vfnorm[\mu][2]{\sum_{k=1}^{K}\hat{b}_{k}\Gamma\hat{\phi}_{k}-\sum_{k=1}^{K}b_{k}\phi_{k}}=\Op\left(n^{-\frac{2\varrho-1}{2\varrho+4\alpha+2}}\right).
	\]
	Finally, (\ref{eq:pf-fpca-estimator-1}) follows from the above equation,
	the equation (\ref{eq:pf-fpca-estimator-2}) and $
	K^{-2\varrho+1}=O\left(n^{-\frac{2\varrho-1}{2\varrho+4\alpha+2}}\right)$.
\end{proof}

\begin{proof}[Proof of Theorem \ref{thm:tikhonov-estimator-flr}]
	In this proof, both $\op(\cdot)$ and $\Op(\cdot)$ are understood
	to be uniform for the class $\mathcal{G}$. First, $d_{\manifold}^{2}(\hat{\beta},\beta)\leq2d_{\manifold}^{2}(\hat{\beta},\Exp_{\mu}\Gamma(\hat{\covarop}^{+}\hat{\chi}))+2d_{\manifold}^{2}(\Exp_{\mu}\Gamma(\hat{\covarop}^{+}\hat{\chi}),\beta)$.
	The first term is of order $O_{p}(1/n)$ uniform for the class
	$\mathcal{G}$, according to a technique similar to the one in the proof of Lemma \ref{lem:delta-operator}, as well as Theorem \ref{thm:property-mean-function} (note that the results
	in Theorem \ref{thm:property-mean-function} are uniform for the class
	$\mathcal{G}$). For the second term, using Proposition \ref{prop:property-Gamma-operator},
	one can show that $\Phi\hat{\covarop}^{+}=(\Phi\hat{\covarop}+\rho\Phi I_{\hat{\mu}})^{-1}=(\Phi\hat{\covarop}+\rho I_{\mu})^{-1}$,
	where $I_{\hat{\mu}}$ and $I_{\mu}$ denote the identity operators
	on $\mathscr{T}(\hat{\mu})$ and $\mathscr{T}(\mu)$, respectively.
	It remains to show $d_{\manifold}^{2}(\Exp_{\mu}\Gamma(\hat{\covarop}^{+}\hat{\chi}),\beta)=\Op(n^{-(2\varrho-\alpha)/(2\varrho+\alpha)})$.
	
	Define $\covarop_{\rho}^{+}=(\covarop+\rho I_{\mu})^{-1}$, $\chi_{n}=n^{-1}\sum_{i=1}^{n}(Y_{i}-\bar{Y})\Log_{\mu}X_{i}$,
	and 
	\begin{align*}
	A_{n1} & =\covarop_{\rho}^{+}\chi_{n}+\covarop_{\rho}^{+}(\hat{\chi}-\chi_{n})\equiv A_{n11}+A_{n12},\\
	A_{n2} & =(\Phi\hat{\covarop}^{+}-\covarop_{\rho}^{+})\chi_{n}+(\Phi\hat{\covarop}^{+}-\covarop_{\rho}^{+})(\hat{\chi}_{n}-\chi_{n})\equiv A_{n21}+A_{n22}.
	\end{align*}
	It is easy to see that the dominant term is $A_{n11}$ and $A_{n21}$
	for $A_{n1}$ and $A_{n2}$, respectively. With $\rho=n^{-\alpha/(2\varrho+\alpha)}$,
	it has been shown in \citet{Hall2007c} that $\expect\vfnorm[\mu][2]{A_{n1}-\Log_{\mu}\beta}=O(n^{-(2\varrho-1)/(2\varrho+\alpha)})$. 
	Denote $\Delta=\Phi\hat{\covarop}-\covarop_{\rho}^{+}$. Then 
	\begin{align*}
	A_{n21} & =(\Phi\hat{\covarop}^{+}-\covarop_{\rho}^{+})\chi_{n} =-(I_{\mu}+\covarop_{\rho}^{+}\Delta)^{-1}\covarop_{\rho}^{+}\Delta\covarop_{\rho}^{+}\chi_{n}\\
	& =-(I_{\mu}+\covarop_{\rho}^{+}\Delta)^{-1}\covarop_{\rho}^{+}\Delta\Log_{\mu}\beta-(I_{\mu}+\covarop_{n}^{+}\Delta)^{-1}\covarop_{\rho}^{+}\Delta(\covarop_{\rho}^{+}\chi_{n}-\Log_{\mu}\beta)\\
	& \equiv A_{n211}+A_{n212}.
	\end{align*}
	By Theorem \ref{thm:explict-bound-eigenfunction}, $\opnorm[\mu]{\Delta}=\Op(1/n)$.
	Also, one can see that $\opnorm[\mu]{(I_{\mu}+\covarop_{\rho}^{+}\Delta)^{-1}}=\Op(1)$,
	with the assumption that {$\rho^{-1}/n=o(1)$}.
	Also, $\opnorm[op]{(I_{\mu}+\covarop_{\rho}^{+}\Delta)^{-1}\covarop_{\rho}^{+}\Delta}=\Op(\rho^{-2}/n)$.
	Using the similar technique in \citet{Hall2005}, we can show that
	$\vfnorm[\mu][2]{\covarop_{\rho}^{+}\chi_{n}-\Log_{\mu}\beta}=\Op(n^{-(2\varrho-1)/(2\varrho+\alpha)})$,
	and hence conclude that $\vfnorm[\mu][2]{A_{n212}}=\Op(n^{-(2\varrho-1)/(2\varrho+\alpha)})$.
	For $A_{n211}$, 
	\begin{align*}
	\vfnorm[\mu][2]{A_{n211}} & =\vfnorm[\mu][2]{(I_{\mu}+\covarop_{n}^{+}\Delta)^{-1}\covarop_{n}^{+}\Delta\Log_{\mu}\beta}\\ & \leq\opnorm[op][2]{(I_{\mu}+\covarop_{n}^{+}\Delta)^{-1}}\opnorm[op][2]{\covarop_{n}^{+}\Delta}\vfnorm[\mu][2]{\Log_{\mu}\beta} =\Op(n^{-(2\varrho-\alpha)/(2\varrho+\alpha)}).
	\end{align*}
	Combining all results above, we deduce that $\vfnorm[\mu][2]{\Gamma(\hat{\covarop}^{+}\hat{\chi})-\Log_{\mu}\beta}=\Op(n^{-(2\varrho-\alpha)/(2\varrho+\alpha)})$
	and thus $$d_{\manifold}^{2}(\Exp_{\mu}\Gamma(\hat{\covarop}^{+}\hat{\chi}),\beta)=\Op(n^{-(2\varrho-\alpha)/(2\varrho+\alpha)}),$$
	according to condition \ref{cond:X-moment-flr} and Proposition \ref{prop:truncation-residual}.
\end{proof}

\section{Ancillary Lemmas}

\begin{lem}\label{lem:delta-operator}$\sup_{t\in\tdomain}n^{-1}\|\Delta_{t}(\hat{\mu}(t))\|=\op(1)$, where $\Delta_t$ is as in \eqref{eq:mean-pf-0}.
\end{lem}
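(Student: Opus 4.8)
The plan is to recognize $\Delta_{t}(\hat{\mu}(t))$ as the operator that carries the second-order remainder in the first-order Taylor expansion \eqref{eq:mean-pf-0}, and to bound its norm by the product of a second-derivative term, controlled through moments, and the distance $d_{\manifold}(\hat{\mu}(t),\mu(t))$, controlled through the strong consistency of $\hat{\mu}$. Writing $\gamma=\gamma_{t,\hat{\mu}(t)}$ for the minimizing geodesic from $\mu(t)$ to $\hat{\mu}(t)$ and setting $g_{i}(s)=\mathcal{P}_{\gamma(s),\mu(t)}V_{t,i}(\gamma(s))$, each $g_{i}$ is a curve in the fixed vector space $\tangentspace{\mu(t)}$ with $g_{i}(0)=V_{t,i}(\mu(t))$, $g_{i}(1)=\mathcal{P}_{\hat{\mu}(t),\mu(t)}V_{t,i}(\hat{\mu}(t))$ and $g_{i}'(0)=(\nabla_{\gamma'(0)}V_{t,i})(\mu(t))$, the last because parallel transport is the identity at $s=0$. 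Comparing with \eqref{eq:mean-pf-0}, I would identify
$$\Delta_{t}(\hat{\mu}(t))\gamma'(0)=\sum_{i=1}^{n}\{g_{i}(1)-g_{i}(0)-g_{i}'(0)\}=\sum_{i=1}^{n}\int_{0}^{1}(1-s)\,g_{i}''(s)\,\diffop s,$$
which is the integral form of Taylor's remainder inside $\tangentspace{\mu(t)}$.

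First I would compute $g_{i}''(s)=\mathcal{P}_{\gamma(s),\mu(t)}(\nabla^{2}_{\gamma'(s),\gamma'(s)}V_{t,i})(\gamma(s))$, using that $\gamma$ is a geodesic so $\nabla_{\gamma'}\gamma'=0$ and that parallel transport preserves norms. Since $\gamma'(s)=\mathcal{P}_{\mu(t),\gamma(s)}\gamma'(0)$ has constant length $|\gamma'(0)|=d_{\manifold}(\hat{\mu}(t),\mu(t))$, factoring out one copy of $\gamma'(0)$ exhibits $\Delta_{t}(\hat{\mu}(t))$ as a genuine operator on $\tangentspace{\mu(t)}$ with
$$\|\Delta_{t}(\hat{\mu}(t))\|\le\tfrac{1}{2}\Big(\sum_{i=1}^{n}M_{t,i}\Big)\,d_{\manifold}(\hat{\mu}(t),\mu(t)),\qquad M_{t,i}\define\sup_{s\in[0,1]}\|(\nabla^{2}V_{t,i})(\gamma(s))\|.$$
Recalling $V_{t,i}(p)=\mathrm{grad}_{p}(-d_{\manifold}^{2}(p,X_{i}(t))/2)$, the tensor $\nabla^{2}V_{t,i}$ is built from third covariant derivatives of the squared-distance function, so $M_{t,i}$ is dominated by a continuous function of $d_{\manifold}(\mu(t),X_{i}(t))$ and local curvature.

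The key quantitative input is then a uniform bound $\sup_{t\in\tdomain}n^{-1}\sum_{i=1}^{n}M_{t,i}=\Op(1)$. To obtain it I would confine everything to a compact set: by parts \ref{enu:thm:mean-curve-1} and \ref{enu:thm:mean-curve-2} of Theorem \ref{thm:property-mean-function} the image of $\mu$ is compact and $\hat{\mu}$ lies eventually in a fixed compact neighborhood of it (using also \ref{cond:boundedness-mean}), so for large $n$ the geodesics $\gamma$ stay in a compact $\mathcal{K}\subset\manifold$; condition \ref{cond:cut-locus-prime} keeps the relevant points a fixed distance $\epsilon_{0}$ off the cut locus of $X_{i}(t)$, where $d_{\manifold}^{2}(\cdot,X_{i}(t))$ is smooth and $\|(\nabla^{2}V_{t,i})(\gamma(s))\|\le C(1+d_{\manifold}^{2}(\mu(t),X_{i}(t)))$ uniformly on $\mathcal{K}$. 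A law of large numbers together with the moment bound \ref{cond:uniform-second-moment}, which gives $\sup_{t}\expect d_{\manifold}^{2}(p,X(t))<\infty$ for $p\in\mathcal{K}$, then yields $\sup_{t}n^{-1}\sum_{i}M_{t,i}=\Op(1)$.

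Combining the operator-norm bound with this estimate gives
$$\sup_{t\in\tdomain}n^{-1}\|\Delta_{t}(\hat{\mu}(t))\|\le\tfrac{1}{2}\Big(\sup_{t}n^{-1}\sum_{i=1}^{n}M_{t,i}\Big)\Big(\sup_{t}d_{\manifold}(\hat{\mu}(t),\mu(t))\Big)=\Op(1)\cdot\oas(1)=\op(1),$$
where $\sup_{t}d_{\manifold}(\hat{\mu}(t),\mu(t))=\oas(1)$ is exactly part \ref{enu:thm:mean-curve-2} of Theorem \ref{thm:property-mean-function}. The hard part will be the uniform (in both $i$ and $t$) control of the second covariant derivative $\nabla^{2}V_{t,i}$ on a possibly noncompact manifold: one must simultaneously confine the geodesics to a compact set via consistency and \ref{cond:boundedness-mean}, stay off the cut locus through \ref{cond:cut-locus-prime}, and dominate the third-derivative bound by an integrable envelope through \ref{cond:uniform-second-moment}; the curvature-dependent estimates for the derivatives of the logarithm map are the technically delicate ingredient.
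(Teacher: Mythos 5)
Your identification of $\Delta_{t}(\hat{\mu}(t))\gamma'(0)$ as the Taylor remainder of $g_{i}(s)=\mathcal{P}_{\gamma(s),\mu(t)}V_{t,i}(\gamma(s))$ is consistent with \eqref{eq:mean-pf-0}, but your route diverges from the paper's at the decisive point: you expand to second order and write the remainder as $\int_{0}^{1}(1-s)g_{i}''(s)\,\diffop s$, which forces you to control $\nabla^{2}V_{t,i}$, that is, \emph{third} covariant derivatives of $d_{\manifold}^{2}(\cdot,X_{i}(t))/2$. The paper stops at first order: it applies the mean value theorem componentwise in local orthonormal frames (whence the finite cover $\tdomain_{1},\ldots,\tdomain_{m}$ and the fields $W_{t,i}^{r,j}$), so that $\Delta_{t}(\hat{\mu}(t))$ is a sum of differences $\mathcal{P}_{q,\mu(t)}\nabla V_{t,i}(q)-\nabla V_{t,i}(\mu(t))$ with $q$ an intermediate point on the geodesic, and then only needs the modulus of continuity of the Hessian to vanish, $\lim_{\rho\downarrow0}\expect\sup_{q\in B(\mu(t),\rho)}\|\mathcal{P}_{q,\mu(t)}H_{t}(q)-H_{t}(\mu(t))\|=0$, combined with $\rho_{\max}=\oas(1)$ from uniform consistency. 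No differentiability of the Hessian, and no rate in $d_{\manifold}(\hat{\mu}(t),\mu(t))$, is used anywhere; the conclusion is only $\op(1)$, which is all the lemma claims.

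The genuine gap in your argument is the envelope $M_{t,i}=\sup_{s}\|(\nabla^{2}V_{t,i})(\gamma(s))\|$. The bound $\|(\nabla^{2}V_{t,i})(\gamma(s))\|\leq C(1+d_{\manifold}^{2}(\mu(t),X_{i}(t)))$ uniformly over a compact $\mathcal{K}$ is asserted, not proved, and it does not follow from \ref{cond:manifold-property}--\ref{cond:Lipschitz-moments}: these impose no upper bound on the sectional curvature, let alone on its derivatives, and the third derivatives of the squared-distance function are governed by exactly such data; moreover $X_{i}(t)$ ranges over a possibly unbounded set, so smoothness away from the cut locus on the compact $\mathcal{K}$ gives finiteness for each fixed sample but no integrable majorant. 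Condition \ref{cond:Lipschitz-moments} is a Lipschitz condition in $t$ on the sample paths, not a regularity condition on $p\mapsto\Log_{p}X(t)$, so it cannot supply this either. Without an integrable envelope for $\sup_{t}M_{t,i}$, your law-of-large-numbers step $\sup_{t}n^{-1}\sum_{i}M_{t,i}=\Op(1)$ has no justification. (The extra factor $d_{\manifold}(\hat{\mu}(t),\mu(t))$ your expansion produces would, if the envelope existed, deliver a rate rather than mere $\op(1)$ --- but the lemma does not need a rate, and the paper's first-order argument is designed precisely to avoid paying for one.) To repair the proof you would either have to add hypotheses (bounded geometry, or compactness of $\manifold$ and of the support of $X$), or retreat to the first-order mean value expansion and a dominated-convergence argument on the continuity of the Hessian, which is what the paper does.
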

\begin{proof}
	With the continuity of $\mu$ and compactness of $\tdomain$, the
	existence of local smooth orthnormal frames (e.g., Proposition 11.17
	of \citet{Lee2002}) suggests that we can find a finite open cover
	$\tdomain_{1},\ldots,\tdomain_{m}$ for $\tdomain$ such that there
	exists a smooth orthonormal frame $b_{j,1},\ldots,b_{j,d}$ for the
	$j$th piece $\{\mu(t):t\in\mathrm{cl}(\tdomain_{j})\}$ of $\mu$,
	where $\mathrm{cl}(A)$ denotes topological closure of a set $A$. For fixed $t\in\tdomain_{j}$, by mean value theorem, it can be shown that
	\begin{equation}
	\Delta_{t}(\hat{\mu}(t))U=\sum_{r=1}^{d}\sum_{i=1}^{n}\left(\mathcal{P}_{\gamma_{t,\hat{\mu}(t)}(\theta_{t}^{r,j}),\mu(t)}\nabla_{U}W_{t,i}^{r,j}(\gamma_{t,\hat{\mu}(t)}(\theta_{t}^{r,j}))-\nabla_{U}W_{t,i}^{r,j}(\mu(t))\right)\label{eq:mean-pf-1}
	\end{equation}
	for $\theta_t^{r,j}\in [0,1]$ and $W_{t,i}^{r,j}=\metric{V_{t,i}}{e_{t}^{r,j}}e_{t}^{r,j}$, where $e_{t}^{1,j},\ldots,e_{t}^{d,j}$
	is the orthonormal frame extended by parallel transport of $b_{j,1}(\mu(t)),\ldots,b_{j,d}(\mu(t))$
	along minimizing geodesic. 
	
	Take $\epsilon=\epsilon_{n}\downarrow0$ as $n\rightarrow\infty$.
	For each $j$, by the same argument of Lemma 3 of \cite{Kendall2011}, together with
	continuity of $\mu$ and the continuity of the frame $b_{j,1},\ldots,b_{j,d}$, we can
	choose a continuous positive $\rho_{t}^{j}$ such that, $\hat{\mu}(t)\in B(\mu(t),\rho_{t}^{j})$ and for $p\in B(\mu(t),\rho_{t}^{j})$
	where $B(q,\rho)$ denotes the ball on $\manifold$ centered at $q$
	with radius $\rho$,
	\begin{alignat*}{1}
	& \|\mathcal{P}_{p,\mu(t)}\nabla W_{t,i}^{r,j}(p)-\nabla W_{t,i}^{r,j}(\mu(t))\| \\
	 & \leq(1+2\epsilon\rho_{t}^{j})\sup_{q\in B(\mu(t),\rho_{t}^{j})}\|\mathcal{P}_{q,\mu(t)}\nabla V_{t,i}(q)-\nabla V_{t,i}(\mu(t))\| \\ & +2\epsilon(\|V_{t,i}(\mu(t))\|+\rho_{t}^{j}\|\nabla V_{t,i}(\mu(t))\|).
	\end{alignat*}
	In the above, $p$ plays a role of $\gamma_{t,\hat{\mu}(t)}(\theta_{t}^{r,j})$
	in (\ref{eq:mean-pf-1}). Let $\rho^{j}=\max\{\rho_{t}:t\in\mathrm{cl}(\tdomain_{j})\}$
	and $\rho_{\max}=\max_{j}\rho^{j}$. 
	We then have
	\begin{alignat}{1}
	\sup_{t\in\tdomain}\|\Delta_{t}(\hat{\mu}(t))\| & \leq\max_{j}\sup_{t\in\tdomain_{j}}\|\Delta_{t}(\hat{\mu}(t))\| \nonumber\\
	& =\sum_{r=1}^d\sum_{i=1}^{n}\max_{j}\sup_{t\in\tdomain_{j}}\|\mathcal{P}_{\gamma_{t,\hat{\mu}(t)}(\theta_t^{r,j}),\mu(t)}\nabla_{U}W_{r,i}^{t,j}(\gamma_{t,\hat{\mu}(t)}(\theta_t^{r,j}))-\nabla_{U}W_{r,i}^{t,j}(\mu(t))\|\nonumber\\
	& \leq d(1+2\epsilon\rho_{\max})m\sum_{i=1}^{n}\sup_{t\in\tdomain}\sup_{q\in B(\mu(t),\rho_{\max})}\|\mathcal{P}_{q,\mu(t)}\nabla V_{t,i}(q)-\nabla V_{t,i}(\mu(t))\|\label{eq:summand-1}\\
	& +2d\epsilon\sum_{i=1}^{n}\sup_{t\in\tdomain}\|V_{t,i}(\mu(t))\|\label{eq:summand-2} +2d\epsilon\rho_{\max}\sum_{i=1}^{n}\sup_{t\in\tdomain}\|\nabla V_{t,i}(\mu(t))\|.
	\end{alignat}
	For \eqref{eq:summand-1}, the Lipschitz condition of \ref{cond:Lipschitz-moments}
	and smoothness of $d_{\manifold}$ imply that 
	\begin{alignat*}{1}
	& \lim_{\rho\downarrow0}\expect\sup_{q\in B(\mu(t),\rho)}\|\mathcal{P}_{q,\mu(t)}\nabla V_{t,i}(q)-\nabla V_{t,i}(\mu(t))\| \\
	& =\lim_{\rho\downarrow0}\expect\sup_{q\in B(\mu(t),\rho)}\|\mathcal{P}_{q,\mu(t)}H_{t}(q)-H_{t}(\mu(t))\|=0.
	\end{alignat*}
	As $\sup_{t\in\tdomain}d_{\manifold}(\hat{\mu}(t),\mu(t))=o_{a.s.}(1)$,
	$\rho_{\max}$ could be chosen so that $\rho_{\max}\downarrow0$ as
	$n\rightarrow\infty$. Thus, with probability tending to one, by Markov
	inequality, we deduce that
	\begin{equation}
	(1+2\epsilon\rho_{\max})m\frac{1}{n}\sum_{i=1}^{n}\sup_{t\in\tdomain_{t}}\sup_{q\in B(\mu(t),\rho_{\max})}\|\mathcal{P}_{q,\mu(t)}\nabla V_{t,i}(q)-\nabla V_{t,i}(\mu(t))\|=\op(1).\label{eq:mean-pf-2}
	\end{equation}
	For the first term  in \eqref{eq:summand-2}, LLN shows that 
	\begin{alignat*}{1}
	\frac{1}{n}\sum_{i=1}^{n}\sup_{t\in\tdomain}\|V_{t,i}(\mu(t))\| & \overset{P}{\rightarrow}\expect\sup_{t\in\tdomain}\|V_{t,i}(\mu(t))\|=\expect\sup_{t\in\tdomain}d_{\manifold}(X(t),\mu(t))<\infty,
	\end{alignat*}
	or 
	\begin{equation}
	\frac{1}{n}\sum_{i=1}^{n}\sup_{t\in\tdomain}\|V_{t,i}(\mu(t))\|=\Op(1).\label{eq:mean-pf-3}
	\end{equation}
	For the second term in \eqref{eq:summand-2}, the compactness of $\tdomain,$ the Lipschitz
	condition of \ref{cond:Lipschitz-moments} and smoothness of $d_{\manifold}$
	also imply that $\expect\sup_{t\in\tdomain}\|\nabla V_{t,i}(\mu(t))\|=\expect\sup_{t\in\tdomain}\|H_{t}(\mu(t))\|<\infty$.
	Consequently, by LLN, 
	\begin{equation}
	\frac{1}{n}\sum_{i=1}^{n}\sup_{t\in\tdomain}\|\nabla V_{t,i}(\mu(t))\|=\Op(1).\label{eq:mean-pf-4}
	\end{equation}
	Combining (\ref{eq:mean-pf-2}), (\ref{eq:mean-pf-3}) and (\ref{eq:mean-pf-4}), with $\epsilon=\epsilon_n\downarrow0$, 
	one concludes that 
	$\sup_{t\in\tdomain}n^{-1}\|\Delta_{t}(p)\|=\op(1)$.
\end{proof}

\begin{lem}
	\label{lem:sup-sup-F-n-F}Suppose {conditions \ref{cond:exist-mean-function}
		and \ref{cond:manifold-property}}-\ref{cond:uniform-second-moment}
	hold. For any compact subset $\mathcal{K}\subset\manifold$, one has
	\[
	\sup_{p\in\mathcal{K}}\sup_{t\in\tdomain}|F_{n}(p,t)-F(p,t)|=\oas(1).
	\]
\end{lem}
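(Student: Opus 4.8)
The plan is to recognize the statement as a uniform strong law of large numbers (SLLN) over the compact index set $\Theta:=\mathcal{K}\times\tdomain$, since $\mathcal{K}$ is compact by hypothesis and $\tdomain$ is a compact subset of $\real$. Writing $g_i(p,t):=d_{\manifold}^2(X_i(t),p)$, I would prove that $n^{-1}\sum_{i=1}^n g_i(p,t)\to\expect g_1(p,t)=F(p,t)$ almost surely, uniformly in $(p,t)\in\Theta$. The standard route combines three ingredients: a pointwise SLLN at each fixed $(p,t)$; continuity (and in fact equicontinuity) of the integrand in $(p,t)$; and a finite covering of the compact set $\Theta$ that converts the pointwise statements into a uniform one.

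First I would fix $(p,t)$ and apply the classical SLLN to the i.i.d.\ real variables $\{g_i(p,t)\}_i$: these have finite mean $F(p,t)<\infty$ by \ref{cond:uniform-second-moment}, giving $n^{-1}\sum_i g_i(p,t)\to F(p,t)$ a.s. Next, fixing a reference point $p_0\in\mathcal{K}$ and writing $D:=\mathrm{diam}(\mathcal{K})<\infty$, the elementary bound $\sup_{p\in\mathcal{K}}g_i(p,t)\le 2 d_{\manifold}^2(X_i(t),p_0)+2D^2$ together with \ref{cond:uniform-second-moment} shows $\sup_{t\in\tdomain}\expect\sup_{p\in\mathcal{K}}g_1(p,t)<\infty$, which already yields, for each fixed $t$, a uniform-in-$p$ SLLN on the compact set $\mathcal{K}$ and the continuity of $p\mapsto F(p,t)$. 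Sample-path continuity \ref{cond:sample-path-continuity} makes $(p,t)\mapsto g_i(p,t)$ continuous on $\Theta$ for almost every path, whence $F$ is jointly continuous on the compact set $\Theta$.

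It remains to upgrade the uniformity to the joint index $(p,t)$, and the equicontinuity in the time direction is the main obstacle. The key estimate is the Lipschitz-type inequality $|d_{\manifold}^2(x,p)-d_{\manifold}^2(y,q)|\le \{d_{\manifold}(x,y)+d_{\manifold}(p,q)\}\{d_{\manifold}(x,p)+d_{\manifold}(y,q)+d_{\manifold}(p,q)\}$, which bounds the oscillation of $g_i$ over a small ball around $(p_0,t_0)$ by $d_{\manifold}(X_i(s),X_i(t))$ and $d_{\manifold}(p,p_0)$ multiplied by distances that are square-integrable uniformly in the index by \ref{cond:uniform-second-moment}. A Cauchy--Schwarz split then reduces the expected oscillation to $\sqrt{\expect d_{\manifold}^2(X(s),X(t))}$ times a uniformly bounded factor; by \ref{cond:sample-path-continuity} the first factor tends to $0$ as $s\to t$, and this is exactly the delicate point, requiring a dominated-convergence/uniform-integrability argument so that almost-sure path continuity transfers to convergence in $L^1$. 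Granting this, I would cover $\Theta$ by finitely many balls on which the expected oscillation of $g_1$ is below $\epsilon$, apply the single-index SLLN to the finitely many bracketing functions (the $\sup$ and $\inf$ of $g_1$ over each ball) and to $F$ at the ball centers, and let $\epsilon\downarrow 0$ along a sequence to conclude $\sup_{p\in\mathcal{K}}\sup_{t\in\tdomain}|F_n(p,t)-F(p,t)|=\oas(1)$. The hardest part is thus controlling the time-direction oscillation uniformly, that is, manufacturing an integrable local envelope for the supremum over nearby $t$ out of the merely pointwise-in-$t$ second-moment bound \ref{cond:uniform-second-moment} together with the sample-path continuity \ref{cond:sample-path-continuity}.
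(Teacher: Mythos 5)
Your overall architecture (pointwise LLN, plus continuity, plus a finite covering of a compact index set) is the right one and overlaps substantially with the paper's, and your treatment of the $p$-direction --- the envelope $2d_{\manifold}^2(X_i(t),p_0)+2\,\mathrm{diam}(\mathcal{K})^2$ and the Lipschitz-type inequality for $d_{\manifold}^2$ --- is essentially what the paper does. The genuine divergence is in the $t$-direction. The paper never brackets in $t$: it builds a finite $\delta$-net $\{p_1,\dots,p_r\}$ in $\mathcal{K}$ only, controls the oscillation of $F_{n}(\cdot,t)$ and $F(\cdot,t)$ in $p$ uniformly over $t$ (using the bound $|d_{\manifold}^{2}(x,p)-d_{\manifold}^{2}(x,q)|\leq\{d_{\manifold}(x,p)+d_{\manifold}(x,q)\}d_{\manifold}(p,q)$ together with the fact that $\sup_{p\in\mathcal{K}}\sup_{t\in\tdomain}n^{-1}\sum_{i}d_{\manifold}(X_{i}(t),p)$ is eventually bounded almost surely), and then invokes a uniform SLLN over $t\in\tdomain$ for the continuous-path processes $t\mapsto n^{-1}\sum_{i}d_{\manifold}^{2}(X_{i}(t),p_{j})$ at each of the finitely many net points. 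You instead cover $\mathcal{K}\times\tdomain$ by small balls and apply the scalar SLLN to the two-sided brackets $\sup_{B}g_{1}$ and $\inf_{B}g_{1}$.

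That choice is exactly where your proof has a gap, and you name it yourself but do not close it. For the upper brackets to be usable you need $\expect\sup_{(p,s)\in B}d_{\manifold}^{2}(X(s),p)<\infty$, equivalently $\expect\sup_{s\in B_{t}}d_{\manifold}^{2}(X(s),p_{0})<\infty$ for small time-intervals $B_{t}$: this is required both for the scalar SLLN to apply to $\sup_{B}g_{1}$ at all, and for the dominated-convergence step that sends the expected local oscillation to zero as the balls shrink. Condition \ref{cond:uniform-second-moment} only bounds $\sup_{t}\expect d_{\manifold}^{2}(p,X(t))$, with the supremum \emph{outside} the expectation, and condition \ref{cond:sample-path-continuity} gives almost-sure convergence of the local oscillation to zero but supplies no uniform integrability; neither produces the required local envelope, so the ``dominated-convergence/uniform-integrability argument'' you defer is not a routine fill. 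Since you proceed with ``granting this,'' the argument is incomplete at precisely the step you identify as hardest. To follow the paper's route instead, reduce to finitely many spatial points via the Lipschitz bound above and handle the $t$-uniformity at those finitely many points by a function-space (uniform over $\tdomain$) SLLN rather than by bracketing in time.
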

\begin{proof}
	By applying the uniform SLLN to $n^{-1}\sum_{i=1}^{n}d_{\manifold}(X_{i}(t),p_{0})$,
	{} for a given $p_{0}\in\mathcal{K}$,
	\begin{align*}
	\sup_{p\in\mathcal{K}}\sup_{t\in\tdomain}\frac{1}{n}\sum_{i=1}^{n}d_{\manifold}(X_{i}(t),p) & \leq\sup_{t\in\tdomain}\frac{1}{n}\sum_{i=1}^{n}d_{\manifold}(X_{i}(t),p_{0})+\sup_{p\in\mathcal{K}}d_{\manifold}(p_{0},p)\\
	& \leq\sup_{t\in\tdomain}\expect d_{\manifold}(X(t),p_{0})+\mathrm{diam}(\mathcal{K})+\oas(1).
	\end{align*}
	Therefore, there exists a set $\Omega_{1}\subset\Omega$ such that
	$\prob(\Omega_{1})=1$, $N_{1}(\omega)<\infty$ and for all $n\geq N_{1}(\omega)$,
	\[
	\sup_{p\in\mathcal{K}}\sup_{t\in\tdomain}\frac{1}{n}\sum_{i=1}^{n}d_{\manifold}(X_{i}(t),p)\leq\sup_{t\in\tdomain}\expect d_{\manifold}(X(t),p_{0})+\mathrm{diam}(\mathcal{K})+1\define c_{1}<\infty,
	\]
	since {$\sup_{t\in\tdomain}\expect d_{\manifold}(X(t),p_{0})<\infty$}
	by condition \ref{cond:uniform-second-moment}. Fix $\epsilon>0$.
	By the inequality $|d_{\manifold}^{2}(x,p)-d_{\manifold}^{2}(x,q)|\leq\{d_{\manifold}(x,p)+d_{\manifold}(x,q)\}d_{\manifold}(p,q)$,
	for all $n\geq N_{1}(\omega)$ and $\omega\in\Omega_{1}$,
	\[
	\sup_{p,q\in\mathcal{K}:d_{\manifold}(p,q)<\delta_{1}}\sup_{t\in\tdomain}|F_{n,\omega}(p,t)-F_{n,\omega}(q,t)|\leq2c_{1}\delta_{1}=\epsilon/3
	\]
	with $\delta_{1}\define\epsilon/(6c_{1})$. Now, let $\delta_{2}>0$
	be chosen such that $\sup_{t\in\tdomain}|F(p,t)-F(q,t)|<\epsilon/3$ if $p,q\in\mathcal{K}$
	and $d_{\manifold}(p,q)<\delta_{2}$. %
	{} Suppose $\{p_{1},\ldots,p_{r}\}\subset\mathcal{K}$ is a $\delta$-net
	in $\mathcal{K}$ %
	{} with $\delta\define\min\{\delta_{1},\delta_{2}\}$. Applying uniform
	SLLN again, there exists a set $\Omega_{2}$ such that $\prob(\Omega_{2})=1$,
	$N_{2}(\omega)<\infty$ for all $\omega\in\Omega_{2}$, and 
	\[
	\max_{j=1,\ldots,r}\sup_{t\in\tdomain}|F_{n,\omega}(p_{j},t)-F(p_{j},t)|<\epsilon/3
	\]
	for all $n\geq N_{2}(\omega)$ with $\omega\in\Omega_{2}$. Then,
	for all $\omega\in\Omega_{1}\cap\Omega_{2}$, for all $n\geq\max\{N_{1}(\omega),N_{2}(\omega)\}$,
	we have
	\begin{align*}
	& \sup_{p\in\mathcal{K}}\sup_{t\in\tdomain}|F_{n,\omega}(p,t)-F(p,t)|\\
	& \leq\sup_{p\in\mathcal{K}}\sup_{t\in\tdomain}|F_{n,\omega}(p)-F_{n,\omega}(u_{p})|+\sup_{p\in\mathcal{K}}\sup_{t\in\tdomain}|F_{n,\omega}(u_{p},t)-F(u_{p},t)|+\sup_{p\in\mathcal{K}}\sup_{t\in\tdomain}|F(u_{p},t)-F(p,t)|\\
	& <\epsilon/3+\epsilon/3+\epsilon/3=\epsilon,
	\end{align*}and this concludes the proof.
\end{proof}
\begin{lem}
	\label{lem:neighbor-tube-mu-hat}{Assume conditions
		\ref{cond:exist-mean-function} and \ref{cond:manifold-property}}-\ref{cond:separability}
	hold. Given any $\epsilon>0$, there exists $\Omega^{\prime}\subset\Omega$
	such that $\prob(\Omega^{\prime})=1$ and for all $\omega\in\Omega^{\prime}$,
	$N(\omega)<\infty$ and for all $n\geq N(\omega)$, $\sup_{t\in\tdomain}d_{\manifold}(\hat{\mu}_{\omega}(t),\mu(t))<\epsilon$.
\end{lem}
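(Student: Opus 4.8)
The plan is to run the classical argmin-consistency scheme for $M$-estimators, but made uniform over $t \in \tdomain$ and strengthened to an almost sure statement. There are three stages: (i) confine $\hat\mu(t)$ to a fixed compact set simultaneously for all $t$, with probability one for all large $n$; (ii) transfer the uniform strong law of Lemma \ref{lem:sup-sup-F-n-F} onto that compact set; and (iii) use the uniform separation condition \ref{cond:separability} to pass from closeness of the objective values to closeness of the minimizers.

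For stage (i), I would fix a reference point $o \in \manifold$ and use condition \ref{cond:uniform-second-moment} (with the compact set $\{o\}$) to get $M := \sup_{t\in\tdomain}\expect d_\manifold^2(X(t), o) < \infty$, so that Lemma \ref{lem:sup-sup-F-n-F} gives $\sup_{t\in\tdomain} F_n(o, t) \leq M + 1$ for all large $n$, almost surely. The reverse triangle inequality together with $(a-b)^2 \geq \tfrac12 a^2 - b^2$ yields the quadratic lower bound
\[
F_n(p, t) \;\geq\; \tfrac12 d_\manifold^2(p, o) - F_n(o, t) \qquad \text{for all } p \in \manifold,\ t \in \tdomain.
\]
Since $F_n(\hat\mu(t), t) \leq F_n(o, t)$ by definition of $\hat\mu$, any minimizer must satisfy $d_\manifold(\hat\mu(t), o) \leq 2\sqrt{M+1} =: R$, so $\hat\mu(t)$ stays in the closed ball $\bar B(o, R)$ for every $t$. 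By condition \ref{cond:manifold-property} and the Hopf--Rinow theorem this closed bounded set is compact, which is the crucial point: the radius $R$ is independent of $t$.

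Next, for stages (ii) and (iii), I would take a compact set $\mathcal{K}$ containing both $\bar B(o, R)$ and the closure of the image $\mathcal{U}$ of $\mu$ (bounded by \ref{cond:boundedness-mean}, hence with compact closure by completeness), so that $\mu(t), \hat\mu(t) \in \mathcal{K}$ for all $t$ eventually. Then $\Delta_n := \sup_{p \in \mathcal{K}}\sup_{t\in\tdomain}|F_n(p,t) - F(p,t)| = \oas(1)$ by Lemma \ref{lem:sup-sup-F-n-F}. Fixing $\epsilon > 0$ and setting $\delta := \inf_{t\in\tdomain}\inf_{p: d_\manifold(p,\mu(t)) \geq \epsilon}\{F(p,t) - F(\mu(t),t)\} > 0$ via \ref{cond:separability}, on the event $\{d_\manifold(\hat\mu(t),\mu(t)) \geq \epsilon\}$ I would split
\[
F(\hat\mu(t),t) - F(\mu(t),t) = \{F - F_n\}(\hat\mu(t),t) + \{F_n(\hat\mu(t),t) - F_n(\mu(t),t)\} + \{F_n - F\}(\mu(t),t),
\]
bound the middle bracket by $0$ (optimality of $\hat\mu$) and the outer two by $\Delta_n$ each, to obtain $\delta \leq 2\Delta_n$. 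Since $\Delta_n = \oas(1)$, this is impossible for large $n$ almost surely, forcing $\sup_{t\in\tdomain} d_\manifold(\hat\mu(t),\mu(t)) < \epsilon$, which is the claim.

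The main obstacle is stage (i): ruling out escaping minimizers \emph{uniformly} in $t$ when $\manifold$ is noncompact. The uniform-in-$t$ second-moment bound of condition \ref{cond:uniform-second-moment} is exactly what guarantees that the confining radius $R$ can be chosen free of $t$; without it the compact set in stage (ii) would have to vary with $t$ and the uniform strong law could not be applied. Once the common compact set is secured, the remaining separation argument is routine, as is the bookkeeping of intersecting the finitely many almost-sure events.
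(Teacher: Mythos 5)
Your proposal is correct and follows essentially the same route as the paper's proof: confine the minimizers to a fixed compact set via a reverse-triangle-inequality lower bound on the objective, invoke the uniform strong law of Lemma \ref{lem:sup-sup-F-n-F} on that compact set, and conclude with the separation condition \ref{cond:separability}. The only cosmetic difference is in the confinement step, where you bound $d_{\manifold}(\hat{\mu}(t),o)$ directly from a fixed reference point while the paper shows $F$ and $F_n$ exceed $c(t)+1$ outside a tube around the image of $\mu$; both rest on the same inequality and yield a $t$-free compact set.
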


\begin{proof}
	Let $c(t)=F(\mu(t),t)=\min\{F(p,t):p\in\manifold\}$ and $\mathcal{N}(t)\define\{p:d_{\manifold}(p,\mu(t))\geq\epsilon\}$.
	It is sufficient to show that there exists $\delta>0$ and $N(\omega)<\infty$
	for all $\omega\in\Omega^{\prime}$, such that for all $n\ge N(\omega)$,
	\begin{equation*}
	\sup_{t\in\tdomain}\{F_{n,\omega}(\mu(t),t)-c(t)\}\leq\delta/2\,\,\,\,\,\,\,\text{and}\,\,\,\,\,\,\,\,\inf_{t\in\tdomain}\{\inf_{p\in\mathcal{N}(t)}F_{n,\omega}(p,t)-c(t)\}\geq\delta.
	\end{equation*}
	This is because the above two inequalities suggest that for all $t\in \tdomain$, $\inf\{F_{n,\omega}(p,t):p\in\manifold\}$
	is not attained at $p$ with $d_{\manifold}(p,\mu(t))\geq\epsilon$,
	and hence $\sup_{t\in \tdomain}d_{\manifold}(\hat{\mu}_{\omega}(t),\mu(t))<\epsilon$.
	
	Let $\mathcal{U}=\{\mu(t):t\in\tdomain\}$. We first show that there
	exists a compact set $\mathcal{A}\supset\mathcal{U}$ and $N_{1}(\omega)<\infty$
	for some $\Omega_{1}\subset\Omega$ such that $\prob(\Omega_{1})=1$,
	and both $F(p,t)$ and $F_{n,\omega}(p,t)$ are greater than $c(t)+1$
	for all $p\in\manifold\backslash\mathcal{A}$ , $t\in \tdomain$ and $n\geq N_{1}(\omega)$.
	This is trivially true when $\mathcal{M}$ is compact, by taking $\mathcal{A}=\manifold$. Now assume
	$\mathcal{M}$ is noncompact. By the inequality $d_{\manifold}(x,q)\geq|d_{\manifold}(q,y)-d_{\manifold}(y,x)|$,
	one has
	\[
	\expect d_{\manifold}^{2}(X(t),q)\geq\expect\{d_{\manifold}^{2}(q,\mu(t))+d_{\manifold}^{2}(X(t),\mu(t))-2d_{\manifold}(q,\mu(t))d_{\manifold}(X(t),\mu(t))\},
	\]
	and by Cauchy--Schwarz inequality,
	\[
	F(q,t)\geq d_{\manifold}^{2}(q,\mu(t))+F(\mu(t),t)-2d_{\manifold}(q,\mu(t))\{F(\mu(t),t)\}^{1/2}.
	\]
	Similarly, 
	\[
	F_{n,\omega}(q,t)\geq d_{\manifold}^{2}(q,\mu(t))+F_{n,\omega}(\mu(t),t)-2d_{\manifold}(q,\mu(t))\{F_{n,\omega}(\mu(t),t)\}^{1/2}.
	\]
	Now, we take $q$ at a sufficiently large distance $\Delta$ from $\mathcal{U}$
	such that $F(q,t)>c(t)+1$ on $\manifold\backslash\mathcal{A}$ for
	all $t$, where $\mathcal{A}\define\overline{\{q:d_{\manifold}(q,\mathcal{U})\leq\Delta\}}$
	(Heine--Borel property yields compactness of $\mathcal{A}$, since
	it is bounded and closed). Since $F_{n,\omega}(\mu(t),t)$ converges
	to $F(\mu(t),t)$ uniformly on $\tdomain$ a.s. by Lemma \ref{lem:sup-sup-F-n-F},
	we can find a set $\Omega_{1}\subset\Omega$ such that $\prob(\Omega_{1})=1$
	and $N_{1}(\omega)<\infty$ for $\omega\in\Omega_{1}$, and $F_{n,\omega}(q,t)>c(t)+1$
	on $\manifold\backslash\mathcal{A}$ for all $t$ and $n\geq N_{1}(\omega)$.
	
	Finally, let $\mathcal{A}_{\epsilon}(t)\define\{p\in\mathcal{A}:d_{\manifold}(p,\mu(t))\geq\epsilon\}$
	and $c_{\epsilon}(t)\define\min\{F(p,t):p\in\mathcal{A}_{\epsilon}\}.$
	Then $\mathcal{A}_{\epsilon}(t)$ is compact and by condition \ref{cond:separability},
	$\inf_{t}\{c_{\epsilon}(t)-c(t)\}>2\delta>0$ for some constant $\delta$.
	By Lemma \ref{lem:sup-sup-F-n-F}, one can find a set $\Omega_{2}\subset\Omega$
	with $\prob(\Omega_{2})=1$ and $N_{2}(\omega)<\infty$ for $\omega\in\Omega_{2}$,
	such that for all $n\geq N_{2}(\omega)$, (i) $\sup_{t}\{F_{n,\omega}(\mu(t),t)-c(t)\}\leq\delta/2$
	and (ii) $\inf_{t}\inf_{p\in\mathcal{A}_{\epsilon}(t)}\{F_{n,\omega}(p,t)-c(t)\}>\delta$.
	Since $\sup_{t}\{F_{n,\omega}(p,t)-c(t)\}>1$ on $\manifold\backslash\mathcal{A}$
	for all $n\geq N_{1}(\omega)$ with $\omega\in\Omega_{1}$, we conclude
	that $\inf_{t}\{F_{n,\omega}(p,t)-c(t)\}>\min\{\delta,1\}$ for all
	$p\in\mathcal{A}_{\epsilon}\cup(\manifold\backslash\mathcal{A})$
	if $n\geq\max\{N_{1}(\omega),N_{2}(\omega)\}$ for $\omega\in\Omega_{1}\cap\Omega_{2}$. The proof is completed by noting that $\Omega_{1}\cap\Omega_{2}$ can serve the $\Omega^\prime$ we are looking for.
\end{proof}

\references

\end{document}